 \newtheorem{thm}{Theorem}[section]
 \newtheorem{prob}{Problem}[section]
 \newtheorem{cor}[thm]{Corollary}
 \newtheorem{lem}[thm]{Lemma}
 \newtheorem{prop}[thm]{Proposition}
 \theoremstyle{definition}
 \newtheorem{defn}[thm]{Definition}
 \newtheorem*{ack}{Acknowledgments}
 \theoremstyle{remark}
 \newtheorem{rem}[thm]{Remark}
 \newtheorem{ex}{Example}
 \numberwithin{equation}{section}
 \numberwithin{figure}{section}
\renewcommand{\(}{\left(}
\renewcommand{\)}{\right)}
\begin{document}
\title[The discrete horospherical $p$-Minkowski problem in hyperbolic space]{The discrete horospherical $p$-Minkowski problem in hyperbolic space}

\author{Haizhong Li}
\address{Department of Mathematical Sciences, Tsinghua University, Beijing 100084, P.R. China}
\email{\href{mailto:lihz@tsinghua.edu.cn}{lihz@tsinghua.edu.cn}}

\author{Yao Wan}
\address{Department of Mathematical Sciences, Tsinghua University, Beijing 100084, P.R. China}
\email{\href{mailto:y-wan19@mails.tsinghua.edu.cn}{y-wan19@mails.tsinghua.edu.cn}}

\author{Botong Xu}
\address{Department of Mathematics, Technion--Israel Institute of Technology, Haifa 32000, Israel} 
\email{\href{mailto:botongxu@campus.technion.ac.il}{botongxu@campus.technion.ac.il}}

\keywords{Horospherical $p$-Minkowski problem, discrete measure, h-convex polytope, hyperbolic space}
\subjclass[2020]{52A55; 52A20}


\begin{abstract}
In \cite{LX}, the first author and the third author introduced and studied the horospherical $p$-Minkowski problem for smooth horospherically convex domains in hyperbolic space. In this paper, we introduce and solve the discrete horospherical $p$-Minkowski problem in hyperbolic space for all $p\in(-\infty,+\infty)$ when the given measure is even on the unit sphere.
\end{abstract}

\maketitle

\section{Introduction}\label{sec:1}
A central problem in the Brunn-Minkowski theory in Euclidean space $\mathbb{R}^{n+1}$ is the Minkowski problem which asks if a given Borel measure $\mu$ on the unit sphere $\mathbb{S}^{n}$ arises as the surface area measure of a convex body. The existence of the solution to this problem for Borel measures was given by Alexandrov \cite{Ale42} and independently by Fenchel and Jessen \cite{FJ38}. The solution is unique up to translation, and its regularity was studied by Lewy \cite{lewy38}, Nirenberg \cite{Nir53}, Cheng-Yau \cite{CY76}, Pogorelov \cite{Pog78}, and Caffarelli \cite{Caf90}. When the given measure $\mu$ is discrete, the following discrete Minkowski problem can be understood as prescribing the surface areas of facets of a polytope, and the problem was solved by Minkowski \cites{Min1897, Min1903} himself.

\textbf{Discrete Minkowski problem in Euclidean space.}\ 
\textit{Let $\mu$ be a discrete measure on $\mathbb{S}^n$. Find necessary and sufficient conditions on $\mu$ so that there exists a convex polytope $P$ in $\mathbb{R}^{n+1}$ whose surface area measure with respect to the Gauss map is the given measure $\mu$. }

It is a natural question to ask how to propose the extension of the above prescribed discrete surface area measure problem in hyperbolic space. In this paper, we substitute the convexity in Euclidean space by the horospherical convexity (or h-convexity for short) in hyperbolic space, which is widely studied in hyperbolic geometry. The polytopes in Euclidean space are given by the intersections of finite half spaces. Similarly, we call a closed domain in the hyperbolic space $\mathbb{H}^{n+1}$ an \emph{h-convex polytope} if it is given by the intersection of finite closed horo-balls (see section \ref{sec:2} for more details). Now we ask the following problem, which can be understood as prescribing the surface areas of facets of an h-convex polytope.

\begin{prob}[\textbf{Prescribed discrete horospherical surface area measure problem in hyperbolic space}]\label{Problem 1.1}
Let $\mu$ be a discrete measure on $\mathbb{S}^n$. Find necessary and sufficient conditions on $\mu$ so that there exists a h-convex polytope $P$ in $\mathbb{H}^{n+1}$ whose horospherical surface area measure with respect to the horospherical Gauss map is a multiple of the given measure $\mu$.
\end{prob}

In the Poincar\'e ball model $(\mathbb{B}^{n+1}, g_B)$ of $\mathbb{H}^{n+1}$, the horospheres are spheres tangent to $\partial \mathbb{B}^{n+1}$, and the horo-balls are domains delimited by horospheres. Hence the following Figure \ref{Fig-1} gives a perception of the relationship between the discrete Minkowski problem in $\mathbb{R}^2$ and Problem \ref{Problem 1.1} in $\mathbb{H}^2$.

\begin{figure}[htbp]
\centering
\begin{tikzpicture}[scale=0.4][>=Stealth] 
\filldraw[opacity=0.6, draw=blue!70, fill=blue!100] (-2,0)--(1,3)--(1,-3)--cycle;

\fill (0,0) circle (2pt);
\node[right] at (-0.3,-1) {{\footnotesize{$P$}}};
\draw[-stealth] (1,0)--(1.5,0);
\node[above right] at (1.3,0) {{\footnotesize{$\Vec{e}_1$}}};
\draw[-stealth] (-1,1)--(-1.3,1.3);
\node[above] at (-1.3,1.3) {{\footnotesize{$\Vec{e}_2$}}};
\draw[-stealth] (-1,-1)--(-1.3,-1.3);
\node[left] at (-1.3,-1.3) {{\footnotesize{$\Vec{e}_3$}}};
\node[left] at (4,-3) {{\footnotesize{$\mathbb{R}^{2}$}}};
\draw[-] (-4,-2)--(2,4);
\draw[-] (-4,2)--(2,-4);
\draw[-] (1,-4.5)--(1,4.5);
\end{tikzpicture}
\hspace{2.5em}
\begin{tikzpicture}[scale=0.4][>=Stealth] 
\filldraw[opacity=0.6, draw=blue!70, fill=blue!100] (1.6,-1.24)arc(-1:84:2.5);
\filldraw[opacity=0.6, draw=blue!70, fill=blue!100] (0.15,-2.104)arc(-75.46:-42.84:3);
\filldraw[opacity=0.6, draw=blue!70, fill=blue!100]  (-0.643,1.287)arc(149:237.3:2.5);
\filldraw[opacity=0.6, draw=blue!70, fill=blue!100] (-0.643,1.287)--(0.15,-2.104)--(1.6,-1.24)--cycle;

\draw (0,0) circle (4);
\fill (0,0) circle (2pt);
\node[right] at (-0.3,-1) {{\footnotesize{$P$}}};
\draw[-stealth] (4,0)--(4.5,0);
\node[above right] at (4,0) {{\footnotesize{$\Vec{e}_1$}}};
\draw[-stealth] (-2.4,3.2)--(-2.7,3.6);
\node[above] at (-2.3,3.3) {{\footnotesize{$\Vec{e}_2$}}};
\draw[-stealth] (-2.4,-3.2)--(-2.7,-3.6);
\node[left] at (-2.5,-3.2) {{\footnotesize{$\Vec{e}_3$}}};
\node[left] at (6,-3) {{\footnotesize{$\mathbb{B}^{2}$}}};
\draw (1.5,0) circle (2.5);
\draw (-0.6,0.8) circle (3);
\draw (-0.9,-1.2) circle (2.5);
\end{tikzpicture}
\caption{Discrete Minkowski problem in $\mathbb{R}^{2}$ and Problem \ref{Problem 1.1} in $\mathbb{H}^{2}$}
 \label{Fig-1}
\end{figure}
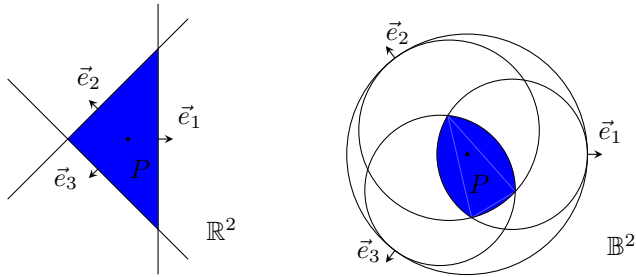

$\ $

The $L_p$ Minkowski problem in Euclidean space $\mathbb{R}^{n+1}$ was posted by Lutwak \cite{lut93}, where the case $p=1$ is the classical Minkowski problem. The $L_p$ Minkowski problem asks to characterize the $L_p$ surface area measure of a convex body, and it contains the logarithmic Minkowski problem ($p=0$) and the centro-affine Minkowski problem ($p=-n-1$) as special cases. The $L_p$ Minkowski problem has been extensively studied in recent decades, see e.g. \cites{lut93, lut04, Chen06, Lu13, CW06, BLY13, BT17, JLZ16}; see also a recent survey by B\"{o}r\"{o}czky \cites{Bor22}.

The $L_p$ Minkowski problem  for polytopes are of great importance. One reason is that the $L_p$ Minkowski problem ($p\ge 1$) for Borel measures can be solved by an approximation argument by first solving the polytopal case, see e.g. \cites{HLYZ, sch14}. Note that if a polytope $P$ contains the origin in its interior with $N$ facets whose outward unit normals are $\{\textbf{e}_1,\ldots,\textbf{e}_N\}\subset\mathbb{S}^n$, and if the facet with outward unit normal $\textbf{e}_i$ has area $a_i$ and distance from the origin $u(P,\textbf{e}_i)$ for $i=1,\ldots,N$. Then the $L_p$ surface area measure of $P$ is defined by
$$S_p(P,\cdot)=\sum\limits_{i=1}^N u(P,\textbf{e}_i)^{1-p}a_i\delta_{\textbf{e}_i}(\cdot),$$
where $\delta_{\textbf{e}_i}$ denotes the delta measure concentrated at the point $\textbf{e}_i$ on $\mathbb{S}^n$. For a given discrete measure $\mu$, the $L_p$ Minkowski problem can be stated in the following way:

\textbf{Discrete $L_p$ Minkowski problem in Euclidean space.}\ 
\textit{Let $\mu$ be a discrete measure on $\mathbb{S}^n$. Find necessary and sufficient conditions on $\mu$ so that there exists a convex polytope $P$ in $\mathbb{R}^{n+1}$ whose $L_p$ surface area measure $S_p(P,\cdot)$ is the given measure $\mu$.}

The discrete $L_p$ Minkowski problem in Euclidean space was treated by, e.g., Hug, Lutwak, Yang and Zhang \cite{HLYZ} for $p>1$, B\"{o}r\"{o}czky, Heged\H{u}s and Zhu \cite{Bor16} for $p=0$, and Zhu \cites{zhu1501, zhu1502, zhu14, zhu17} for $p<1$.

In \cite{LX}, the first author and the third author introduced the horospherical $p$-surface area measures of smooth uniformly h-convex bounded domains in hyperbolic space $\mathbb{H}^{n+1}$ and have solved the corresponding horospherical $p$-Minkowski problem when the given measure is even on $\mathbb{S}^n$. Here a Borel measure $\mu$ on $\mathbb{S}^n$ is said to be \textit{even} if $\mu(\omega)=\mu(-\omega)$ for any Borel subset $\omega\subset\mathbb{S}^n$. In \cite{LW}, the first author and the second author investigated the Christoffel problem in the hyperbolic plane (i.e. the horospherical $p$-Minkowski problem in the case $n=1$ and $p=-1$) and presented existence results without the evenness assumption on $\mu$. In the smooth category, the horospherical $p$-Minkowski problem in hyperbolic space can be stated in the following way:

\textbf{Horospherical $p$-Minkowski problem in hyperbolic space.}\ 
\textit{Let $\mu$ be a finite Borel measure on $\mathbb{S}^n$. Find necessary and sufficient conditions on $\mu$ so that there exists a h-convex body $K$ in $\mathbb{H}^{n+1}$ whose horospherical $p$-surface area measure $S_p(K,\cdot)$ with respect to the horospherical Gauss map is a multiple of the given measure $\mu$.}

The hyperboloid model of the hyperbolic space $\mathbb{H}^{n+1}$ in the Minkowski space $\mathbb{R}^{n+1,1}$ is given by $\mathbb{H}^{n+1}=\{X=(\textbf{x},x_{n+2})\in\mathbb{R}^{n+1,1}:\ X\cdot X=-1,\ x_{n+2}>0,\ \textbf{x}\in\mathbb{R}^{n+1}\}$. If a h-convex polytope $P$ in $\mathbb{H}^{n+1}$ contains the origin $O=(\textbf{0},1)$ in its interior and has $N$ facets with horospherical normals $\{\textbf{e}_1,\ldots,\textbf{e}_N\}\subset\mathbb{S}^n$, where each facet with horospherical normal $\textbf{e}_i$ has area $a_i$ and horospherical support function $u(P,\textbf{e}_i)$, then we define the horospherical $p$-surface area measure of $P$ by
$$S_p(P,\cdot)=\sum\limits_{i=1}^N e^{-pu(P,\textbf{e}_i)}a_i\delta_{\textbf{e}_i}(\cdot).$$
For a given discrete measure $\mu$, the horospherical $p$-Minkowski problem can be stated as follows:

\begin{prob}[\textbf{Discrete horospherical $p$-Minkowski problem in hyperbolic space}]\label{Problem 1.2}
Let $\mu$ be a discrete measure on $\mathbb{S}^n$. Find necessary and sufficient conditions on $\mu$ so that there exists a h-convex polytope $P$ in $\mathbb{H}^{n+1}$ whose horospherical $p$-surface area measure $S_p(P,\cdot)$ is a multiple of the given measure $\mu$.
\end{prob}

When $p=0$, Problem \ref{Problem 1.2} is reduced to Problem \ref{Problem 1.1}. When $p=-n$, we call it the \textit{discrete horospherical Minkowski problem}.

In this paper, we consider Problem \ref{Problem 1.2} in the case that $\mu$ is even on $\mathbb{S}^{n}$. Our main result is the following Theorem \ref{Thm1.1}.

\begin{thm}\label{Thm1.1}
Let $p\in\mathbb{R}$ and $\mu$ be an even discrete measure on $\mathbb{S}^n$. Then there exists an origin-symmetric h-convex polytope $P$ in $\mathbb{H}^{n+1}$, such that the measure $\mu$ is a multiple of its horospherical $p$-surface area measure $S_p(P,\cdot)$.
\end{thm}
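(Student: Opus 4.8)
The plan is to solve the problem by a constrained variational argument on the finite-dimensional space of origin-symmetric h-convex polytopes with prescribed facet normals. Since $\mu$ is even, its support is symmetric, say $\{\pm v_1,\dots,\pm v_m\}\subset\mathbb{S}^n$ with masses $\mu_j=\mu(\{v_j\})=\mu(\{-v_j\})>0$; I assume, as is necessary for a bounded polytope to exist, that these directions are not contained in a great subsphere. For $\mathbf h=(h_1,\dots,h_m)$ in a suitable open subset of $(0,\infty)^m$, let $P(\mathbf h)$ be the origin-symmetric h-convex polytope obtained by intersecting the horo-balls supported at level $h_j$ with horospherical normal $\pm v_j$, so that by symmetry $u(P,v_j)=u(P,-v_j)=h_j$. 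First I would record the first variation of the enclosed hyperbolic volume $V(\mathbf h)=\Vol(P(\mathbf h))$. Because horospheres are intrinsically flat and the defining Busemann levels move at unit normal speed, the volume swept when the supporting horosphere of the facet with normal $\pm v_j$ is displaced equals, to first order, the facet area times the displacement; hence $\partial V/\partial h_j = a_j^{+}+a_j^{-}=2a_j$, where $a_j$ is the common area of the two symmetric facets. This is the polytopal form of the first-variation formula underlying the smooth theory of \cite{LX}.

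Next, fix the normalization $V(\mathbf h)=1$ and consider the functional
\[ \Phi_p(\mathbf h)=\sum_{j=1}^m \mu_j\, e^{p h_j}\ \ (p\ne 0),\qquad \Phi_0(\mathbf h)=\sum_{j=1}^m \mu_j h_j. \]
At any critical point of $\Phi_p$ on $\{V=1\}$ the Lagrange multiplier rule gives $\mu_j\,p\,e^{p h_j}=2\lambda\,a_j$ for $p\ne0$ (respectively $\mu_j=2\lambda a_j$ for $p=0$), that is $\mu_j = c\, e^{-p h_j} a_j$ with $c=2\lambda/p$. Since $\mu_j,a_j$ and $e^{-p h_j}$ are all positive, the relation forces $c>0$, so the critical point satisfies $\mu = c\,S_p(P,\cdot)$ exactly as required. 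Thus, once a \emph{non-degenerate} extremizer is produced, the theorem follows; the entire difficulty is the existence statement that $\Phi_p$ is extremized on $\{V=1\}$ at an $\mathbf h$ with all coordinates finite and positive and with every prescribed direction realized as a genuine facet.

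For $p\ge 0$ this is straightforward. On $\{V=1\}$ the support values cannot all degenerate: if every $h_j\le M$ then $P(\mathbf h)$ lies in a fixed bounded region, so $V\le V(M)<\infty$, and conversely the fixed-volume condition forces $\min_j h_j\ge\delta(M)>0$, since otherwise an antipodal pair of facets would pinch the body and send $V\to0$. As $\Phi_p$ is increasing and proper in each $h_j$ for $p\ge0$, its sublevel sets on $\{V=1\}$ are compact, so a minimizer $\mathbf h^\ast$ exists. If some facet were absent ($a_j=0$), then $P$ would lie strictly inside the corresponding horo-ball, so decreasing $h_j$ slightly would leave $P$ (hence $V$) unchanged while strictly decreasing $\Phi_p$, contradicting minimality; thus every direction is a facet, $\mathbf h^\ast$ is non-degenerate, and the Lagrange computation applies.

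The main obstacle is the range $p<0$. Here $e^{p h_j}\to0$ as $h_j\to\infty$ and $e^{ph_j}\to1$ as $h_j\to0$, so neither minimizing nor maximizing $\Phi_p$ is automatically coercive: an extremizing sequence could try to degenerate into a ``needle'' (some $h_j\to\infty$) or a ``slab'' (some antipodal pairs $h_j\to0$) while keeping $V=1$. The crux is therefore an a priori estimate bounding the support values of the extremizer away from $0$ and $\infty$. I would obtain it by a concentration-compactness analysis: assuming degeneration, split the directions according to whether their support values collapse, stay bounded, or blow up, and derive a contradiction with optimality. The even structure is decisive here, for the $\pm$ symmetry fixes the center and forces the collapsing directions to occur in antipodal pairs carrying equal positive mass, which is precisely what makes the balancing inequality go through; this is the hyperbolic counterpart of the subspace-concentration obstruction met in the Euclidean $L_p$ problem for $p<0$, and I expect the horospherical geometry to make the estimate hold for every $p<0$ without further restriction. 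Once the a priori bound is in place, the chosen extremum is attained at a non-degenerate polytope and the Lagrange condition again yields $\mu=c\,S_p(P,\cdot)$, completing the proof for all $p\in\mathbb{R}$.
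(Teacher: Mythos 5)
Your argument for $p\ge 0$ is essentially sound and is the paper's own argument in dual form: the paper maximizes $V$ over the compact constraint set $\{\Phi_p=1\}$, while you minimize $\Phi_p$ over $\{V=1\}$; either way the Lagrange system $\mu_j=c\,e^{-ph_j}a_j$ with $c>0$ drops out. Two caveats even there. First, the first-variation formula $\partial V/\partial h_j=2a_j$, which you justify only heuristically, is a genuine technical theorem (the paper's Lemma 4.6, proved via Kohlmann's local Steiner formula and weak continuity of curvature measures); a complete proof must supply it. Second, your standing assumption that the directions are ``not contained in a great subsphere'' is not only unnecessary but false as a necessity claim: in $\mathbb{H}^{n+1}$ the intersection of two closed horoballs with distinct centers is already compact (this is why the paper's Example with $N=2$ makes sense), so by imposing it you prove strictly less than the stated theorem while importing a Euclidean obstruction that hyperbolic space does not have. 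Also, your remark that $a_j=0$ would let you decrease $h_j$ ``leaving $P$ unchanged'' is inaccurate when $\partial P$ meets the horosphere in a lower-dimensional set, though this is harmless: non-degeneracy follows directly from the Lagrange equations, since $\mu_j p e^{ph_j}\neq 0$ forces $a_j\neq 0$.

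The genuine gap is the case $p<0$, where you explicitly stop at the decisive step: the a priori estimate excluding the needle ($h_j\to\infty$) and slab ($h_j\to 0$) degenerations is exactly the content of that case, and at that point you write that you ``would obtain it by a concentration-compactness analysis'' and ``expect'' it to hold — that is a plan, not a proof (you also never commit to minimizing versus maximizing; with your normalization the correct problem is to maximize $\sum_j\mu_j e^{ph_j}$ on $\{V=V_0\}$, i.e.\ to minimize the paper's $\tfrac1p$-normalized functional). Moreover, the mechanism you anticipate — an optimality-based balancing inequality in the spirit of the Euclidean subspace-concentration condition — is not what makes the hyperbolic problem work; in the paper both bounds come from the volume constraint alone, with no appeal to optimality. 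The lower bound is your pinching remark: by origin-symmetry, $P\subset \overline{B}_{\textbf{e}_i}(h_i)\cap\overline{B}_{-\textbf{e}_i}(h_i)$, which shrinks to the origin as $h_i\to 0$, so $V=V_0>0$ forces $\min_i h_i\ge\delta>0$. The upper bound is the paper's Lemma 5.1: any h-convex body containing $O$ with $V(K)\le M$ satisfies $u(K,\cdot)\le C(M)$, because the h-convex hull $T(r)$ of a geodesic segment of length $r$ satisfies $V(T(r))\to\infty$ as $r\to\infty$ — in the h-convex category ``needles'' are forced to be fat, which is precisely the hyperbolic feature your Euclidean analogy misses. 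Even granted these bounds, your outline lacks two further ingredients the paper needs: the volume constraint bounds the support values $u(P(x),\textbf{e}_i)$, not the coordinates $x_i$ themselves, so along a minimizing sequence one must first replace $x$ by the admissible vector $\tilde x_i=u(P(x),\textbf{e}_i)\le x_i$ (which preserves $V$ and does not increase $\Phi_p$; Lemma 6.4), and one must then invoke a compactness/selection theorem for h-convex polytopes together with continuity of volume (Theorem 3.8 and Corollary 3.6) to extract a convergent minimizer. Without these steps your argument for $p<0$ does not establish the theorem.
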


The approach to prove Theorem \ref{Thm1.1} can be outlined as follows. First, we prove a selection theorem for h-convex polytopes. Additionally, we derive a variational formula for the volume functional of a h-convex polytope. Next, we divide Problem \ref{Problem 1.2} into two cases: $p \geq 0$ and $p<0$, and construct constrained optimization problems for each case such that their critical points are the desired solutions. Finally, by using the Lagrange multiplier method, the selection theorem and the variational formula proved in the first step, we prove the existence of solutions for the above two optimization problems.

The paper is organized as follows. 
In section \ref{sec:2}, we provide some basic definitions and properties about h-convex bodies in hyperbolic space.
In section \ref{sec:3}, we study the Hausdorff metric on h-convex bodies and prove a selection theorem for h-convex polytopes. In section \ref{sec:4}, we study horospherical Wulff shape and derive a variational formula of volume functional of h-convex polytopes. In section \ref{sec:5}, we prove a boundedness lemma for h-convex bodies. In section \ref{sec:6}, we give the proof of Theorem \ref{Thm1.1} by studying two constrained optimization problems.

\begin{ack}
	The work was supported by NSFC Grant No. 11831005 and NSFC Grant No. 12126405.
\end{ack}

\section{Preliminaries}\label{sec:2}

In this section, we collect some definitions and properties of horospherically convex bodies in hyperbolic space. We refer to \cites{And21,esp09,LX} for details.

Consider the Minkowski space $\mathbb{R}^{n+1,1}$ with canonical coordinates $(x_1,\ldots,x_{n+2})$ and the Lorentzian metric
$$X\cdot Y=\sum\limits_{i=1}^{n+1}x_i y_i-x_{n+2}y_{n+2}.$$
The hyperbolic space is then realized in the Minkowski space as the hyperboloid
$$\mathbb{H}^{n+1}=\{X=(x_1,\ldots,x_{n+2})\in\mathbb{R}^{n+1,1}:\ X\cdot X=-1,\ x_{n+2}>0\}.$$
Denote by $\hat{B}_r(X)$ the geodesic ball of radius $r$ centered at $X$ in $\mathbb{H}^{n+1}$, and denote by $d(X,Y)$ the geodesic distance between $X$ and $Y$ on $\mathbb{H}^{n+1}$. We call $O=(\textbf{0},1)$ the origin of the hyperbolic space $\mathbb{H}^{n+1}$.

An important class of totally umbilical hypersurfaces in $\mathbb{H}^{n+1}$ is the so-called \textit{horosphere}. Specifically, the horospheres in $\mathbb{H}^{n+1}$ are the hypersurfaces with principal curvatures equal to 1 everywhere.

Now we begin to parameterize the set of horospheres. Let $\Sigma$ be a horosphere in $\mathbb{H}^{n+1}$, denote by $\nu$ the outward unit normal of $\Sigma\subset\mathbb{H}^{n+1}$, and let $c=X-\nu$ be a nonzero vector-valued function defined on $\Sigma$. For any tangent vector $v \in T_X \mathbb{H}^{n+1}$, we then have 
$$\nabla_v c=\nabla_v(X-\nu)=0,$$
which implies that $c$ is constant along $\Sigma$. Moreover, by
$$X\cdot c=X\cdot(X-\nu)=-1,$$
we can describe the horosphere $\Sigma$ as the intersection of the null hyperplane $\{X\in \mathbb{R}^{n+1,1}:\ X\cdot c=-1\}$ and the hyperboloid $\mathbb{H}^{n+1}$. Since $c\cdot c=0$, we can assume $c=\lambda(\textbf{e},1)$ for some $\textbf{e}\in\mathbb{S}^n$ and $\lambda \in \mathbb{R} \backslash \{0\}$, then
$$-1=X\cdot c= \lambda X \cdot (\textbf{e},1).$$
On the other hand, since
$$X\cdot ( \textbf{e},1)=(x_1,\ldots, x_{n+1}) \cdot \textbf{e} - x_{n+2} \le \left(
\sum\limits_{i=1}^{n+1}x_i^2
\right)^{\frac{1}{2}}-x_{n+2} <0,$$
we have $\lambda>0$, and hence we can define $s=-\log \lambda$. Consequently, the set of horospheres in $\mathbb{H}^{n+1}$ can be parameterized by $(\textbf{e}, s) \in \mathbb{S}^n \times \mathbb{R}$.

Denote by $H_\textbf{e}(s)$ the horosphere
\begin{equation}\label{2.1}
    H_\textbf{e}(s)=\{X\in\mathbb{H}^{n+1}:\ X\cdot(\textbf{e},1)=-e^s\},
\end{equation}
where $\textbf{e}$ is called the center of $H_\textbf{e}(s)$.
Denote by $B_\textbf{e}(s)$ the \textit{horo-ball} enclosed by $H_\textbf{e}(s)$, i.e.
\begin{equation}\label{2.2}
    B_\textbf{e}(s)=\{X\in\mathbb{H}^{n+1}:\ 0>X\cdot(\textbf{e},1)>-e^s\}.
\end{equation}

Now we show that the parameter $s$ in $H_\textbf{e}(s)$ represents the signed geodesic distance from $O$ to $H_\textbf{e}(s)$. In fact, for any $X \in H_\textbf{e}(s)$, there exist $\theta \in \mathbb{S}^n$ and $r \geq 0$ such that $X=(\sinh r \theta,\cosh r)$. Here $r$ represents the geodesic distance between $X$ and $O$. Then
$$-e^r\le -e^s=X\cdot (\textbf{e},1)=\sinh r \theta\cdot \textbf{e}-\cosh r\le -e^{-r},$$
thus the geodesic distance between $H_\textbf{e}(s)$ and $O$ is $|s|$. The sign of $s$ determines whether the origin $O$ is inside $B_\textbf{e}(s)$.

In addition to the hyperboloid model $\mathbb{H}^{n+1}$, we will also use the Poincar\'e ball model $(\mathbb{B}^{n+1}, g_B)$ and the upper half-space model $(\mathbb{U}^{n+1}, g_U)$ of the hyperbolic space in the context.

The stereographic projection $\pi$ with respect to $(\textbf{0},-1)$ maps the hyperboloid model $\mathbb{H}^{n+1}$ to the Poincar\'e ball model $\mathbb{B}^{n+1}$, which is given by
\begin{equation*}
    \pi (x_1, \ldots, x_{n+1}, x_{n+2}) = \frac{(x_1, \ldots, x_{n+1})}{1+x_{n+2}}.
\end{equation*}
Conversely, for $Y = (y_1, \ldots, y_{n+1}) \in \mathbb{B}^{n+1}$, we have
\begin{equation*}
    \pi^{-1}(Y) = \left(\frac{2Y}{1- |Y|^2}, \frac{1+|Y|^2}{1- |Y|^2} \right).
\end{equation*}
This together with \eqref{2.1} and \eqref{2.2} then gives
\begin{align*}
    \pi \left(H_{\textbf{e}}(s) \right)=& \left\{ Y \in \mathbb{B}^{n+1} : \ \left|  Y - \frac{\textbf{e}}{1+e^s}\right| = \frac{e^s}{1+e^s}  \right\},\\
     \pi \left(B_{\textbf{e}}(s) \right)=& \left\{ Y \in \mathbb{B}^{n+1} : \ \left|  Y - \frac{\textbf{e}}{1+e^s}\right| < \frac{e^s}{1+e^s}  \right\}.
\end{align*}
Thus, in the Poincar\'e ball model $\mathbb{B}^{n+1}$, the horosphere $H_{\textbf{e}} (s)$ is a sphere tangent to $\textbf{e} \in \partial \mathbb{B}^{n+1}$. So the horospheres can be viewed as `spheres' centered at the infinity $\partial \mathbb{B}^{n+1}$ of the hyperbolic space.

The upper half-space model $(\mathbb{U}^{n+1}, g_{U})$ of the hyperbolic space is given by
\begin{equation*}
   \mathbb{U}^{n+1}=\{Y=(\textbf{y},y_{n+1}):\ \textbf{y}=(y_1,\ldots,y_n)\in\mathbb{R}^n,\ y_{n+1}>0\},\quad
g_U=\frac{1}{y_{n+1}^2}\sum\limits_{i=1}^{n+1} dy_i^2. 
\end{equation*}
Note that $ \partial \mathbb{U}^{n+1} = (\mathbb{R}^{n} \times \{0\} )\cup \{ \infty\}$ and $\mathbb{S}^n$ can be viewed as identical through an isometry between $(\mathbb{U}^{n+1},g_U)$ and $(\mathbb{B}^{n+1},g_B)$. If $\textbf{e} =\infty\in \partial \mathbb{U}^{n+1}$, then it follows from $g_U$ that
 \begin{equation*}
    H_\textbf{e}(s) = \{Y\in\mathbb{U}^{n+1}: y_{n+1}=e^{-s}\},\quad 
    B_\textbf{e}(s) = \{Y\in\mathbb{U}^{n+1}: y_{n+1} >e^{-s}\}.
\end{equation*}
If $\textbf{e} \in \mathbb{R}^{n} \times \{0\}$, then the expression for  $H_{\textbf{e}}(s)$ can be obtained by the Euclidean inversion of $H_{\infty} (s)$ with respect to the hemisphere centered at $\textbf{e}$ passing through $(\textbf{0},1)$, see e.g. Lopez's book \cite[p.193]{Lop}.

\begin{defn}
A closed domain $\Omega$ in $\mathbb{H}^{n+1}$ is called \textit{horospherically convex }(or \textit{h-convex} for short) if each boundary point $X$ of $\partial\Omega$ has a supporting horo-ball, i.e. a horo-ball $B$ such that $\Omega\subset\overline{B}$ and $X\in\partial B$. 
\end{defn}

The above definition is equivalent to the following condition on $\Omega$: for each pair of points in $\Omega$, all the entire horocycle arcs joining them are also contained in $\Omega$, see e.g. \cite{GST13}.

\begin{defn}
A \textit{h-convex body} is a compact h-convex subset of $\mathbb{H}^{n+1}$ with nonempty interior. Denote by $\mathcal{K}^h$ the class of h-convex bodies.
\end{defn}
Let us introduce notations for two subsets of $\mathcal{K}^h$.
\begin{itemize}
    \item Denote by $\mathcal{K}_o^h$ the class of h-convex bodies containing the origin $O$ in their interiors.
    \item Denote by $\mathcal{K}_e^h$ the class of origin-symmetric h-convex bodies.
\end{itemize}

Let $K\in\mathcal{K}^h$ be a h-convex body, and let $\textbf{e}\in \mathbb{S}^n$. Define the \textit{horospherical support function} of $K$ in direction $\textbf{e}$ by 
\begin{equation}\label{2.3}
    u(K,\textbf{e})=\inf \{s\in\mathbb{R}:\ K\subset \overline{B}_\textbf{e}(s)\}.
\end{equation}
 Then by (\ref{2.2}),
\begin{equation}\label{2.4}
    u(K,\textbf{e})=\sup \{f_\textbf{e}(X):\ X\in K\},
\end{equation}
where $f_\textbf{e}(X)=\log(-X\cdot(\textbf{e},1))$.
Moreover, $B_\textbf{e}(u(K,\textbf{e}))$ is called the \textit{supporting horo-ball} of $K$ in direction $\textbf{e}$.

When $\partial K$ is smooth, we define its \textit{horospherical Gauss map} $G:\partial K\to \mathbb{S}^n$ by setting $G(X)=\textbf{e}$ for $X\in\partial K$, where $\textbf{e}$ satisfies $X-\nu=\lambda(\textbf{e},1)$ for some $\lambda>0$. When $\partial K$ is not smooth, we call $\textbf{e}\in \mathbb{S}^n$ its \textit{horospherical normal} at $X \in \partial K$ if $- (X-X') \cdot  (\textbf{e},1) \geq 0$ for all $X'\in K$. Equivalently,  $X\in H_\textbf{e}(u(K,\textbf{e}))\cap \partial K$ and $X' \in B_\textbf{e}(u(K,\textbf{e}))$ for all $X' \in K$.

The main objects studied in this paper are the following h-convex polytopes.
\begin{defn}
A h-convex body $P$ is called a \textit{h-convex polytope} if it can be represented as the intersection of finite closed horo-balls, i.e., there exist $\{\textbf{e}_1, \ldots, \textbf{e}_m\} \subset \mathbb{S}^n$ ($m \geq 2$) and $\{u_1, \ldots, u_m\} \subset \mathbb{R}$ such that 
\begin{equation}\label{2.5}
 P=\bigcap\limits_{i=1}^m\overline{B}_{\textbf{e}_i}(u_i).
\end{equation}
Denote by $\mathcal{P}^h$ the set of h-convex polytopes.
If $F(P,\textbf{e}_i):=P\cap H_{\textbf{e}_i}(u_i)$ is $n$-dimensional, then $F(P,\textbf{e}_i)$ is called the \textit{facet of $P$ with horospherical normal $\textbf{e}_i$}.
\end{defn}

For convenience, let us introduce some notations for various subsets of $\mathcal{P}^h$. Let $\{\textbf{e}_1,\ldots,\textbf{e}_m\}$ be unit vectors in $\mathbb{S}^n\ (m\ge 2)$.
\begin{itemize}
\item Denote by $\mathcal{P}_o^h$ the class of h-convex polytopes containing the origin $O$ in their interiors, i.e. $\mathcal{P}_o^h = \mathcal{P}^h \cap \mathcal{K}_o^h$.

\item Denote by $\mathcal{P}_e^h$ the class of origin-symmetric h-convex polytopes, i.e. $\mathcal{P}_e^h = \mathcal{P}^h \cap \mathcal{K}_e^h$.

\item Denote by $\mathcal{P}^h(\textbf{e}_1,\ldots,\textbf{e}_m)$ the subset of $\mathcal{P}^h$ such that a h-convex polytope $P \in \mathcal{P}^h(\textbf{e}_1,\ldots,\textbf{e}_m)$ if $P$ can be represented as
\begin{equation*}
    P=\bigcap\limits_{i=1}^m \overline{B}_{\textbf{e}_i}(u(P,\textbf{e}_i)).
\end{equation*}
Obviously, if $P\in\mathcal{P}^h(\textbf{e}_1,\ldots,\textbf{e}_m)$, then $P$ has at most $m$ facets, and its horospherical normals are in $\{\textbf{e}_1,\ldots,\textbf{e}_m\}$.

\item Denote by $\mathcal{P}_m^h(\textbf{e}_1,\ldots,\textbf{e}_m)$ the subset of $\mathcal{P}^h(\textbf{e}_1,\ldots,\textbf{e}_m)$ such that a h-convex polytope $P\in\mathcal{P}_m^h(\textbf{e}_1,\ldots,\textbf{e}_m)$ if $P$ has exactly $m$ facets. 
\end{itemize}

\section{Hausdorff metric on h-convex bodies and a selection theorem}\label{sec:3}

For any $K \in \mathcal{K}^h$, denote by  $K^\epsilon$ the outer parallel set of $K$ with distance $\epsilon$, i.e.
\begin{equation*}
K^\epsilon=\{X\in\mathbb{H}^{n+1}:\ d(K,X) \leq \epsilon\}.
\end{equation*}
The following Proposition \ref{Prop3.1} gives the relationship between the horospherical support function of $K^{\epsilon}$ and that of $K$.

\begin{prop}\label{Prop3.1}
	Let $K\in\mathcal{K}^h$ and $\epsilon>0$. Then the horospherical support function of $K^\epsilon$ in direction $\textbf{e}$ is given by
	\begin{equation}\label{3.1}
	u(K^{\epsilon},\textbf{e})=u(K,\textbf{e})+\epsilon.
	\end{equation}
\end{prop}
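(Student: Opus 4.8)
The plan is to reduce the identity to a single property of the function $f_\textbf{e}(X)=\log(-X\cdot(\textbf{e},1))$ appearing in \eqref{2.4}, namely that it behaves like a signed distance to the horosphere foliation centered at $\textbf{e}$. First I would record that, for \emph{any} compact set $C\subset\mathbb{H}^{n+1}$, equations \eqref{2.2}--\eqref{2.4} give $\overline{B}_\textbf{e}(s)=\{X\in\mathbb{H}^{n+1}:f_\textbf{e}(X)\le s\}$ and $u(C,\textbf{e})=\max_{X\in C}f_\textbf{e}(X)$, the maximum being attained by compactness. Since $K^\epsilon=\bigcup_{X\in K}\overline{\hat{B}_\epsilon(X)}$, the desired identity \eqref{3.1} is then equivalent to the pointwise statement
$$\max_{Y\in\overline{\hat{B}_\epsilon(X)}}f_\textbf{e}(Y)=f_\textbf{e}(X)+\epsilon\qquad\text{for every }X\in\mathbb{H}^{n+1},$$
because taking the maximum over $X\in K$ of both sides yields $u(K^\epsilon,\textbf{e})=u(K,\textbf{e})+\epsilon$.

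The crux is the claim $|\nabla f_\textbf{e}|_g\equiv 1$ on $\mathbb{H}^{n+1}$. I would check this by a direct ambient computation: with the null vector $c=(\textbf{e},1)$, the tangential projection of the Lorentzian gradient is $\nabla f_\textbf{e}=\frac{c}{X\cdot c}+X$, and using $c\cdot c=0$ and $X\cdot X=-1$ one finds $|\nabla f_\textbf{e}|^2=1$ in two lines. (Alternatively, apply an isometry sending $\textbf{e}$ to $\infty$ in the upper half-space model, where $f_\infty=-\log y_{n+1}$ and $|\nabla f_\infty|_{g_U}=1$ is immediate.) This has two consequences. Integrating along a minimizing geodesic shows $f_\textbf{e}$ is $1$-Lipschitz, so $f_\textbf{e}(Y)\le f_\textbf{e}(X)+d(X,Y)\le f_\textbf{e}(X)+\epsilon$ whenever $d(X,Y)\le\epsilon$, which gives ``$\le$'' in the displayed identity. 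Conversely, since $|\nabla f_\textbf{e}|\equiv1$, the integral curve of $\nabla f_\textbf{e}$ through $X$ is a unit-speed geodesic along which $f_\textbf{e}$ increases at unit rate; following it for arclength $\epsilon$ produces a point $Y$ with $d(X,Y)=\epsilon$ and $f_\textbf{e}(Y)=f_\textbf{e}(X)+\epsilon$, giving ``$\ge$''. This proves the pointwise identity and hence \eqref{3.1}.

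The only genuine obstacle is the unit-gradient property; the rest is bookkeeping. It can also be packaged purely geometrically as $(\overline{B}_\textbf{e}(s))^\epsilon=\overline{B}_\textbf{e}(s+\epsilon)$, i.e.\ the outer parallel set of a horo-ball is again a horo-ball with parameter shifted by exactly $\epsilon$; combined with the monotonicity $K\subset L\Rightarrow K^\epsilon\subset L^\epsilon$, this shows $K^\epsilon\subset\overline{B}_\textbf{e}(s)$ holds if and only if $K\subset\overline{B}_\textbf{e}(s-\epsilon)$, and taking the infimum over admissible $s$ in \eqref{2.3} gives \eqref{3.1} at once. I expect the $f_\textbf{e}$-based formulation above to be the most economical, since the representation \eqref{2.4} is already at hand.
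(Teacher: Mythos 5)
Your proof is correct, but it takes a genuinely different route from the paper. The paper's proof applies an isometry to place $\textbf{e}$ at $\infty$ in the upper half-space model, where the horospheres $H_\textbf{e}(s)$ become the horizontal planes $\{y_{n+1}=e^{-s}\}$; the inequality $u(K^\epsilon,\textbf{e})\le u(K,\textbf{e})+\epsilon$ then follows from the explicit fact that the outer parallel set of a horo-ball is the horo-ball with parameter shifted by $\epsilon$, and the reverse inequality is obtained by taking a touching point $\bar Y\in\partial K\cap H_\textbf{e}(u(K,\textbf{e}))$ and translating it vertically (scaling $y_{n+1}$ by $e^{-\epsilon}$) to produce a point of $K^\epsilon$ on $H_\textbf{e}(u(K,\textbf{e})+\epsilon)$. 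You instead stay in the hyperboloid model and isolate the intrinsic fact $|\nabla f_\textbf{e}|_g\equiv 1$, verified by the two-line Lorentzian computation $\nabla f_\textbf{e}=\frac{c}{X\cdot c}+X$, $|\nabla f_\textbf{e}|^2=0+2-1=1$ (which checks out); the $1$-Lipschitz bound then gives ``$\le$'' and the gradient flow gives ``$\ge$''. The two arguments have the same two-sided structure and rest on the same geometric fact --- that the horospheres centered at $\textbf{e}$ form a foliation by parallel hypersurfaces traversed at unit rate --- but yours is coordinate-free, avoids the reduction-by-isometry step, identifies $f_\textbf{e}$ explicitly as a signed distance (Busemann-type) function, and makes visible that h-convexity of $K$ is never used, so the identity holds for arbitrary compact sets; the paper's computation is more elementary, requiring no gradient formalism. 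Two small points: your claim that the integral curve of $\nabla f_\textbf{e}$ is a geodesic needs the standard identity $\nabla_{\nabla f}\nabla f=\tfrac12\nabla|\nabla f|^2=0$, though your argument only requires $d(X,Y)\le\epsilon$, which already follows from unit speed; and in your closing ``purely geometric'' repackaging, the implication $K^\epsilon\subset\overline{B}_\textbf{e}(s)\Rightarrow K\subset\overline{B}_\textbf{e}(s-\epsilon)$ does not follow from monotonicity and the parallel-set identity alone --- it again needs the ``$\ge$'' half of your pointwise statement, so that remark is only valid as a corollary of, not a substitute for, your main argument.
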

\begin{proof}
	We work in the upper half-space model $(\mathbb{U}^{n+1},g_U)$. Without loss of generality, we can assume that
	\begin{equation*}
	H_\textbf{e}(s) = \{Y\in\mathbb{U}^{n+1}: y_{n+1}=e^{-s}\}.
	\end{equation*}
	Taking $s= u(K,\textbf{e})$ and $s= u(K, e)+\epsilon$ respectively, we have
	\begin{align*}
	H_\textbf{e}(u(K,\textbf{e})) =& \{Y\in\mathbb{U}^{n+1}: y_{n+1}=e^{-u(K, \textbf{e})}\},\\
	H_\textbf{e}(u(K,\textbf{e})+\epsilon) =& \{Y\in\mathbb{U}^{n+1}: y_{n+1}=e^{-u(K, \textbf{e})-\epsilon}\}.
	\end{align*}
	Clearly, the geodesic distance between $H_\textbf{e}(u(K,\textbf{e}))$ and $H_\textbf{e}(u(K,\textbf{e})+\epsilon)$ is $\epsilon$. Thus $$B_\textbf{e}(u(K,\textbf{e}))^{\epsilon}=B_\textbf{e}(u(K,\textbf{e})+\epsilon).$$
	Note that the fact $K\subset B_\textbf{e}(u(K,\textbf{e}))$ induces $K^{\epsilon} \subset B_\textbf{e}(u(K,\textbf{e}))^{\epsilon}$.
	Then we have
	\begin{equation}\label{3.2}
	u(K^{\epsilon},\textbf{e}) \leq u(K,\textbf{e})+\epsilon.
	\end{equation}

	On the other hand, the definition of $u(K, \textbf{e})$ in (\ref{2.3}) implies $\partial K \cap  H_{\textbf{e}}(u(K,\textbf{e})) \neq \emptyset$. Then there exists $\bar{Y} =( \bar{y}_1, \ldots, \bar{y}_{n+1} ) \in\partial K \cap  H_{\textbf{e}}(u(K,\textbf{e}))$. Let $Y^* = ( y^*_1, \ldots y^*_{n+1} )$, where $y^*_i = \bar{y}_i$, $i=1,\ldots, n$, and $y^*_{n+1} = e^{-\epsilon}\bar{y}_{n+1}$.
	Then $Y^*\in H_\textbf{e}(u(K,\textbf{e})+\epsilon)$ and $d(\bar{Y},Y^*)=\epsilon$, which implies $Y^* \in K^{\epsilon}$.
	Consequently, 
	\begin{equation}\label{3.3}
	u(K^{\epsilon},\textbf{e}) \geq u(K,\textbf{e})+\epsilon.
	\end{equation}
	Then the desired formula (\ref{3.1}) follows from (\ref{3.2}) and (\ref{3.3}). We complete the proof of Proposition \ref{Prop3.1}.
\end{proof}

Recall that the Hausdorff metric between two sets $K, L\in \mathcal{K}^h$ is defined by
\begin{equation}\label{3.4}
d_{\mathcal{H}}(K,L)=\min\{\epsilon\ge 0:\ K\subset L^\epsilon,\ L\subset K^{\epsilon}\}.
\end{equation}

\begin{cor}\label{Cor3.2}
	Let $K, L \in \mathcal{K}^h$. Then
	\begin{equation}\label{3.5}
	d_{\mathcal{H}}(K,L)=\max\{|u(K,\textbf{e})-u(L,\textbf{e})|:\ \textbf{e}\in \mathbb{S}^n\}.
	\end{equation}
\end{cor}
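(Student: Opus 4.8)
The plan is to derive the Hausdorff metric formula directly from Proposition \ref{Prop3.1}, which relates the support function of an outer parallel set to that of the original body. The key observation is that both the definition of the Hausdorff metric in \eqref{3.4} and the support function behave monotonically with respect to set inclusion, so I expect to translate the containment conditions $K \subset L^\epsilon$ and $L \subset K^\epsilon$ into inequalities between support functions.

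First I would establish an elementary monotonicity lemma for the horospherical support function: if $K \subset L$ for $K, L \in \mathcal{K}^h$, then $u(K,\textbf{e}) \le u(L,\textbf{e})$ for every $\textbf{e} \in \mathbb{S}^n$. This follows immediately from the characterization \eqref{2.4}, namely $u(K,\textbf{e}) = \sup\{f_\textbf{e}(X) : X \in K\}$, since taking the supremum over a larger set can only increase the value. Next, I would fix $\epsilon \ge 0$ with $K \subset L^\epsilon$ and $L \subset K^\epsilon$. Applying monotonicity together with Proposition \ref{Prop3.1}, the inclusion $K \subset L^\epsilon$ gives $u(K,\textbf{e}) \le u(L^\epsilon,\textbf{e}) = u(L,\textbf{e}) + \epsilon$, and symmetrically $L \subset K^\epsilon$ gives $u(L,\textbf{e}) \le u(K,\textbf{e}) + \epsilon$. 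Combining these yields $|u(K,\textbf{e}) - u(L,\textbf{e})| \le \epsilon$ for all $\textbf{e}$, so the right-hand side of \eqref{3.5} is at most the Hausdorff distance, giving one inequality.

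For the reverse inequality, I would set $\epsilon_0 := \max\{|u(K,\textbf{e}) - u(L,\textbf{e})| : \textbf{e} \in \mathbb{S}^n\}$ and show that $K \subset L^{\epsilon_0}$ and $L \subset K^{\epsilon_0}$, which by the minimality in \eqref{3.4} forces $d_{\mathcal{H}}(K,L) \le \epsilon_0$. The crux here is to recover a containment of bodies from a pointwise bound on their support functions. The natural route is to use that any h-convex body is the intersection of its supporting horo-balls, i.e. $L = \bigcap_{\textbf{e}} \overline{B}_\textbf{e}(u(L,\textbf{e}))$, so that $L^{\epsilon_0} = \bigcap_{\textbf{e}} \overline{B}_\textbf{e}(u(L,\textbf{e}) + \epsilon_0)$ by Proposition \ref{Prop3.1}. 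Since $u(K,\textbf{e}) \le u(L,\textbf{e}) + \epsilon_0$ for every $\textbf{e}$, each supporting horo-ball $\overline{B}_\textbf{e}(u(L,\textbf{e})+\epsilon_0)$ contains $K$ (as $\overline{B}_\textbf{e}(u(K,\textbf{e}))$ contains $K$ and the horo-balls are nested in $s$), whence $K$ lies in the intersection, i.e. $K \subset L^{\epsilon_0}$; the other containment is symmetric.

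I expect the main obstacle to be the justification that an h-convex body equals the intersection of its supporting horo-balls, which is exactly the representation needed to pass from support-function bounds back to set inclusions. This is the h-convex analogue of the classical fact that a convex body is the intersection of its supporting half-spaces, and it should follow from the definition of h-convexity (every boundary point has a supporting horo-ball) together with the monotone nesting of the horo-balls $\overline{B}_\textbf{e}(s)$ in the parameter $s$ recorded in \eqref{2.2}; if this representation is not yet available in the paper, I would instead argue directly that any point $X \notin K$ can be strictly separated by some supporting horo-ball $\overline{B}_\textbf{e}(u(K,\textbf{e}))$, using the supremum characterization \eqref{2.4} to produce a direction $\textbf{e}$ with $f_\textbf{e}(X) > u(K,\textbf{e})$.
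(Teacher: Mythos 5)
Your proposal is correct and takes essentially the same route as the paper: both use Proposition \ref{Prop3.1} to convert the inclusions $K \subset L^{\epsilon}$ and $L \subset K^{\epsilon}$ into the pointwise support-function inequalities, and then read \eqref{3.5} off from the definition \eqref{3.4}. The only difference is explicitness: the paper asserts this equivalence in a single line, whereas you correctly isolate its nontrivial half (that support-function bounds imply set inclusion) and note that it rests on an h-convex body being the intersection of its supporting horo-balls --- which is precisely \eqref{4.2}, justified by the separating horosphere theorem (Theorem \ref{Thm4.2}), so this ingredient is available in the paper, albeit only in Section \ref{sec:4}.
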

\begin{proof}
	By (\ref{3.1}), the inclusions $K \subset L^{\epsilon}$ and $L \subset K^{\epsilon}$ are equivalent to, for all $\textbf{e} \in \mathbb{S}^n$,
	\begin{equation*}
	u(K, \textbf{e}) \leq u(L, \textbf{e}) +\epsilon \quad {\rm and} \quad 
	u(L, \textbf{e}) \leq u(K, \textbf{e}) +\epsilon,
	\end{equation*}
	respectively. Then by (\ref{3.4}), we have
	\begin{align*}
	d_{\mathcal{H}}(K,L)=&\min\{\epsilon\ge 0:   | u(K,\textbf{e})-u(L,\textbf{e}) |\leq \epsilon \ {\rm for \ all} \ \textbf{e} \in \mathbb{S}^n  \}\\
	=&\max\{|u(K,\textbf{e})-u(L,\textbf{e})|:\ \textbf{e}\in \mathbb{S}^n\}.
	\end{align*}
	This completes the proof of Corollary \ref{Cor3.2}.
\end{proof}

\begin{defn}
	We say that a sequence of h-convex bodies $\{K_i\}_{i=1}^{\infty}$ \textit{converges to} a compact h-convex subset $K\subset\mathbb{H}^{n+1}$ \textit{with respect to the Hausdorff metric} if
	$$d_{\mathcal{H}}(K_i,K)\to 0,\quad \text{as}\ i\to\infty.$$
\end{defn}

For any $K\in\mathcal{K}_o^h$, the radial function of $K$ in direction $\theta\in\mathbb{S}^n$ is defined by
$$\rho(K,\theta)=\max\{\lambda\ge0: (\sinh \lambda\cdot\theta,\cosh\lambda)\in K\}.$$
Now we proceed to show that the convergence of h-convex bodies with respect to the Hausdorff metric on $\mathcal{K}_o^h$ is equivalent to the uniform convergence of their radial functions on $\mathbb{S}^n$. See \cite{sch14} for the Euclidean case. Denote
\begin{align}\label{3.6}
R(K) :=\max\limits_{\theta\in\mathbb{S}^n}\rho(K,\theta),\quad
r(K) :=\min\limits_{\theta\in\mathbb{S}^n}\rho(K,\theta).
\end{align}
It follows from (\ref{2.4}) that
\begin{align*}
u(K,\textbf{e})
=\sup\limits_{\theta\in \mathbb{S}^n}\log(\cosh \rho(K,\theta)-\sinh \rho(K,\theta)\theta\cdot\textbf{e})
\ge \rho(K,-\textbf{e}).
\end{align*}
Then we get
\begin{equation*}
\max\limits_{\textbf{e}\in\mathbb{S}^n} u(K,\textbf{e})\ge R(K), \quad \min\limits_{\textbf{e}\in\mathbb{S}^n} u(K,\textbf{e})\ge r(K).
\end{equation*}
Assume that $\textbf{e}_0\in\mathbb{S}^n$ attains the maximum of $u(K,\textbf{e})$, we have
\begin{align*}
\max\limits_{\textbf{e}\in\mathbb{S}^n} u(K,\textbf{e})
=u(K,\textbf{e}_0)
\le\sup\limits_{\theta\in \mathbb{S}^n}\log(\cosh \rho(K,\theta)+\sinh \rho(K,\theta))=R(K).
\end{align*}

On the other hand, suppose that $\theta_0\in\mathbb{S}^n$ attains the minimum of $\rho(K,\theta)$. Let us consider the Poincar\'e ball model $(\mathbb{B}^{n+1},g_B)$, see Figure \ref{Fig-3-1}. Denote by $P$ the boundary point of $K$ that satisfies $d(P,O)=r(K)$, and denote by $B^*=B_{\textbf{e}^*}(s^*)$ the supporting horo-ball of $K$ at $P$. Since $K$ contains the geodesic ball $\hat{B}=\hat{B}_{r(K)}(O)$, and there exists a unique horo-ball $B_{-\theta_0}(r(K))$ such that $\hat{B}$ is internally tangent to it at $P$, we then derive that $\textbf{e}^*=-\theta_0$ and $s^*=r(K)$. Thus
\begin{align*}
r(K)=u(K,-\theta_0)\ge \min\limits_{\textbf{e}\in\mathbb{S}^n} u(K,\textbf{e}).
\end{align*}
Putting the above facts together, we obtain
\begin{equation}\label{3.7}
\max\limits_{\textbf{e}\in\mathbb{S}^n} u(K,\textbf{e})=R(K),\quad
\min\limits_{\textbf{e}\in\mathbb{S}^n} u(K,\textbf{e})= r(K).
\end{equation}
This together with (\ref{3.1}) gives
\begin{equation}\label{3.8}
R(K^{\epsilon}) = R(K)+\epsilon, \quad r(K^\epsilon) =r(K)+\epsilon.
\end{equation}

\begin{figure}[htbp]
	\centering
	\begin{tikzpicture}[scale=0.5][>=Stealth] 
	\draw (0,0) circle (5);
	\draw (0,0) circle (2);
	\fill (0,0) circle (3pt);
	\node[below] at (0,0) {{\footnotesize{$O$}}};
	\node[below] at (1.2,-1.7) {{\footnotesize{$\hat{B}$}}};
	\node[below] at (0,-3.3) {{\footnotesize{$B^*$}}};
	
	\draw (1.5,0) circle (3.5);
	\fill (-2,0) circle (3pt);
	\node[right] at (-2,0) {{\footnotesize{$P$}}};
	
	\draw[-stealth] (5,0)--(5.5,0);
	\node[above right] at (5,0) {{\footnotesize{$\textbf{e}^*$}}};
	\draw[-stealth] (-2,0)--(-2.5,0);
	\node[above left] at (-2,0) {{\footnotesize{$\theta_0$}}};
	
	\node[left] at (8,-3) {$\mathbb{B}^{n+1}$};
	\end{tikzpicture}
	\caption{\ }
	\label{Fig-3-1}
\end{figure}

In the following lemma, we investigate the correlation between the radial functions of $K$ and $K^\epsilon$.
\begin{lem}\label{Lem3.4}
	Let $K\in\mathcal{K}_o^h$. Then for any $0<\epsilon<1$ and any $\theta \in \mathbb{S}^n$, we have
	\begin{equation}\label{3.9}
	\rho(K^\epsilon,\theta)\le\rho(K,\theta)+\frac{2e^{2R(K)}}{r(K) e^{r(K)}} \epsilon.
	\end{equation}
\end{lem}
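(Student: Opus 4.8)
The plan is to reduce everything to a single geodesic ray and to convert the exact control on support functions from Proposition \ref{Prop3.1} into control on radial functions via an integral estimate of a logarithmic derivative. The key structural idea is that although the radial function need not change by exactly $\epsilon$ under the parallel operation (unlike the support function, by \eqref{3.1}), its rate of change along the ray is comparable to that of a quantity we control exactly.

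Fix $\theta$, write $\rho=\rho(K,\theta)$ and $\rho'=\rho(K^\epsilon,\theta)$, and let $\gamma(t)=(\sinh t\,\theta,\cosh t)$ be the unit-speed ray from $O$. Since $K\subset K^\epsilon$ we have $\rho\le\rho'$, and $Z:=\gamma(\rho)\in\partial K$. As $K$ is h-convex, $Z$ has a horospherical normal $\textbf{e}^*$; set $a=u(K,\textbf{e}^*)$ and $c=\theta\cdot\textbf{e}^*\in[-1,1)$, so that $K\subset\overline{B}_{\textbf{e}^*}(a)$ and $Z\in H_{\textbf{e}^*}(a)$. By \eqref{2.4} one has $f_{\textbf{e}^*}(\gamma(t))=G(t)$, where $G:=\log g$ and $g(t)=\cosh t-c\sinh t$. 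Then $G(\rho)=a$ exactly, while the monotonicity of the parallel operation together with $\overline{B}_{\textbf{e}^*}(a)^\epsilon=\overline{B}_{\textbf{e}^*}(a+\epsilon)$ (established inside the proof of Proposition \ref{Prop3.1}) gives $K^\epsilon\subset\overline{B}_{\textbf{e}^*}(a+\epsilon)$, whence $G(\rho')\le a+\epsilon$ because $\gamma(\rho')\in K^\epsilon$. Subtracting produces the key inequality
\[
\int_\rho^{\rho'} G'(t)\,dt=G(\rho')-G(\rho)\le\epsilon .
\]

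It remains to bound $G'$ from below on $[\rho,\rho']$. A direct computation gives $g^2-(g')^2=1-c^2$ and $g''=g$, hence on the branch where $g'>0$,
\[
G'(t)=\sqrt{1-(1-c^2)e^{-2G(t)}},\qquad G''(t)=\frac{1-c^2}{g(t)^2}\ge 0 .
\]
Since $O=\gamma(0)$ is interior to $\overline{B}_{\textbf{e}^*}(a)$ (as $a\ge r(K)>0$, so $g(0)=1<e^{a}=g(\rho)$) and $g$ is strictly convex, the exit parameter $\rho$ lies to the right of the minimum of $g$; thus $[\rho,\rho']$ sits on the increasing branch, $G'$ is nondecreasing there, and its minimum is attained at $\rho$. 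Using $a=u(K,\textbf{e}^*)\ge r(K)$ and $1-c^2\le 1$,
\[
G'(t)\ge G'(\rho)=\sqrt{1-(1-c^2)e^{-2a}}\ge\sqrt{1-e^{-2r(K)}}>0,\qquad t\in[\rho,\rho'] .
\]
Combined with the integral inequality this yields $\rho'-\rho\le\epsilon/\sqrt{1-e^{-2r(K)}}$, and the elementary estimate $e^{2x}\ge 1+2x^2$ (with $R(K)\ge r(K)$) gives $1/\sqrt{1-e^{-2r(K)}}\le 2e^{2R(K)}/(r(K)e^{r(K)})$, establishing \eqref{3.9}.

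The one genuinely delicate point is the direction of the inequality, which dictates the choice of anchor: the comparison must be made at the supporting horo-ball at the \emph{near} point $Z=\gamma(\rho)$, where $G(\rho)=a$ exactly and $G(\rho')\le a+\epsilon$. Anchoring instead at the far point $\gamma(\rho')$ gives only $G(\rho')-G(\rho)\ge\epsilon$, i.e. a lower bound for $\rho'-\rho$, which is useless here. Everything else is convexity bookkeeping—verifying that $\rho$ lies past the minimum of $g$ and that the logarithmic derivative $G'$ is monotone—needed to make the uniform lower bound $\sqrt{1-e^{-2r(K)}}$ legitimate across the entire interval.
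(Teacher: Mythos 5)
Your proof is correct, and its skeleton coincides with the paper's: both anchor at the supporting horo-ball $\overline{B}_{\textbf{e}^*}(a)$ at the \emph{near} point $\gamma(\rho)\in\partial K$, both invoke Proposition \ref{Prop3.1} to shift that horo-ball outward by exactly $\epsilon$, and both compare with the far point via \eqref{2.4}, arriving at the same key inequality (the paper writes it multiplicatively as $e^{\epsilon}\ge g(\rho_\epsilon)/g(\rho)$, you write it as $G(\rho')-G(\rho)\le\epsilon$ with $G=\log g$). Where you genuinely diverge is the conversion of this inequality into a linear bound on $\rho'-\rho$: the paper estimates the finite difference directly, using hyperbolic half-angle identities and the substitution $\theta\cdot\textbf{e}=(\cosh\rho-e^{u(K,\textbf{e})})/\sinh\rho$, and its constant $r(K)e^{r(K)}/e^{2R(K)}$ emerges from that chain; you instead differentiate, using $g^2-(g')^2=1-c^2$ and $g''=g$ to get the closed form $G'=\sqrt{1-(1-c^2)e^{-2G}}$ together with $G''\ge0$, lower-bound $G'\ge\sqrt{1-e^{-2r(K)}}$ on $[\rho,\rho']$ (your verification that $\rho$ lies past the minimum of $g$ is the necessary and correct bookkeeping), and integrate. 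Your route buys two things: the intermediate bound $\rho'-\rho\le\epsilon/\sqrt{1-e^{-2r(K)}}$ is valid for \emph{all} $\epsilon>0$ (the paper needs $\epsilon<1$ only to linearize $e^\epsilon-1\le2\epsilon$), and the monotonicity argument replaces the paper's somewhat opaque string of $\sinh$/$\cosh$ manipulations; the price is the extra numerical step matching the paper's constant, which does check out: $e^{2r}-1\ge 2r^2$ gives $1/\sqrt{1-e^{-2r}}=e^r/\sqrt{e^{2r}-1}\le e^r/(\sqrt{2}\,r)$, and $e^{2r}\le 2\sqrt{2}\,e^{2R}$ since $r(K)\le R(K)$ by \eqref{3.6}, yielding \eqref{3.9}.
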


\begin{proof}
	Let $\theta$ be any fixed point on $\mathbb{S}^n$.
	Denote $\rho(K,\theta)$ as $\rho$ and $\rho(K^\epsilon,\theta)$ as $\rho_\epsilon$. Let $X=(\sinh\rho\theta,\cosh\rho)$ and $X^\epsilon=(\sinh\rho_\epsilon \theta,\cosh\rho_\epsilon)$ be the boundary points  of $K$ and $K^\epsilon$ in direction $\theta$, respectively. Obviously, $\rho \leq R(K)$ and $\rho_\epsilon > \rho>0$.
	
	Since $X\in\partial K$, it follows from the h-convexity of $K$ that there exists $\textbf{e}\in\mathbb{S}^n$ satisfying
	\begin{align}\label{3.10}
	u(K,\textbf{e})=\log(-X\cdot(\textbf{e},1))=\log(\cosh\rho-\sinh \rho\theta\cdot \textbf{e}).
	\end{align}
	This together with (\ref{3.1}) shows
	\begin{equation}\label{3.11}
	u(K^\epsilon,\textbf{e}) 
	=u(K,\textbf{e})+\epsilon
	=\log(\cosh\rho-\sinh \rho \theta\cdot \textbf{e})+\epsilon.
	\end{equation}   
	Besides, we know from (\ref{2.4}) and $X^{\epsilon} \in \partial K^{\epsilon} $ that 
	\begin{equation}\label{3.12}
	u(K^\epsilon,\textbf{e})  
	\ge \log(-X^\epsilon\cdot(\textbf{e},1))
	=\log(\cosh\rho_\epsilon-\sinh \rho_\epsilon \theta\cdot \textbf{e}).
	\end{equation}
	Comparing (\ref{3.11}) with (\ref{3.12}) gives
	\begin{align}\label{e-epsl-1}
	e^\epsilon-1 
	&\ge \frac{\cosh\rho_\epsilon-\sinh \rho_\epsilon \theta\cdot \textbf{e}}{\cosh\rho-\sinh \rho \theta\cdot \textbf{e}}-1 \nonumber\\
	&= \frac{ \(\cosh\rho_\epsilon- \cosh \rho\) -(\sinh \rho_\epsilon-\sinh \rho)\theta\cdot \textbf{e}}{ \cosh\rho-\sinh \rho \theta\cdot\textbf{e}} \nonumber\\
	&=2\sinh\frac{\rho_\epsilon-\rho}{2}\cdot\frac{\sinh\frac{\rho_\epsilon+\rho}{2}-\cosh\frac{\rho_\epsilon+\rho}{2} \theta\cdot \textbf{e}}{\cosh\rho-\sinh \rho \theta\cdot \textbf{e}} \nonumber\\
	&\ge 2 e^{-R(K)} \sinh\frac{\rho_\epsilon-\rho}{2}\left(\sinh\frac{\rho_\epsilon+\rho}{2}-\cosh\frac{\rho_\epsilon+\rho}{2} \theta\cdot \textbf{e}\right), 
	\end{align}
	where we used $\rho \leq R(K)$ in the last inequality. Note that (\ref{3.10}) implies
	\begin{equation}\label{3.14}
	\theta\cdot \textbf{e}=\frac{\cosh \rho- \exp{(u(K,\textbf{e}))}}{\sinh\rho}.
	\end{equation}
	Substituting (\ref{3.14}) into the right-hand side of \eqref{e-epsl-1} yields
	\begin{align*}
	e^\epsilon-1 
	&\ge 2 e^{-R(K)}
	\sinh\frac{\rho_\epsilon-\rho}{2}\cdot
	\left(\sinh\frac{\rho_\epsilon+\rho}{2}
	-\cosh\frac{\rho_\epsilon+\rho}{2} \cdot
	\frac{\cosh \rho -\exp u(K, \textbf{e})  }{\sinh \rho }\right)\\
	&= 2 e^{-R(K)} \sinh\frac{\rho_\epsilon-\rho}{2}\cdot\frac{\exp{u(K,\textbf{e})}\cosh\frac{\rho_\epsilon+\rho}{2}-\cosh\frac{\rho_\epsilon-\rho}{2}}{\sinh\rho}\\
	&\ge 2 e^{-R(K)} \sinh\frac{\rho_\epsilon-\rho}{2}\cdot\frac{(\exp{u(K,\textbf{e})}-1)\cosh\frac{\rho_\epsilon+\rho}{2}}{\sinh\rho}\\
	&\ge 2 e^{-R(K)} \sinh\frac{\rho_\epsilon-\rho}{2}\cdot\frac{(e^{r(K)}-1)\cosh r(K)}{\sinh R(K)}\\
	&\ge \frac{r(K) e^{r(K)}}{e^{2R(K)}} (\rho_{\epsilon}-\rho),
	\end{align*}
	where we used (\ref{3.7}) and the facts that $2 \sinh \frac{\rho_{\epsilon} -\rho}{2} \geq \rho_\epsilon - \rho$, $e^{r(K)}-1 \geq r(K)$, $\sinh R(K) \leq \frac{1}{2}e^{R(K)}$ and $\cosh r(K) \geq \frac{1}{2} e^{r(K)}$. It is easy to see that $e^{\epsilon}-1 \leq 2\epsilon$ when $0<\epsilon<1$.
	Thus, for $0<\epsilon<1$ we have
	\begin{equation*}
	\rho_\epsilon-\rho\le \frac{e^{2R(K)}}{r(K) e^{r(K)}}(e^\epsilon-1)
	\le \frac{2e^{2R(K)}}{r(K) e^{r(K)}}\epsilon,
	\end{equation*}
	which is the desired inequality (\ref{3.9}). We complete the proof of Lemma \ref{Lem3.4}.
\end{proof}

\begin{thm}\label{Thm3.5}
Let $\{K_i\}_{i=1}^{\infty} \subset \mathcal{K}_o^h$ be a sequence of h-convex bodies, and let $K \in \mathcal{K}_o^h$. The following statements are equivalent,
\begin{enumerate}[(i)]
	\item $K_i$ converges to $K$ in Hausdorff metric as $i \to \infty$,
	\item $u(K_i,\cdot)$ converges uniformly on $\mathbb{S}^n$ to $u(K,\cdot)$ as $i \to \infty$,
	\item $\rho(K_i,\cdot)$ converges uniformly on $\mathbb{S}^n$ to $\rho(K,\cdot)$  as $i \to \infty$.
\end{enumerate}
\end{thm}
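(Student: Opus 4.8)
The plan is to close the cycle of implications (ii) $\Rightarrow$ (i) $\Rightarrow$ (iii) $\Rightarrow$ (ii), with each link drawing on exactly one of the tools assembled above. The equivalence of (i) and (ii) is essentially free: Corollary \ref{Cor3.2} identifies $d_{\mathcal{H}}(K_i,K)$ with $\max_{\textbf{e}\in\mathbb{S}^n}|u(K_i,\textbf{e})-u(K,\textbf{e})|$, so $d_{\mathcal{H}}(K_i,K)\to 0$ holds if and only if $u(K_i,\cdot)\to u(K,\cdot)$ uniformly on $\mathbb{S}^n$. This settles (i) $\Leftrightarrow$ (ii) at once and reduces the problem to relating the radial functions to the support functions.

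For (iii) $\Rightarrow$ (ii), I would use the sup-representation $u(K,\textbf{e})=\sup_{\theta\in\mathbb{S}^n}\log(\cosh\rho(K,\theta)-\sinh\rho(K,\theta)\,\theta\cdot\textbf{e})$ derived just before \eqref{3.7}. Writing $F(\rho,\theta,\textbf{e})=\log(\cosh\rho-\sinh\rho\,\theta\cdot\textbf{e})$, the function $F$ is jointly continuous, hence uniformly continuous in $\rho$ on the compact range $[0,\sup_i R(K_i)]$, uniformly over $(\theta,\textbf{e})$; this range is finite once (iii) is assumed, since uniform convergence of $\rho(K_i,\cdot)$ forces $R(K_i)=\max_\theta\rho(K_i,\theta)\to R(K)$. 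Then $|F(\rho(K_i,\theta),\theta,\textbf{e})-F(\rho(K,\theta),\theta,\textbf{e})|$ is bounded by the modulus of continuity of $F$ evaluated at $\|\rho(K_i,\cdot)-\rho(K,\cdot)\|_\infty$, uniformly in $\theta$ and $\textbf{e}$. Since passing to a supremum over $\theta$ does not increase this uniform bound, $\|\rho(K_i,\cdot)-\rho(K,\cdot)\|_\infty\to 0$ forces $\|u(K_i,\cdot)-u(K,\cdot)\|_\infty\to 0$.

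The substantive step is (i) $\Rightarrow$ (iii), where Lemma \ref{Lem3.4} is the crucial input. Setting $\epsilon_i=d_{\mathcal{H}}(K_i,K)\to 0$, the definition \eqref{3.4} gives $K_i\subset K^{\epsilon_i}$ and $K\subset K_i^{\epsilon_i}$, hence the radial inequalities $\rho(K_i,\theta)\le\rho(K^{\epsilon_i},\theta)$ and $\rho(K,\theta)\le\rho(K_i^{\epsilon_i},\theta)$. Applying Lemma \ref{Lem3.4} to $K$ bounds the first by $\rho(K,\theta)+\tfrac{2e^{2R(K)}}{r(K)e^{r(K)}}\epsilon_i$, a bound with a fixed constant; applying the same lemma to $K_i$ bounds the second by $\rho(K_i,\theta)+\tfrac{2e^{2R(K_i)}}{r(K_i)e^{r(K_i)}}\epsilon_i$ (legitimate for large $i$, since $\epsilon_i<1$ eventually). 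Combining the two yields a uniform-in-$\theta$ estimate $|\rho(K_i,\theta)-\rho(K,\theta)|\le C_i\,\epsilon_i$.

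The one delicate point—and what I expect to be the real obstacle—is that the constant from the second application depends on $K_i$ through $R(K_i)$ and $r(K_i)$, so it must be kept uniformly bounded as $i\to\infty$. Here I would invoke \eqref{3.7}: since (i) $\Leftrightarrow$ (ii) already gives uniform convergence of the support functions, $R(K_i)=\max_{\textbf{e}}u(K_i,\textbf{e})\to R(K)$ and $r(K_i)=\min_{\textbf{e}}u(K_i,\textbf{e})\to r(K)$. Because $K\in\mathcal{K}_o^h$ contains $O$ in its interior we have $r(K)>0$, so the denominators $r(K_i)e^{r(K_i)}$ stay bounded away from zero while the numerators $e^{2R(K_i)}$ stay bounded above; hence $C_i$ is uniformly bounded for large $i$. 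Together with $\epsilon_i\to 0$, this delivers $\rho(K_i,\cdot)\to\rho(K,\cdot)$ uniformly and completes the cycle.
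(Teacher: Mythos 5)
Your proposal is correct, and two of its three links coincide with the paper's own proof. The equivalence (i) $\Leftrightarrow$ (ii) is read off from Corollary \ref{Cor3.2} exactly as in the paper, and your proof of (i) $\Rightarrow$ (iii) is the paper's argument: Lemma \ref{Lem3.4} applied to both $K$ and $K_i$, with the constant $C(r(K_i),R(K_i))$ kept uniformly bounded for large $i$. (The paper controls $R(K_i)$ and $r(K_i)$ via the inclusions $K_i\subset K^{\epsilon}$, $K\subset K_i^{\epsilon}$ and \eqref{3.8}, whereas you use \eqref{3.7} together with the already-established convergence of support functions; these are the same estimate in different clothing.) The one genuine divergence is the closing link. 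The paper proves (iii) $\Rightarrow$ (i) directly and geometrically: if $|\rho(K_i,\theta)-\rho(K,\theta)|<\epsilon$ for all $\theta$, then the radial boundary points satisfy $d(X,X_i)<\epsilon$, hence $\partial K_i\subset K^{\epsilon}$ and $\partial K\subset K_i^{\epsilon}$, giving $d_{\mathcal{H}}(K_i,K)<\epsilon$ by \eqref{3.4}. You instead prove (iii) $\Rightarrow$ (ii) analytically, through the representation $u(K,\textbf{e})=\sup_{\theta}\log\left(\cosh\rho(K,\theta)-\sinh\rho(K,\theta)\,\theta\cdot\textbf{e}\right)$ stated before \eqref{3.7}, a modulus-of-continuity bound on the compact range of radii (finite under (iii) since $R(K_i)\to R(K)$), and the elementary inequality $|\sup_{\theta}A-\sup_{\theta}B|\le\sup_{\theta}|A-B|$. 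Both close the cycle correctly. The paper's route is shorter and purely metric, though it silently uses that $\partial K_i\subset K^{\epsilon}$ forces $K_i\subset K^{\epsilon}$ (star-shapedness about $O$ and convexity of the outer parallel body); your route avoids that geometric point, and if one upgrades uniform continuity to the Lipschitz bound $\left|\partial_{\rho}\log(\cosh\rho-\sinh\rho\,\theta\cdot\textbf{e})\right|\le e^{2M}$ on $[0,M]$, it even yields the quantitative estimate $\|u(K_i,\cdot)-u(K,\cdot)\|_{\infty}\le e^{2M}\|\rho(K_i,\cdot)-\rho(K,\cdot)\|_{\infty}$, at the cost of invoking the sup-representation of the support function.
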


\begin{proof}
	The equivalence between the statements (i) and (ii) follows from (\ref{3.5}). It suffices to show that statements (i) and (iii) are equivalent.
	
	We first prove that statement (i) implies statement (iii). Let $\{K_i\}_{i=1}^{\infty} \subset \mathcal{K}_o^h$ be a sequence of h-convex bodies that converges to $K \in \mathcal{K}_o^h$ in Hausdorff metric as $i \to \infty$. For any given $0<\epsilon<\min\left\{1,\frac{r(K)}{2}\right\}$, we have  $d_{\mathcal{H}}(K_i,K)<\epsilon$, i.e. $K_i\subset K^\epsilon$ and $K\subset K_i^\epsilon$ for large $i$. This together with (\ref{3.8}) gives
	\begin{equation}\label{3.15}
	\begin{aligned}
	&R(K_i)\leq R(K^\epsilon) =  R(K)+\epsilon\le R(K)+1, \\\ 
	&r(K_i)= r(K_i^\epsilon) -\epsilon \ge r(K)-\epsilon\ge \frac{r(K)}{2}.
	\end{aligned}
	\end{equation}
	Define the function $C: \mathbb{R}^+ \times \mathbb{R} \to \mathbb{R}$ by
	\begin{equation*}
	C(r,R)=\frac{2e^{2R}}{r e^r}.
	\end{equation*}
	Clearly, the function $C(r, R)$ is monotone increasing in $R$ and decreasing in $r$. Using (\ref{3.9}) and (\ref{3.15}), for any $\theta \in \mathbb{S}^n$ we have
	\begin{align*}
	\rho(K_i,\theta) \leq \rho(K^\epsilon, \theta)\le \rho(K,\theta)+C(r(K),R(K))\epsilon,
	\end{align*}
	and
	\begin{align*}
	\rho(K,\theta)
	\leq \rho (K_i^\epsilon, \theta) \leq
	\rho(K_i,\theta)+C\left(r(K_i),R(K_i)\right)\epsilon
	\le\rho(K_i,\theta)+C\left(\frac{r(K)}{2},R(K)+1\right)\epsilon.
	\end{align*}
	Taking $\epsilon \to 0^+$ in the above two inequalities, we have that $|\rho(K_i,\cdot)-\rho(K,\cdot)|$ converges uniformly to $0$ as $i \to \infty$, which is the statement (iii). Thus we obtain that statement (i) implies statement (iii).
	
	Next, we prove that statement (iii) implies statement (i). Let $\{K_i\}_{i=1}^\infty \subset \mathcal{K}_o^h$ be a sequence of h-convex bodies such that $\rho(K_i,\cdot)$ converges uniformly to $\rho(K,\cdot)$ as $i \to \infty$. Given any $\epsilon>0$, we have  $|\rho(K_i,\theta)-\rho(K,\theta)|<\epsilon$ for each $\theta\in\mathbb{S}^n$ by choosing $i$ large enough. Denote by $X=(\sinh\rho(K,\theta)\theta,\cosh\rho(K,\theta))$ and $X_i=(\sinh\rho(K_i,\theta)\theta,\cosh\rho(K_i,\theta))$ the boundary points in direction $\theta$ of $K$ and $K_i$, respectively. 
	Then
	\begin{align*}
	d(X,X_i) =|\rho(K_i,\theta)-\rho(K,\theta)|<\epsilon.
	\end{align*}
	Hence we have $\partial K_i \subset K^{\epsilon}$ and $\partial K \subset K_i^\epsilon$, which induces  $d_{\mathcal{H}}(K_i,K)<\epsilon$ by (\ref{3.4}). Therefore, $K_i$ converges to $K$ in Hausdorff metric as $i \to \infty$, which is the statement (i). Thus we obtain that statement (iii) implies statement (i).
	
	We complete the proof of Theorem \ref{Thm3.5}.
\end{proof}

According to the facts that the volume of $K\in\mathcal{K}_o^h$ can be expressed as
\begin{equation*}
V(K)=\int_{\mathbb{S}^n}\int_0^{\rho(K,\theta)}\sinh^n r dr d\theta
\end{equation*}
and that the volume functional is invariant under isometries, Theorem \ref{Thm3.5} implies:

\begin{cor}\label{Cor3.6}
	The volume functional $V(\cdot)$ in hyperbolic space is continuous for h-convex bodies under the Hausdorff metric.
\end{cor}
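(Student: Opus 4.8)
The plan is to deduce Corollary \ref{Cor3.6} directly from the equivalence established in Theorem \ref{Thm3.5}, using the integral representation of the volume and the invariance of volume under isometries. The volume functional for $K\in\mathcal{K}_o^h$ is given by
\begin{equation*}
V(K)=\int_{\mathbb{S}^n}\int_0^{\rho(K,\theta)}\sinh^n r\, dr\, d\theta,
\end{equation*}
so it is an explicit functional of the radial function $\rho(K,\cdot)$. The key idea is that Theorem \ref{Thm3.5} tells us that Hausdorff convergence $K_i\to K$ is equivalent to uniform convergence $\rho(K_i,\cdot)\to\rho(K,\cdot)$ on $\mathbb{S}^n$, so it suffices to show that the above integral depends continuously on $\rho$ in the uniform topology.

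First I would reduce to the case $K\in\mathcal{K}_o^h$. For a general h-convex body $K\in\mathcal{K}^h$, it may not contain the origin, but since the volume functional is invariant under isometries, I can apply a fixed isometry $\phi$ of $\mathbb{H}^{n+1}$ sending an interior point of $K$ to the origin $O$; the same isometry applied to a Hausdorff-convergent sequence $K_i\to K$ yields $\phi(K_i)\to\phi(K)$ in the Hausdorff metric (isometries preserve $d_{\mathcal H}$), and for large $i$ the bodies $\phi(K_i)$ lie in $\mathcal{K}_o^h$ as well. Thus continuity for bodies in $\mathcal{K}_o^h$ implies continuity in general.

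Next, given a sequence $K_i\to K$ in $\mathcal{K}_o^h$, Theorem \ref{Thm3.5} gives that $\rho(K_i,\cdot)\to\rho(K,\cdot)$ uniformly. Writing $G(t)=\int_0^t\sinh^n r\, dr$, which is a continuous (indeed $C^1$) increasing function of $t$, the volume becomes $V(K)=\int_{\mathbb{S}^n}G(\rho(K,\theta))\, d\theta$. Since $\rho(K_i,\cdot)\to\rho(K,\cdot)$ uniformly, the radial functions are uniformly bounded, say by some $R_0$, for all large $i$; on the compact interval $[0,R_0]$ the function $G$ is uniformly continuous, so uniform convergence of $\rho(K_i,\cdot)$ forces uniform convergence of $G(\rho(K_i,\cdot))$ to $G(\rho(K,\cdot))$ on $\mathbb{S}^n$. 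Integrating over the compact measure space $\mathbb{S}^n$ (of finite volume) then yields $V(K_i)\to V(K)$, which is the claimed continuity.

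There is no serious obstacle here, as the statement is essentially a soft consequence of Theorem \ref{Thm3.5} once the radial representation is in place; the only point requiring a little care is the reduction via isometries so that the radial-function machinery, which is defined only for bodies containing the origin in their interior, can be applied. The routine analytic step of passing uniform convergence through the continuous map $G$ and then through integration over $\mathbb{S}^n$ is the heart of the argument and presents no difficulty.
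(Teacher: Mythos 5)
Your proposal is correct and follows essentially the same route as the paper: the paper likewise derives Corollary \ref{Cor3.6} from the radial integral representation $V(K)=\int_{\mathbb{S}^n}\int_0^{\rho(K,\theta)}\sinh^n r\,dr\,d\theta$, the equivalence of Hausdorff convergence and uniform convergence of radial functions in Theorem \ref{Thm3.5}, and isometry invariance to reduce to bodies in $\mathcal{K}_o^h$. Your write-up simply spells out the routine analytic step (uniform continuity of $G$ on a compact interval and integration over $\mathbb{S}^n$) that the paper leaves implicit.
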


\begin{defn}
	We say that a sequence of h-convex bodies $\{K_i\}_{i=1}^{\infty}$ is \textit{bounded} if they are all contained in a geodesic ball of fixed radius, i.e., there exists a constant $M>0$ such that
	$$\max\limits_{X\in K_i}d(O,X)\le M$$
	holds for all $i \geq 1$. 
\end{defn}

In Euclidean space, the Blaschke selection theorem says that every bounded sequence of compact convex sets in $\mathbb{R}^{n+1}$ has a subsequence that converges to a compact convex set.
Now we prove a selection theorem in hyperbolic space.

\begin{thm}\label{Thm3.8}
	Let $\{K_i\}_{i=1}^{\infty}$ be a bounded sequence of h-convex bodies in $\mathbb{H}^{n+1}$. Then there exists a subsequence $\{K_{i_k}\}$ such that $u(K_{i_k},\cdot)$ converges uniformly to a continuous function. 
	
	Moreover, suppose that $\{K_i\}_{i=1}^{\infty}$ is a bounded sequence of h-convex polytopes in $\mathcal{P}_o^h(\textbf{e}_1,\ldots,\textbf{e}_m)$ given by $K_i=\bigcap\limits_{j=1}^m\overline{B}_{\textbf{e}_j}(x_{i,j})$, where $\{x_{i,j}\}$ has a positive uniform lower bound for all $i,j$. Then there exists a subsequence $\{K_{i_k}\}$ that converges to a h-convex polytope $K \in \mathcal{P}_o^h(\textbf{e}_1,\ldots,\textbf{e}_m)$ in Hausdorff metric.
	
\end{thm}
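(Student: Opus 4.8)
The plan is to handle the two assertions in turn: the first is an application of the Arzelà--Ascoli theorem to the horospherical support functions, and the second is bootstrapped from it. For the first assertion, let $M>0$ satisfy $K_i\subset\hat{B}_M(O)$ for all $i$. If $X=(\sinh r\,\theta,\cosh r)$ with $r\le M$, then $-X\cdot(\textbf{e},1)=\cosh r-\sinh r\,\theta\cdot\textbf{e}\in[e^{-M},e^M]$, so by \eqref{2.4} one gets $|f_\textbf{e}(X)|\le M$ and hence $|u(K_i,\textbf{e})|\le M$; this is the uniform bound. For equicontinuity I would estimate, for $\textbf{e},\textbf{e}'\in\mathbb{S}^n$ and such $X$,
\[
|f_\textbf{e}(X)-f_{\textbf{e}'}(X)|\le e^{M}\,\big|\sinh r\,\theta\cdot(\textbf{e}-\textbf{e}')\big|\le e^M\sinh M\,|\textbf{e}-\textbf{e}'|,
\]
using the mean value theorem for $\log$ with the lower bound $e^{-M}$ on its arguments. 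Since $|\sup_X f_\textbf{e}-\sup_X f_{\textbf{e}'}|\le\sup_X|f_\textbf{e}-f_{\textbf{e}'}|$, the family $\{u(K_i,\cdot)\}$ is uniformly Lipschitz, hence equicontinuous, and Arzelà--Ascoli produces a subsequence $u(K_{i_k},\cdot)$ converging uniformly to a continuous function $u_\infty$.

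For the second assertion I would apply the first part to the polytopes $K_i\in\mathcal{P}_o^h(\textbf{e}_1,\dots,\textbf{e}_m)$ to extract a subsequence with $u(K_{i_k},\cdot)\to u_\infty$ uniformly. Since $K_{i_k}\in\mathcal{P}_o^h(\textbf{e}_1,\dots,\textbf{e}_m)$ gives $x_{i_k,j}=u(K_{i_k},\textbf{e}_j)$, the limits $x_j^*:=u_\infty(\textbf{e}_j)=\lim_k x_{i_k,j}$ satisfy $x_j^*\ge\delta>0$, where $\delta$ is the assumed uniform lower bound. I then define the candidate $K:=\bigcap_{j=1}^m\overline{B}_{\textbf{e}_j}(x_j^*)$. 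As each $x_j^*\ge\delta>0$, the origin is interior to every $B_{\textbf{e}_j}(x_j^*)$, so $O\in\mathrm{int}\,K$. The whole problem now reduces to proving $u(K,\cdot)=u_\infty$: granting this, Corollary \ref{Cor3.2} gives $d_{\mathcal{H}}(K_{i_k},K)=\max_{\textbf{e}}|u(K_{i_k},\textbf{e})-u_\infty(\textbf{e})|\to0$, and from $u(K,\textbf{e}_j)=u_\infty(\textbf{e}_j)=x_j^*$ we recover $K=\bigcap_j\overline{B}_{\textbf{e}_j}(u(K,\textbf{e}_j))\in\mathcal{P}_o^h(\textbf{e}_1,\dots,\textbf{e}_m)$.

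To identify $u(K,\cdot)$ with $u_\infty$ I would establish two inequalities. The bound $u(K,\textbf{e})\ge u_\infty(\textbf{e})$ is the easy one: picking $X_k\in K_{i_k}$ with $f_\textbf{e}(X_k)=u(K_{i_k},\textbf{e})$, these lie in $\hat{B}_M(O)$, so along a further subsequence $X_k\to X_*$; passing to the limit in $f_{\textbf{e}_j}(X_k)\le x_{i_k,j}$ yields $X_*\in K$ with $f_\textbf{e}(X_*)=u_\infty(\textbf{e})$. The reverse inequality is the main obstacle, because the naive set comparison fails: $\bigcap_j\overline{B}_{\textbf{e}_j}(x_j^*+\epsilon)$ is in general strictly larger than the $\epsilon$-parallel body of $K$, so one cannot compare $K_{i_k}$ with $K$ just by inflating constraints. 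Here I would exploit the strict convexity of horoballs together with the positivity $x_j^*\ge\delta$. Given $X\in K$ with radial parametrization $X_s=(\sinh(s\rho)\theta,\cosh(s\rho))$, $s\in[0,1]$, each $X_s$ with $s<1$ lies on the open geodesic segment from $O\in\mathrm{int}\,K$ to $X\in K$, hence in $\mathrm{int}\,B_{\textbf{e}_j}(x_j^*)$ for every $j$; thus $\eta:=\min_j\big(x_j^*-f_{\textbf{e}_j}(X_s)\big)>0$, and for $k$ large $x_{i_k,j}>x_j^*-\eta\ge f_{\textbf{e}_j}(X_s)$, so $X_s\in K_{i_k}$ and $f_\textbf{e}(X_s)\le u(K_{i_k},\textbf{e})\to u_\infty(\textbf{e})$. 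Letting $s\to1^-$ gives $f_\textbf{e}(X)\le u_\infty(\textbf{e})$, and taking the supremum over $X\in K$ yields $u(K,\textbf{e})\le u_\infty(\textbf{e})$. Finally, this same shrinking argument delivers compactness of $K$: applying it with $\textbf{e}$ chosen as the radial direction of an arbitrary $X\in K$, the points $X_s\in K_{i_k}\subset\hat{B}_M(O)$ force $s\rho\le M$, hence $\rho\le M$, so $K\subset\hat{B}_M(O)$ is a genuine h-convex body. This completes the plan.
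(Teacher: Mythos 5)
Your first part follows the paper's own argument: uniform boundedness of $u(K_i,\cdot)$ from $K_i\subset\hat{B}_M(O)$, a Lipschitz equicontinuity estimate, and Arzel\`a--Ascoli. (Your bound via $|\sup_X f_{\textbf{e}}-\sup_X f_{\textbf{e}'}|\le\sup_X|f_{\textbf{e}}-f_{\textbf{e}'}|$ is a mild streamlining of the paper's comparison at a maximizing point; both yield a Lipschitz constant of order $e^{2M}$.)

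For the second part you take a genuinely different route. The paper converts to radial functions: it writes $\rho(K_{i_k},\theta)=\min_{1\le j\le m}\rho\left(\overline{B}_{\textbf{e}_j}(x_{i_k,j})\cap\hat{B}_R(O),\theta\right)$, deduces uniform convergence of $\rho(K_{i_k},\cdot)$ to $\rho(K,\cdot)$ from convergence of the parameters, and then invokes the equivalence of radial and Hausdorff convergence (Theorem \ref{Thm3.5}). You instead identify the limit support function directly: defining $K$ from the limiting parameters, you prove $u(K,\cdot)=u_\infty$ by two inequalities --- the easy one by compactness of maximizing points, the hard one by sliding a point of $K$ along the radial geodesic toward $O$ and using geodesic convexity of horoballs together with finiteness of the family to produce a uniform margin $\eta>0$ --- and then Hausdorff convergence is immediate from Corollary \ref{Cor3.2}. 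This bypasses Theorem \ref{Thm3.5} and the radial-function machinery entirely; what it costs is the interior-segment argument, and what it buys is a more self-contained proof that also exhibits the limit body explicitly as the Wulff shape of the limit parameters. Both arguments are sound.

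One step of yours needs repair. You assert that $K_{i_k}\in\mathcal{P}_o^h(\textbf{e}_1,\ldots,\textbf{e}_m)$ forces $x_{i_k,j}=u(K_{i_k},\textbf{e}_j)$. That is false as stated: membership in this class only says that $K_{i_k}$ \emph{admits} the canonical representation by its support values, not that the given parameters are those values; a redundant horo-ball gives $x_{i,j}>u(K_i,\textbf{e}_j)$, and then $\lim_k x_{i_k,j}$ need not exist at all (the $x_{i,j}$ are not even bounded above by the hypotheses). The fix is the normalization the paper itself performs as a ``without loss of generality'': from $K_i\subset\overline{B}_{\textbf{e}_j}(u(K_i,\textbf{e}_j))\subset\overline{B}_{\textbf{e}_j}(x_{i,j})$ one gets $K_i=\bigcap_{j=1}^m\overline{B}_{\textbf{e}_j}(u(K_i,\textbf{e}_j))$, so you may replace each $x_{i,j}$ by $u(K_i,\textbf{e}_j)$; and the lower bound survives this replacement, since $x_{i,j}\ge\delta$ for all $j$ gives $K_i\supset\bigcap_{j}\overline{B}_{\textbf{e}_j}(\delta)\supset\hat{B}_\delta(O)$, whence $u(K_i,\textbf{e}_j)\ge u(\hat{B}_\delta(O),\textbf{e}_j)=\delta$ by monotonicity of support functions under inclusion. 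With $x_j^*:=u_\infty(\textbf{e}_j)$ understood this way, every subsequent step of yours (the limit $f_{\textbf{e}_j}(X_k)\le u(K_{i_k},\textbf{e}_j)\to x_j^*$, the margin argument, and $x_j^*\ge\delta$) goes through verbatim.
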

\begin{proof}
	Since the sequence $\{K_i\}_{i=1}^{\infty}$ is bounded, there exists a constant $M>0$ such that $\max\limits_{X\in K_i} d(O,X)\le M$ for all $i \geq 1$.
	
	First, we show that $\{u(K_i,\cdot) \}_{i=1}^{\infty}$ is uniformly bounded. Fix some $i \geq 1$.
	Note that each $X\in K_i $ can be represented as $X=(\sinh r_X \theta_X,\cosh r_X)$, where $r_X=d(O,X)\le M$ and $\theta_X\in\mathbb{S}^n$. By (\ref{2.4}), for any $\textbf{e}\in\mathbb{S}^n$ we have
	\begin{align*}
	u(K_i,\textbf{e}) 
	=\sup\limits_{X\in K_i}\log(-X\cdot(\textbf{e},1)) 
	=\sup\limits_{X\in K_i}\log(\cosh r_X-\sinh r_X \theta_X\cdot \textbf{e}).
	\end{align*}
	Then
	\begin{align*}
	u(K_i,\textbf{e})
	\le \sup\limits_{X\in K_i}\log(\cosh r_X+\sinh r_X)
	=\sup\limits_{X\in K_i} r_X\le M,
	\end{align*}
	and
	\begin{align*}
	u(K_i,\textbf{e})
	\ge \sup\limits_{X\in K_i}\log(\cosh r_X-\sinh r_X)
	=-\inf\limits_{X\in K_i} r_X\ge -M.
	\end{align*}
	This implies that $\{u(K_i,\cdot) \}_{i=1}^{\infty}$ is uniformly bounded on $\mathbb{S}^n$.

	Next, we show that $\{u(K_i,\cdot)\}_{i=1}^{\infty}$ is equicontinuous. For any integer $i\ge1$ and
 $\textbf{e}_1\in \mathbb{S}^n$,
 there exists $X_i\in K_i$ such that
	\begin{equation}\label{3.16}
	u(K_i,\textbf{e}_1)=\log(-X_i\cdot(\textbf{e}_1,1)).
	\end{equation}
	Denote $X_i=(\sinh r_i\theta_i,\cosh r_i)$, where $ r_i=d(O,X_i)\le M$ and $ \theta_i\in \mathbb{S}^n$. 
 For any $\textbf{e}_2\in \mathbb{S}^n$, we have by (\ref{2.4})
	\begin{equation}\label{3.17}
	u(K_i,\textbf{e}_2)\ge\log(-X_i\cdot(\textbf{e}_2,1)).
	\end{equation}
	It follows from (\ref{3.16}) and (\ref{3.17}) that
	\begin{align*}
	u(K_i,\textbf{e}_1)-u(K_i,\textbf{e}_2)
	\le& \log(-X_i\cdot(\textbf{e}_1,1))-\log(-X_i\cdot(\textbf{e}_2,1))\\
	=&\log\frac{\cosh r_i-\sinh r_i \theta_i\cdot \textbf{e}_1}{\cosh r_i-\sinh r_i \theta_i\cdot \textbf{e}_2}\\
	\le& \max\left\{0,\ \frac{\sinh r_i \theta_i\cdot(\textbf{e}_2-\textbf{e}_1)}{\cosh r_i-\sinh r_i\theta_i\cdot \textbf{e}_2}\right\}\\
	\le& \max\left\{0,\ \frac{e^{2r_i}-1}{2}|\textbf{e}_2-\textbf{e}_1|\right\}\\
	\le& \frac{e^{2M}-1}{2}|\textbf{e}_1-\textbf{e}_2|,
	\end{align*}
	where we used $\log(1+t) \leq t$ for $t \geq 0$ in the second inequality. Thus we obtain
	\begin{equation*}
	|u(K_i,\textbf{e}_1)-u(K_i,\textbf{e}_2)|\le\frac{e^{2M}-1}{2}|\textbf{e}_1-\textbf{e}_2|,\quad i=1,2,\ldots,
	\end{equation*}
	for all $\textbf{e}_1,\textbf{e}_2\in \mathbb{S}^n$. Consequently, $\{u(K_i,\cdot)\}_{i=1}^{\infty}$ is equicontinuous on $\mathbb{S}^n$. 
	
	It follows from the Arzel\`a-Ascoli theorem that there exists a subsequence $u(K_{i_k},\cdot)$ that converges uniformly on $\mathbb{S}^n$ to a continuous function $u_0(\cdot)$. This completes the proof of the first statement of Theorem \ref{Thm3.8}.
	
	
	In addition, if $K_i$ has a discrete structure $\bigcap\limits_{j=1}^m \overline{B}_{\textbf{e}_j}(x_{i,j})$ which can be determined by a vector $(x_{i,j})_{1\le j\le m}$ in $\mathbb{R}^m$.
 Without loss of generality, we may assume $x_{i,j} \leq u(K_i, \textbf{e}_j)$.
 Then the above argument implies that there exists a subsequence $\{K_{i_k}\}$ such that the vector $(x_{i_k, j})$ converges to $(x_j)$ as $k\to\infty$. Then $K=\bigcap\limits_{j=1}^m\overline{B}_{\textbf{e}_j}(x_{j})$ is a h-convex polytope in $\mathcal{P}_o^h(\textbf{e}_1,\ldots,\textbf{e}_m)$ by assumption.
	
	For sufficiently large $k$, there exists a geodesic ball $\hat{B}_R(O)$ such that it contains all $K_{i_k}$ and $K$. Notice that $K_{i_k}=\bigcap\limits_{j=1}^m \left(\overline{B}_{\textbf{e}_j}(x_{i_k,j})\cap \hat{B}_R(O)\right)$, then its radial function is given by
	$$\rho(K_{i_k},\theta)=\min\limits_{1\le j\le m}\rho(\overline{B}_{\textbf{e}_j}(x_{i_k,j})\cap \hat{B}_R(O),\theta),\quad\forall\ \theta\in\mathbb{S}^n,$$
	where $\rho(\overline{B}_{\textbf{e}_j}(x_{i_k,j})\cap \hat{B}_R(O),\theta)=\min\{\rho(\overline{B}_{\textbf{e}_j}(x_{i_k,j}),\theta),R\}$ is a continuous bounded function. Since $x_{i_k,j}$ converges to $x_j$ as $k\to\infty$, we have that $\rho(\overline{B}_{\textbf{e}_j}(x_{i_k,j})\cap \hat{B}_R(O),\cdot)$ converges uniformly on $\mathbb{S}^n$ to $\rho(\overline{B}_{\textbf{e}_j}(x_{j})\cap \hat{B}_R(O),\cdot)$ 
 for all $j=1,\ldots,m$. Moreover, $\rho(K_{i_k},\cdot)$ converges uniformly on $\mathbb{S}^n$ to $\rho(K,\cdot)$. Finally, by applying Theorem \ref{Thm3.5}, we conclude that $K_{i_k}$ converges to $K$ in Hausdorff metric. We complete the proof of Theorem \ref{Thm3.8}.
\end{proof}



\begin{defn}
	The \textit{h-convex hull} of a set $A$ in $\mathbb{H}^{n+1}$ is the intersection of all h-convex sets containing $A$.
\end{defn}

It is clear that the h-convex hull of $A$ contains all horocycle arcs joining points in $A$, and the h-convex hull of finite points in $\mathbb{H}^2$ is a h-convex polytope. However, the h-convex hull of finite points in $\mathbb{H}^{n+1}$ is not a h-convex polytope when $n\ge2$, since there exist exactly two horocycle arcs joining two points in $\mathbb{H}^{2}$, and there exist infinitely many horocycle arcs joining two points in $\mathbb{H}^{n+1}$ for $n\ge 2$.

Using a similar argument as the case in Euclidean space, we note that a h-convex body in $\mathbb{H}^2$ can be approximated by a sequence of h-convex polytopes.

\begin{thm}\label{Thm3.10}
	Let $K$ be a h-convex body in $\mathbb{H}^{2}$. Then there exists a sequence of h-convex polytopes ${P_i}$ such that $d_{\mathcal{H}}(K,P_i)\to 0$ as $i\to\infty$.
\end{thm}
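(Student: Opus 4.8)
The plan is to approximate $K$ from the outside by intersecting finitely many of its own supporting horo-balls, taken in directions that become dense in $\mathbb{S}^1$. Since Corollary \ref{Cor3.2} reduces the Hausdorff distance to the sup-norm distance of horospherical support functions, it suffices to arrange that these support functions converge uniformly. Concretely, I would set $M=\max_{X\in K}d(O,X)<\infty$, and for $\delta\in(0,1]$ choose a $\delta$-net $\{\textbf{e}_1,\ldots,\textbf{e}_m\}\subset\mathbb{S}^1$ (every point of $\mathbb{S}^1$ lies within Euclidean distance $\delta$ of some $\textbf{e}_i$). I then define
\[
P_\delta=\bigcap_{i=1}^m\overline{B}_{\textbf{e}_i}\big(u(K,\textbf{e}_i)\big),
\]
which is an h-convex polytope: each closed horo-ball is h-convex and intersections of h-convex sets are h-convex. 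By construction $K\subset P_\delta$, so $u(K,\textbf{e})\le u(P_\delta,\textbf{e})$ for all $\textbf{e}$; and since $P_\delta\subset\overline{B}_{\textbf{e}_i}(u(K,\textbf{e}_i))$ we also get $u(P_\delta,\textbf{e}_i)\le u(K,\textbf{e}_i)$, so $u(P_\delta,\textbf{e}_i)=u(K,\textbf{e}_i)$ at every net direction.

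The Lipschitz estimate obtained in the proof of Theorem \ref{Thm3.8} gives $|u(K,\textbf{e})-u(K,\textbf{e}')|\le \tfrac12(e^{2M}-1)|\textbf{e}-\textbf{e}'|$, and I will need the same bound for $P_\delta$ with a constant independent of $\delta$. This is where the only genuinely geometric work lies, and I expect the uniform boundedness of the family $\{P_\delta\}$ to be the main obstacle. I claim $P_\delta\subset \hat{B}_{M+1}(O)$ for every $\delta\le 1$. Indeed, if $X=(\sinh r\,\theta,\cosh r)\in P_\delta$, I would pick a net direction $\textbf{e}_i$ with $|\textbf{e}_i+\theta|<\delta$, so that $\theta\cdot\textbf{e}_i\le -1+\delta$; then from $X\in\overline{B}_{\textbf{e}_i}(u(K,\textbf{e}_i))$ and $u(K,\textbf{e}_i)\le M$,
\[
e^r\Big(1-\tfrac{\delta}{2}\Big)\le \cosh r-\sinh r\,\theta\cdot\textbf{e}_i=-X\cdot(\textbf{e}_i,1)\le e^{M},
\]
which forces $r\le M+\log\frac{1}{1-\delta/2}\le M+1$. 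Hence the same estimate applies to $P_\delta$ with constant $L'=\tfrac12(e^{2(M+1)}-1)$, uniform in $\delta$; write $L=\tfrac12(e^{2M}-1)$ for $K$.

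With the two uniform Lipschitz bounds in hand, the conclusion is immediate: for arbitrary $\textbf{e}\in\mathbb{S}^1$ I pick a net direction $\textbf{e}_i$ with $|\textbf{e}-\textbf{e}_i|<\delta$ and estimate, using $u(P_\delta,\textbf{e}_i)=u(K,\textbf{e}_i)$,
\[
0\le u(P_\delta,\textbf{e})-u(K,\textbf{e})\le |u(P_\delta,\textbf{e})-u(P_\delta,\textbf{e}_i)|+|u(K,\textbf{e}_i)-u(K,\textbf{e})|\le (L+L')\delta .
\]
By Corollary \ref{Cor3.2} this says $d_{\mathcal H}(K,P_\delta)\le (L+L')\delta$, so taking $\delta=\delta_i\to0$ and $P_i:=P_{\delta_i}$ gives $d_{\mathcal H}(K,P_i)\to0$. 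Each $P_\delta$ is compact by the boundedness just proved and has nonempty interior since it contains $K$, so it is indeed an h-convex polytope.

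I note that this outer-approximation argument uses nothing special about $\mathbb{H}^2$ and in fact works in every $\mathbb{H}^{n+1}$; the restriction to the plane is only relevant if one instead prefers the inner approximation by h-convex hulls of finitely many boundary points of $K$, which (as observed just above the statement) are h-convex polytopes only when $n=1$. Either route is viable, but the outer construction has the advantage of feeding directly into the support-function machinery of Corollary \ref{Cor3.2} and Theorem \ref{Thm3.8}, so I would carry out the proof along these lines.
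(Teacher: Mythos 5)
Your proof is correct, but it proceeds along a genuinely different route from the paper. The paper approximates $K$ \emph{from the inside}: it covers $K$ by finitely many geodesic balls $\hat{B}_{\epsilon}(X_i)$ centered at points $X_i\in K$, takes $P_\epsilon$ to be the h-convex hull of $\{X_i\}$ --- which is a h-convex polytope precisely because $n=1$ --- and concludes from $P_\epsilon\subset K\subset (P_\epsilon)^\epsilon$ that $d_{\mathcal{H}}(K,P_\epsilon)\le\epsilon$. You instead approximate \emph{from the outside}, intersecting the supporting horo-balls of $K$ over a $\delta$-net of directions and controlling the error through support functions via Corollary \ref{Cor3.2}; the two ingredients you supply (the uniform bound $P_\delta\subset\hat{B}_{M+1}(O)$, whose computation is accurate, and the resulting $\delta$-independent Lipschitz constant for $u(P_\delta,\cdot)$ taken from the proof of Theorem \ref{Thm3.8}) close the argument. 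The trade-off is clear: the paper's proof is three lines but is inherently two-dimensional, since the h-convex hull of finitely many points fails to be a h-convex polytope in $\mathbb{H}^{n+1}$ for $n\ge 2$ (as the paper notes just before the theorem), whereas your argument is more quantitative but, as you observe, uses nothing special about $\mathbb{H}^2$ and thus shows the dimension restriction in the statement is an artifact of the hull-based method rather than of the result itself. Your construction also meshes directly with the support-function machinery developed in Section \ref{sec:3}, at the modest cost of having to verify that each $P_\delta$ is indeed a h-convex body (h-convexity of finite intersections of closed horo-balls, compactness from your boundedness claim, nonempty interior from $K\subset P_\delta$), all of which you do or indicate correctly.
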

\begin{proof}
	For any $\epsilon>0$, there exist $X_1,\ldots,X_N\in K$ such that the geodesic balls $\{\hat{B}_{\epsilon}(X_i)\}$ cover $K$. Denote by $P_{\epsilon}$ the h-convex hull of $\{X_i\}_{i=1}^N$, and by $(P_{\epsilon})^{\epsilon}$ the outer parallel set of $P_{\epsilon}$ with distance $\epsilon$. Since $(P_{\epsilon})^{\epsilon}$ is a h-convex body containing all $\hat{B}_{\epsilon}(X_i)$, then $(P_{\epsilon})^{\epsilon}$ contains $K$. Therefore, the h-convex polytope $P_{\epsilon}$ satisfies $P_{\epsilon}\subset K\subset (P_{\epsilon})^{\epsilon}$ and $d_{\mathcal{H}}(K,P_{\epsilon})\le \epsilon$. Then Theorem \ref{Thm3.10} follows by taking $\epsilon \to 0^+$.
\end{proof}

\section{Horospherical Wulff shape and a variational formula of volume}\label{sec:4}

\begin{defn}\label{def-Wulff shape}
	Let $\omega$ be a closed subset of $\mathbb{S}^n$ that contains at least two elements, and let $g$ be a positive continuous function on $\mathbb{S}^n$. We define \textit{the horospherical Wulff shape associated with $(g,\omega)$} as
	\begin{align}\label{4.1}
	K_g=\bigcap\limits_{\textbf{e}\in \omega}\overline{B}_\textbf{e}(g(\textbf{e})).
	\end{align} 
	By (\ref{2.2}), we have
	\begin{equation*}
	\begin{split}
	K_g &=\bigcap\limits_{\textbf{e}\in \omega} \{ X\in\mathbb{H}^{n+1}:0> X\cdot(\textbf{e},1)\ge-\exp{g(\textbf{e})}\}\\
	&=\bigcap\limits_{\textbf{e}\in \omega} \{X\in\mathbb{H}^{n+1}:\ f_\textbf{e}(X)\le g(\textbf{e})\},
	\end{split}
	\end{equation*}
	where $f_\textbf{e}(X)=\log(-X\cdot(\textbf{e},1)).$
\end{defn}

It is worth noting that any h-convex body $K$ can be fully characterized by its horospherical support function $u(K,\textbf{e})$, i.e.
\begin{equation}\label{4.2}
K=\bigcap\limits_{\textbf{e}\in \mathbb{S}^n}\overline{B}_\textbf{e}(u(K,\textbf{e})).
\end{equation}
To prove this, it suffices to prove the following separating horosphere theorem in hyperbolic space. Denote by $d(\cdot, \cdot)$ the distance function in $\mathbb{H}^{n+1}$.

\begin{thm}\label{Thm4.2}
Let $K$ be a h-convex body in $\mathbb{H}^{n+1}$ and 
let $Q$ be a point outside $K$. Then there exists a horosphere $H_{\textbf{e}}(s)$ separating $K$ and $Q$, i.e. $K\subset\overline{B}_{\textbf{e}}(s)$ and $Q\notin \overline{B}_{\textbf{e}}(s)$.
\end{thm}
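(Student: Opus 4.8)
The plan is to separate the point $Q$ from the h-convex body $K$ by a horosphere, and the natural candidate is the supporting horo-ball of $K$ in the direction determined by the nearest-point projection of $Q$ onto $K$. First I would let $P \in \partial K$ be the (unique) closest point of $K$ to $Q$, which exists and is unique because $K$ is compact and convex in the geodesic sense (h-convexity implies geodesic convexity, as each supporting horo-ball is geodesically convex). By the definition of h-convexity, there is a supporting horo-ball $B_{\textbf{e}}(s)$ at $P$, meaning $K \subset \overline{B}_{\textbf{e}}(s)$ and $P \in H_{\textbf{e}}(s)$ with $s = u(K,\textbf{e})$. The crux is to show that this \emph{same} horosphere also excludes $Q$, i.e. $Q \notin \overline{B}_{\textbf{e}}(s)$.

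To make the geometry transparent, I would pass to the Poincar\'e ball (or upper half-space) model, where the horosphere $H_{\textbf{e}}(s)$ is a Euclidean sphere tangent to $\partial \mathbb{B}^{n+1}$ at $\textbf{e}$, and the horo-ball $\overline{B}_{\textbf{e}}(s)$ is the corresponding Euclidean ball. The key step is to argue that the geodesic $\gamma$ from $P$ to $Q$ leaves the horo-ball at $P$: since $P$ is the closest point of $K$ to $Q$, the initial geodesic direction from $P$ toward $Q$ must be \emph{outward} relative to $K$, and because $K \subset \overline{B}_{\textbf{e}}(s)$ with $P$ on the bounding horosphere, the horosphere $H_{\textbf{e}}(s)$ is tangent to $\partial K$ at $P$ and separates the outward direction from $K$. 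Concretely, I would compute the inner product $-(X - P)\cdot(\textbf{e},1)$ along $\gamma$: at $P$ this vanishes, and the first variation in the direction of $Q$ is strictly positive because otherwise $Q$ would admit a point of $K$ closer than $P$, contradicting minimality. This forces $f_{\textbf{e}}(Q) = \log(-Q \cdot (\textbf{e},1)) > s = u(K,\textbf{e})$, which by $(\ref{2.2})$ is exactly $Q \notin \overline{B}_{\textbf{e}}(s)$.

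The main obstacle I anticipate is the rigorous justification of the separation inequality $f_{\textbf{e}}(Q) > u(K,\textbf{e})$, rather than merely $\geq$. The inequality $\geq$ follows cleanly from the tangency of the supporting horo-ball and the fact that the geodesic from $P$ leaves $K$; promoting it to a strict inequality requires using that $Q$ lies strictly outside $K$ together with the strict convexity of horo-balls (horospheres have all principal curvatures equal to $1$, so they curve away from their tangent planes). A clean way to handle this is to work entirely with the function $f_{\textbf{e}}$: since $u(K,\textbf{e}) = \sup_{X \in K} f_{\textbf{e}}(X)$ by $(\ref{2.4})$, and since the level sets of $f_{\textbf{e}}$ are exactly the horospheres $H_{\textbf{e}}(\cdot)$ which foliate $\mathbb{H}^{n+1}$, I would show that the geodesic ray from $P$ through $Q$ is transverse to this foliation at $P$ and crosses into strictly higher level sets, using that $P$ minimizes distance to $Q$. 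An alternative, and perhaps cleaner, route avoiding delicate variational arguments is to choose $\textbf{e}$ so that $H_{\textbf{e}}(s)$ is the horosphere through $P$ tangent to the geodesic sphere $\partial \hat{B}_{d(P,Q)}(Q)$ at $P$; then monotonicity of $f_{\textbf{e}}$ along the geodesic directly gives strictness. Once this separation is established, formula $(\ref{4.2})$ follows immediately, since any point $Q \notin K$ is excluded by some $\overline{B}_{\textbf{e}}(u(K,\textbf{e}))$, while every point of $K$ lies in all of them by the definition $(\ref{2.3})$ of the support function.
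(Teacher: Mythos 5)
There is a genuine gap, and it sits exactly at the point you call ``the crux'': it is \emph{not} true that an arbitrary supporting horo-ball of $K$ at the foot point $P$ excludes $Q$, and the first-variation argument you sketch cannot be repaired, because minimality of $P$ constrains the location of $K$, not of the supporting horo-ball, which is in general a much larger set than $K$ and can wrap around $Q$. Concretely, in $\mathbb{H}^{n+1}$ let $d=d(P,Q)>0$, let $v$ be the unit tangent vector at $P$ pointing toward $Q$, let $B_1$ be the horo-ball whose boundary passes through $P$ with inward unit normal $-v$ at $P$ (this is the horo-ball externally tangent at $P$ to the geodesic ball $\hat{B}_d(Q)$, so $\overline{B}_1\cap\overline{\hat{B}_d(Q)}=\{P\}$), and let $B_2$ be the horo-ball through $P$ whose inward unit normal $u$ at $P$ satisfies $0<\angle(u,v)<\arccos(\tanh(d/2))$. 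Then $K:=\overline{B}_1\cap\overline{B}_2$ is a compact h-convex body with nonempty interior; since $K\subset\overline{B}_1$, the point $P$ is the unique nearest point of $K$ to $Q$; and $B_2$ is, by construction, a supporting horo-ball of $K$ at $P$ with $s=u(K,\textbf{e})$. Yet the hyperboloid-model computation (a point at distance $r$ from $P$ in direction $w$ lies in $\overline{B}_2$ if and only if $\cosh r-\sinh r\,\langle w,u\rangle\le 1$, i.e. $\langle w,u\rangle\ge\tanh(r/2)$) shows that $Q$ lies in the \emph{open} horo-ball $B_2$. In this example the first variation of $f_{\textbf{e}}$ at $P$ along the geodesic toward $Q$ equals $-\langle u,v\rangle<0$: the geodesic initially goes \emph{into} the supporting horo-ball, and no point of $K$ closer to $Q$ is produced, so the contradiction with minimality that you invoke simply is not there.

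Your closing ``alternative route''---taking $H_{\textbf{e}}(s)$ to be the horosphere through $P$ tangent to $\partial\hat{B}_{d(P,Q)}(Q)$ at $P$---is precisely the paper's choice of horosphere, but it shifts the burden to the other inclusion, and this is where the real work lies. For this specific horo-ball, $Q\notin\overline{B}_{\textbf{e}}(s)$ is the easy part (external tangency, or your monotonicity remark); the substantive claim is $K\subset\overline{B}_{\textbf{e}}(s)$, i.e.\ that this particular horo-ball actually supports $K$, and that does \emph{not} follow from the definition of h-convexity, which only provides \emph{some} supporting horo-ball at $P$ (possibly a ``bad'' one, as the example above shows). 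The paper proves this inclusion by contradiction, using the characterization of h-convexity via horocycle arcs: if some $P_0\in K$ lay outside this horo-ball, then a horocycle arc joining $P$ and $P_0$---which must be contained in $K$---would enter $\hat{B}_d(Q)$, producing a point $P^*\in K$ with $d(P^*,Q)<d(P,Q)$, contradicting the choice of $P$. This step is absent from your proposal, so as written the proof is incomplete.
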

\begin{proof}
	We work in the Poincar\'e ball model $(\mathbb{B}^{n+1}, g_B)$, see Figure \ref{Fig-4-1}. Up to an isometry, we may assume that $Q$ is the center of $\mathbb{B}^{n+1}$. It follows from the h-convexity of $K$ that there exists a unique point $P\in \partial K$ such that $d(P,Q)=d(K,Q)$. Let $\hat{B}$ be the geodesic ball of radius $d(Q,P)$ centered at $Q$, and let $B$ be the horo-ball tangent to $\hat{B}$ at $P$. We will show that the horosphere $\partial B$ separates $K$ and $Q$, i.e. $K\subset\overline{B}$ and $Q\notin\overline{B}$.
	
	We argue by contradiction. Suppose that there exists a point $P_0\in K\backslash\overline{B}$. Since $B$ is the unique horo-ball externally tangent to $\hat{B}$ at $P$, the intersection of $\hat{B}$ and the horo-ball containing $P$ and $P_0$ on the boundary has interior points.
 Thus there exists a point $P^*$ such that $P^*\in \hat{B}$, and $P^*$ lies on a horocycle arc connecting $P$ and $P_0$, thus $d(P^*, Q)< d(P,Q)$. By the h-convexity of $K$, we have $P^*\in K$. However, the definition of $P$ implies $d(P,Q) \leq d(P^*, Q)$, which is a contradiction. This completes the proof of Theorem \ref{Thm4.2}.

\end{proof}
\begin{figure}[htbp]
	\centering
	\begin{tikzpicture}[scale=0.5][>=Stealth] 
	\draw (0,0) circle (5);
	\draw (0,0) circle (2);
	\fill (0,0) circle (3pt);
	\node[below] at (0,0) {{\footnotesize{$Q$}}};
	\node[below] at (0,-2) {{\footnotesize{$\hat{B}$}}};
	
	\draw (3.5,0) circle (1.5);
	\fill (2,0) circle (3pt);
	\node[right] at (2,0) {{\footnotesize{$P$}}};
	\node[below] at (3.5,-1.5) {{\footnotesize{$B$}}};
	
	\draw (3.25,0.9986) circle (1.6);
	\fill (2,1.9975) circle (3pt);
	\node[above] at (2,1.9975) {{\footnotesize{$P_0$}}};
	
	\fill (1.7,0.602) circle (3pt);
	\node[above right] at (1.7,0.8) {{\footnotesize{$P^*$}}};
	
	\node[left] at (8,-3) {$\mathbb{B}^{n+1}$};
	\end{tikzpicture}
	\caption{Separating horosphere theorem in $\mathbb{H}^{n+1}$}
	\label{Fig-4-1}
\end{figure}
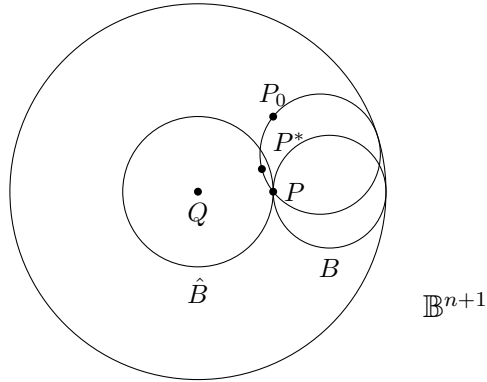

Back to Definition \ref{def-Wulff shape}. It is easy to see that $K_g$ is a h-convex body that contains $O=(\textbf{0},1)$ in its interior. Note that
\begin{equation}\label{4.3}
u_{K_g}(\textbf{e})\le g(\textbf{e}),\quad \forall\ \textbf{e}\in\omega.
\end{equation}

In particular, if $\omega\subset \mathbb{S}^n$ is a finite set $\{\textbf{e}_1,\ldots,\textbf{e}_m\}\ (m\ge2)$ and $x\in\mathbb{R}_{+}^m=\{(x_1,\ldots,x_m)\in\mathbb{R}^m:\ x_i>0,\ 1\le i\le m\}$, 
then the \textit{horospherical Wulff shape associated with $(\omega,x)$} is a h-convex polytope defined as
\begin{equation}\label{4.4}
P(\omega,x)
=\bigcap\limits_{i=1}^m\overline{B}_{\textbf{e}_i}(x_i).
\end{equation}

Conversely, any $P \in \mathcal{P}_o^h$ can be viewed as a horospherical Wulff shape associated with a discrete pair $(\omega,x)$.

$\ $

In the rest part of this section, we will derive a variational formula for h-convex polytopes in Lemma \ref{Lem4.6} below. 
The following local Steiner formula proved by Kohlmann \cite{koh91} will be used to prove Lemma \ref{Lem4.6}.

\begin{lem}[\cite{koh91}]\label{Lem4.3}
	Let $A$ be a geodesically convex set in $\mathbb{H}^{n+1}$. The map $f_A:\mathbb{H}^{n+1}\backslash A\to \partial A$ is defined by $ d(f_A(x),x)=d(K,x)$. For a bounded Borel set $B\subset \mathbb{H}^{n+1}$ and $\epsilon>0$, define
	\begin{equation*}
	\mathscr{P}_{\epsilon}(A,B)
	=f_A^{-1}(B)\cap (A^{\epsilon}\backslash A)
	=\{x\in \mathbb{H}^{n+1}|\ 0<d(A,x)\le\epsilon,\ f_A(x)\in B\}.
	\end{equation*}
	Then we have 
	\begin{equation}\label{4.5}
	V(\mathscr{P}_{\epsilon}(A,B))
	=\sum\limits_{k=0}^n l_{n+1-k}(\epsilon)\Phi_k(A,B),
	\end{equation}
	where
	\begin{align*}
	& l_{n+1-k}(t)
	=\int_0^t \cosh^k(\tau) \sinh^{n-k}(\tau) d\tau,
	\quad k=0,\ldots,n,
	\end{align*}
	and $\Phi_k(A,\cdot)$ is the $k$-th curvature measure of $A$ on Borel sets. 
\end{lem}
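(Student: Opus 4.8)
The plan is to prove the formula first for convex bodies with smooth, strictly convex boundary, where it is a direct tube computation, and then to pass to a general geodesically convex $A$ by approximation—which is where the curvature measures $\Phi_k(A,\cdot)$ are really produced.

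Assume for the moment that $\partial A$ is a smooth hypersurface with outward unit normal $\nu$ and principal curvatures $\kappa_1(p),\dots,\kappa_n(p)$. Geodesic convexity guarantees that the nearest-point projection $f_A$ is single-valued on $\mathbb{H}^{n+1}\setminus A$, so every $x$ with $0<d(A,x)\le\epsilon$ is written uniquely as $x=\exp_p(\tau\,\nu(p))$ with $p=f_A(x)\in\partial A$ and $\tau=d(A,x)\in(0,\epsilon]$. Hence $(p,\tau)\mapsto\exp_p(\tau\nu(p))$ is a diffeomorphism from $(\partial A\cap B)\times(0,\epsilon]$ onto $\mathscr{P}_\epsilon(A,B)$, and
\begin{equation*}
V(\mathscr{P}_\epsilon(A,B))=\int_{\partial A\cap B}\int_0^\epsilon \mathscr{J}(p,\tau)\,d\tau\,dA(p),
\end{equation*}
where $\mathscr{J}(p,\tau)$ is the Jacobian of the normal exponential map.

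The main computation is to evaluate $\mathscr{J}$. Along each outward normal geodesic $\gamma_p(\tau)=\exp_p(\tau\nu(p))$ the variation fields are Jacobi fields, and in constant curvature $-1$ the orthogonal Jacobi equation is $J''-J=0$; the field with initial data $(v,\kappa_i v)$ along a principal direction has length $(\cosh\tau+\kappa_i\sinh\tau)$, so $\mathscr{J}(p,\tau)=\prod_{i=1}^n(\cosh\tau+\kappa_i(p)\sinh\tau)$. Expanding this homogeneous degree-$n$ polynomial in $(\cosh\tau,\sinh\tau)$ by the elementary symmetric functions $e_j(\kappa)$ of the principal curvatures gives
\begin{equation*}
\prod_{i=1}^n(\cosh\tau+\kappa_i\sinh\tau)=\sum_{k=0}^n(\cosh\tau)^k(\sinh\tau)^{n-k}\,e_{n-k}(\kappa).
\end{equation*}
Integrating in $\tau$ turns the $k$-th $\tau$-factor into exactly $l_{n+1-k}(\epsilon)$, while $\int_{\partial A\cap B}e_{n-k}(\kappa)\,dA$ is, by definition, the $k$-th curvature measure $\Phi_k(A,B)$ in the smooth case; since the product has degree exactly $n$ there are no leftover terms, and \eqref{4.5} follows.

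For a general geodesically convex $A$ I would argue by approximation. The left-hand side $V(\mathscr{P}_\epsilon(A,B))$ still makes sense because $f_A$ is single-valued off a convex set, and one takes outer parallel bodies $A^{\delta_j}$ with $\delta_j\downarrow 0$: these decrease to $A$, are convex, and have $C^{1,1}$ boundary of reach $\ge\delta_j$, so the smooth identity (after a further mollification if one insists on $C^2$) applies to each. The coefficient functions $l_{n+1-k}(\cdot)$ are independent of $j$ and, having distinct leading orders $\tau^{\,n-k+1}$ at $\tau=0$, are linearly independent; therefore the decomposition of $V(\mathscr{P}_\epsilon(A^{\delta_j},B))$ determines the measures $\Phi_k(A^{\delta_j},\cdot)$ uniquely, and passing to the limit both defines $\Phi_k(A,\cdot)$ and preserves \eqref{4.5}. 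I expect the real difficulty to be concentrated exactly here: verifying that the collar volumes and the curvature measures converge, controlling $\Phi_k(A^{\delta_j},\cdot)$ uniformly in $j$ (a tightness and local-boundedness estimate), and checking that the limits are intrinsic to $A$ rather than to the chosen approximating sequence. This is the content of Kohlmann's construction of curvature measures in space forms, and it is the step that replaces the clean Jacobi-field calculation of the smooth case.
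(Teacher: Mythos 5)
A preliminary remark: the paper itself gives no proof of this lemma; it is imported wholesale from Kohlmann \cite{koh91}, together with the construction of the curvature measures $\Phi_k(A,\cdot)$. So your proposal can only be judged as a standalone argument, and there it splits cleanly: the smooth half is correct, the reduction of the general case to it has a genuine gap.

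The smooth-case tube computation is right and standard: on a Hadamard manifold the foot-point map of a closed convex set is single-valued, the normal exponential map is a diffeomorphism onto the collar, the Jacobi-field Jacobian is $\prod_{i=1}^n(\cosh\tau+\kappa_i\sinh\tau)$, and expanding into elementary symmetric functions and integrating in $\tau$ gives exactly $\sum_{k=0}^n l_{n+1-k}(\epsilon)\int_{\partial A\cap B}e_{n-k}(\kappa)\,dA$. This also matches the normalization the paper actually uses later: in the proof of Lemma \ref{Lem4.6} one needs $\Phi_n(P,\cdot)$ to be the facet area measure, i.e. the $e_0$ term.

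The gap is in the approximation step, and it is more basic than the tightness and well-definedness issues you flag. In this paper the lemma is applied to h-convex polytopes with $B=\mathring{F}(P,\textbf{e}_i)$ a facet interior, so the relevant Borel sets lie on $\partial A$ itself (indeed $f_A^{-1}(B)=f_A^{-1}(B\cap\partial A)$ always, so only $B\cap\partial A$ ever matters). But for your approximating bodies, $f_{A^{\delta_j}}$ takes values in $\partial A^{\delta_j}$, which is disjoint from $\partial A$; hence $\mathscr{P}_\epsilon(A^{\delta_j},B)=\emptyset$ for every $j$, and the decompositions you propose to pass to the limit are identically zero and define nothing. To repair this one must transport $B$ outward along the normal flow, replacing it by $B_j=\{x:\ d(A,x)=\delta_j,\ f_A(x)\in B\}$, verify the compatibility identity $\mathscr{P}_\epsilon(A^{\delta_j},B_j)=\mathscr{P}_{\delta_j+\epsilon}(A,B)\setminus\mathscr{P}_{\delta_j}(A,B)$, and then prove that the set functions $\Phi_k(A^{\delta_j},\cdot)$, read through this transport, converge to Borel measures on $\partial A$ that are independent of the sequence $\delta_j$. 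That existence-and-convergence statement is precisely Kohlmann's theorem; your final paragraph invokes it ("this is the content of Kohlmann's construction") rather than proving it, so as written the argument is circular at the only point where the lemma goes beyond the classical smooth tube formula. In short: correct and complete for smooth strictly convex bodies, but not yet a proof for the geodesically convex sets--in particular the polytopes--to which the paper applies it; the linear-independence observation about the functions $l_{n+1-k}$ is, however, the right mechanism for the uniqueness part of that missing step.
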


Furthermore, 
the curvature measures introduced by Kohlmann are weakly continuous with respect to the Hausdorff metric on $\mathcal{K}$, where $\mathcal{K}$ is the class of compact geodesically convex sets in $\mathbb{H}^{n+1}$ with nonempty interior.

\begin{lem}[\cite{Ver19}]\label{Lem4.4}
	Let $\{A_j\}\subset\mathcal{K}$ be a sequence of geodesically convex sets such that $A_j\to A$ as $j\to\infty$ in the Hausdorff metric. Then for every $k=0,\ldots,n$ we have
	\begin{align*}
	\Phi_k(A_j,\cdot)\to \Phi_k(A,\cdot)
	\end{align*}
	as $j\to\infty$, weakly in the sense of measure.
\end{lem}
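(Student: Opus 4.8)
The statement is a continuity result for the curvature measures that are \emph{defined} through the local Steiner formula \eqref{4.5}, so the plan is to reduce it to the continuity of the local parallel volumes and then to invert that formula. For $\epsilon>0$ write $\mu_\epsilon(A,\cdot)$ for the finite positive measure $B\mapsto V(\mathscr{P}_\epsilon(A,B))$ on Borel sets. Lemma \ref{Lem4.3} reads
\begin{equation*}
\mu_\epsilon(A,\cdot)=\sum_{k=0}^{n} l_{n+1-k}(\epsilon)\,\Phi_k(A,\cdot).
\end{equation*}
The functions $\epsilon\mapsto l_{n+1-k}(\epsilon)$, $k=0,\dots,n$, are linearly independent, because their derivatives $\cosh^{k}\tau\,\sinh^{n-k}\tau$ are (up to the common factor $2^{-n}e^{-n\tau}$ they equal $(v+1)^{k}(v-1)^{n-k}$ with $v=e^{2\tau}$, which are independent polynomials). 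Hence there are $n+1$ values $0<\epsilon_0<\dots<\epsilon_n$ for which the matrix $\big(l_{n+1-k}(\epsilon_i)\big)_{0\le i,k\le n}$ is invertible, and inverting it expresses each curvature measure as a fixed linear combination
\begin{equation*}
\Phi_k(A,\cdot)=\sum_{i=0}^{n} c_{ki}\,\mu_{\epsilon_i}(A,\cdot),
\end{equation*}
with constants $c_{ki}$ independent of $A$. Since weak convergence is preserved under finite linear combinations, it then suffices to prove $\mu_\epsilon(A_j,\cdot)\to\mu_\epsilon(A,\cdot)$ weakly for each fixed $\epsilon$.

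For this I would test against an arbitrary bounded $g\in C(\mathbb{H}^{n+1})$; since $A_j\to A$ in the Hausdorff metric, all $A_j$ (for $j$ large) and $A$ lie in a fixed geodesic ball, so every measure in sight is supported in one compact set. As $\mu_\epsilon(A,\cdot)$ is the pushforward under the nearest-point projection $f_A$ of the volume measure restricted to the shell $A^\epsilon\setminus A=\{x:0<d(A,x)\le\epsilon\}$, we may rewrite
\begin{equation*}
\int g\,d\mu_\epsilon(A_j,\cdot)=\int_{A_j^\epsilon\setminus A_j} g\big(f_{A_j}(x)\big)\,dV(x),
\end{equation*}
and similarly for $A$, so the task becomes passing to the limit inside this integral.

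There are two ingredients. For the integration domains, $A_j\to A$ forces $A_j^\epsilon\to A^\epsilon$, hence $\mathbf{1}_{A_j^\epsilon\setminus A_j}=\mathbf{1}_{A_j^\epsilon}-\mathbf{1}_{A_j}\to\mathbf{1}_{A^\epsilon}-\mathbf{1}_A=\mathbf{1}_{A^\epsilon\setminus A}$ pointwise $V$-almost everywhere, since the boundaries of these convex sets are $V$-null. For the projections, I would use that $\mathbb{H}^{n+1}$ is a Hadamard manifold, so the metric projection onto a compact geodesically convex set is single-valued and nonexpansive and the set of points with non-unique nearest point is $V$-null. A subsequence argument then gives $f_{A_j}(x)\to f_A(x)$ at every $x\notin A$ with unique projection: any accumulation point $y$ of $\{f_{A_j}(x)\}$ lies in $A$ and satisfies $d(x,y)=\lim_j d(x,A_j)=d(x,A)$, so $y=f_A(x)$ by uniqueness. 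Combining these with the continuity of $g$, the integrand $g(f_{A_j}(x))\,\mathbf{1}_{A_j^\epsilon\setminus A_j}(x)$ converges $V$-a.e. to $g(f_A(x))\,\mathbf{1}_{A^\epsilon\setminus A}(x)$, is bounded by $\sup|g|$, and is supported in a fixed ball of finite volume; dominated convergence then closes the argument.

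The crux will be the almost-everywhere convergence $f_{A_j}\to f_A$ together with the $V$-negligibility of the singular set of non-unique projections; this is precisely where nonpositive curvature enters, guaranteeing uniqueness and Lipschitz continuity of the metric projection exactly as in the Euclidean theory. A secondary point to verify is that the hyperbolic volume element causes no difficulty in the dominated-convergence step, which it does not, since the whole construction is confined to a fixed compact ball on which the density is bounded above and below.
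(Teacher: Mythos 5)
This lemma is not proved in the paper at all: it is imported verbatim from Veronelli \cite{Ver19} (who adapts Schneider's Euclidean argument to hyperbolic space), so there is no in-paper proof to compare against, and your attempt must be judged on its own merits. Judged so, it is correct and is in fact the classical route: (1) the functions $l_{n+1-k}$ are linearly independent (your derivative computation is right), so evaluation at suitable points $\epsilon_0<\dots<\epsilon_n$ gives an invertible matrix and expresses each $\Phi_k(A,\cdot)$ as a fixed, $A$-independent linear combination of the local parallel-volume measures $\mu_{\epsilon_i}(A,\cdot)$; (2) weak convergence of $\mu_\epsilon(A_j,\cdot)$ to $\mu_\epsilon(A,\cdot)$ then follows from the pushforward representation, a.e.\ convergence of the indicators of the shells, convergence of the nearest-point projections, and dominated convergence, all measures being supported in one compact set. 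Two points you flag as "the crux" deserve comment. First, the singular set issue is cleaner than you suggest: in $\mathbb{H}^{n+1}$ the nearest-point projection onto a closed geodesically convex set is single-valued at \emph{every} exterior point (a standard Hadamard-manifold fact), so no null set of non-uniqueness needs to be excised, and your accumulation-point argument then gives $f_{A_j}(x)\to f_A(x)$ for all $x\notin A$. Second, the step $\mathbf{1}_{A_j}\to\mathbf{1}_A$ off $\partial A$ is not an immediate consequence of the inclusions $A\subset A_j^{\delta}$, $A_j\subset A^{\delta}$: to see that an interior point $x$ of $A$ eventually lies in $A_j$ you should use convexity of $d(\cdot,A_j)$ along geodesics (extend the geodesic from the foot point of $x$ on $A_j$ through $x$ to a point $z\in A$ and deduce $d(z,A_j)>\delta$, a contradiction); this is standard for convex bodies but is the one place where convexity of the $A_j$, and not just Hausdorff convergence, genuinely enters. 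With those standard facts filled in, your proof is complete and self-contained, which is arguably more than the paper itself provides for this statement.
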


Let us turn to the case of h-convex polytopes. Suppose that $P=\bigcap\limits_{i=1}^m\overline{B}_{\textbf{e}_i}(x_i)$ and $u(P,\textbf{e}_i)=x_i$ for $1\le i\le m$. Denote the facet $P\cap H_{\textbf{e}_i}(x_i)$ as $F(P,\textbf{e}_i)$ and its interior as $\mathring{F}(P,\textbf{e}_i)$. Let $P_t$ be the set $\mathscr{P}_t(P,\mathring{F}(P,\textbf{e}_i))$ as defined in Lemma \ref{Lem4.3} for any $t>0$, and let $S(P,\textbf{e}_i)$ be the surface area of $F(P,\textbf{e}_i)$. Now we calculate the expression for $V(P_t)$ specifically in the upper half-space model $(\mathbb{U}^{n+1},g_U)$. Without loss of generality, we may assume that $\mathring{F}(P,\textbf{e}_i)$ is given by
\begin{equation*}
\mathring{F}(P,\textbf{e}_i)= \{ (\textbf{y}, y_{n+1}) \in \mathbb{U}^{n+1}: \ \textbf{y} \in \hat{F}(P,\textbf{e}_i), \ y_{n+1} =e^{-x_i} \}, 
\end{equation*}
where $\hat{F}(P,\textbf{e}_i)$ is a domain in $\mathbb{R}^{n}$. Let $\hat{S}(P,\textbf{e}_i)$ be the volume of $\hat{F}(P,\textbf{e}_i)$ in $\mathbb{R}^{n}$. Then we have $S(P,\textbf{e}_i)=e^{n x_i}\hat{S}(P,\textbf{e}_i)$ and
\begin{align}\label{4.6}
P_t=\{(\textbf{y},\bar{y})\in\mathbb{U}^{n+1}:\ \textbf{y}\in \hat{F}(P,\textbf{e}_i),\ e^{-x_i-t}\le \bar{y}< e^{-x_i}\}.
\end{align}
Since the volume element of $(\mathbb{U}^{n+1},g_U)$ is $(y_{n+1})^{-(n+1)}dy_{n+1}d\textbf{y}$, we obtain
\begin{equation}\label{4.7}
\begin{split}
V(P_t)=\int_{e^{-x_i-t}}^{e^{-x_i}}\int_{P_t\cap\{y_{n+1}=\bar{y}\}}\frac{d\bar{y} d\textbf{y}}{\bar{y}^{n+1}}
=\frac{1}{n}(e^{n x_i+n t}-e^{n x_i})\hat{S}(P,\textbf{e}_i)
=\frac{e^{nt}-1}{n}S(P,\textbf{e}_i).
\end{split}
\end{equation}

The following lemma is also needed in the proof of Lemma \ref{Lem4.6}.

\begin{lem}\label{Lem4.5}
	Let $\textbf{e}_1,\ \textbf{e}_2\in\mathbb{S}^{n}$ and $x_1,\ x_2>0$. Let $P(x)=\overline{B}_{\textbf{e}_1}(x_1)\cap\overline{B}_{\textbf{e}_2}(x_2)$ and $P_t$ be the set $\mathscr{P}_t(P(x),\mathring{F}(P(x),\textbf{e}_1))$ as defined in Lemma \ref{Lem4.3} for any $t>0$. Define
	\begin{align*}
	M(t)=\left(\overline{B}_{\textbf{e}_1}(x_1+t)\backslash\overline{B}_{\textbf{e}_1}(x_1)\right)\bigcap \overline{B}_{\textbf{e}_2}(x_2).
	\end{align*}
	Then 
	\begin{align*}
	V(M(t)\Delta P_t)\le C t^2,\quad \text{as}\ t\to 0^+,
	\end{align*}
	where $\Delta$ denotes the symmetric difference between two sets, and $C$ is a positive constant depending only on $\{\textbf{e}_1,\textbf{e}_2\}$ and $(x_1,x_2)$.
\end{lem}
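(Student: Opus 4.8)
The plan is to reduce everything to a comparison of horizontal cross-sections in the upper half-space model $(\mathbb{U}^{n+1},g_U)$. Since $\textbf{e}_1\neq\textbf{e}_2$ (otherwise $P(x)$ is a single horo-ball and there is nothing to prove), I apply an isometry carrying $\textbf{e}_1$ to $\infty$, so that $\overline{B}_{\textbf{e}_1}(s)=\{Y\in\mathbb{U}^{n+1}:y_{n+1}\ge e^{-s}\}$ while $D:=\overline{B}_{\textbf{e}_2}(x_2)$ is the closure of a Euclidean ball tangent to $\partial\mathbb{U}^{n+1}$ at $\textbf{e}_2$. I write $\varrho(h)$ for the radius of the horizontal cross-section $D\cap\{y_{n+1}=h\}$, which is a Euclidean $n$-ball centered over $\textbf{e}_2$; the function $\varrho$ is real-analytic and positive on the open height range of $D$, and its only vertical tangents occur at the top and bottom of $D$, where $\varrho\to 0$.

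I then identify the two sets. By \eqref{4.6} (applied with $i=1$), $P_t$ is the vertical cylinder over $\hat{F}(P,\textbf{e}_1)$ sitting in the slab $\{e^{-x_1-t}\le y_{n+1}<e^{-x_1}\}$; and since $F(P,\textbf{e}_1)=\{y_{n+1}=e^{-x_1}\}\cap D$, its projection $\hat{F}(P,\textbf{e}_1)$ is exactly the cross-section disk of radius $\varrho_0:=\varrho(e^{-x_1})$. On the other hand, in this model $M(t)=D\cap\{e^{-x_1-t}\le y_{n+1}<e^{-x_1}\}$. Hence both sets lie in the same slab, and at each height $\bar{y}\in[e^{-x_1-t},e^{-x_1})$ their cross-sections are concentric Euclidean $n$-balls of radii $\varrho_0$ and $\varrho(\bar{y})$, so that the cross-section of $M(t)\Delta P_t$ is an annulus of $n$-dimensional area $\omega_n\,|\varrho(\bar{y})^n-\varrho_0^n|$.

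The decisive estimate is the Lipschitz control of $\varrho$, and this is the only place where non-degeneracy is used. Because $F(P,\textbf{e}_1)$ is an $n$-dimensional facet, $\varrho_0>0$, so $e^{-x_1}$ lies strictly inside the height range of $D$, bounded away from the two heights at which $\partial D$ is vertical. Therefore, for small $t$, the whole interval $[e^{-x_1-t},e^{-x_1}]$ remains in a fixed compact subinterval on which $\varrho$ is Lipschitz with constant $L$ depending only on $\{\textbf{e}_1,\textbf{e}_2\}$ and $(x_1,x_2)$; together with $|\bar{y}-e^{-x_1}|\le e^{-x_1}-e^{-x_1-t}\le e^{-x_1}t$ and the boundedness of the radii, this yields $\omega_n\,|\varrho(\bar{y})^n-\varrho_0^n|\le C_1|\varrho(\bar{y})-\varrho_0|\le C_2\,t$ at every height.

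Finally I integrate against the volume form $y_{n+1}^{-(n+1)}\,dy_{n+1}\,d\textbf{y}$ of $g_U$, using $y_{n+1}^{-(n+1)}\le e^{(n+1)(x_1+t)}\le C_3$ on the slab:
\begin{equation*}
V(M(t)\Delta P_t)=\int_{e^{-x_1-t}}^{e^{-x_1}}y_{n+1}^{-(n+1)}\,\omega_n\,|\varrho(y_{n+1})^n-\varrho_0^n|\,dy_{n+1}\le C_3\,C_2\,t\,(e^{-x_1}-e^{-x_1-t})\le C\,t^2,
\end{equation*}
with $C$ depending only on $\{\textbf{e}_1,\textbf{e}_2\}$ and $(x_1,x_2)$. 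I expect the main (and rather mild) difficulty to be exactly the bookkeeping of this last Lipschitz bound: confirming that $n$-dimensionality of the facet is what keeps the relevant heights away from the poles of $D$, where the square-root behavior of $\varrho$ would otherwise spoil the linear-in-$t$ control and the dependence of $C$ on the data.
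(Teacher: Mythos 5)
Your proof is correct and takes essentially the same route as the paper's: both pass to the upper half-space model with $\textbf{e}_1$ sent to $\infty$, identify $M(t)$ and $P_t$ as sets in the slab $\{e^{-x_1-t}\le y_{n+1}<e^{-x_1}\}$ whose horizontal cross-sections are concentric $n$-balls, write $V(M(t)\Delta P_t)$ as an integral of the annulus areas $\omega_n|\varrho(\bar y)^n-\varrho_0^n|$ against $\bar y^{-(n+1)}d\bar y$, and bound the integrand by $O(t)$ using regularity of the cross-section radius away from the poles of the tangent ball. The only cosmetic differences are that the paper additionally normalizes $\textbf{e}_2$ to the origin and restores the general case by a final isometry argument (with transformed parameters $\tilde x_1,\tilde x_2$), and it carries out your Lipschitz step via two explicit applications of the mean value theorem to $R(y)=\sqrt{e^{x_2}y-y^2}$.
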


\begin{proof}
	We will estimate the volume of $M(t)\Delta P_t$ in the upper half-space model $(\mathbb{U}^{n+1},g_U)$. 
 
    At first, we assume that $B_{\textbf{e}_1}(s) = \{ Y \in \mathbb{U}^{n+1}:\ y_{n+1} > e^{-s} \}$ and $B_{\textbf{e}_2}(s) = \{ Y \in \mathbb{U}^{n+1}:\ 0\le |\textbf{y}|\le \sqrt{e^s y_{n+1}-y_{n+1}^2},\ 0<y_{n+1}\le e^{s} \}$, see Figure \ref{Fig-4-2}.
    Define $R(y)=\sqrt{e^{x_2}y-y^2}$. 
	Then $P_t$ is given by
	\begin{align*}
	P_t=\{(\textbf{y},y_{n+1})\in\mathbb{U}^{n+1}:\ 0\le |\textbf{y}|< R(e^{-x_1}),\ e^{-x_1-t}\le y_{n+1}< e^{-x_1}\},
	\end{align*}
	and $M(t)$ is given by
	\begin{align*}
	M(t)=\{(\textbf{y},y_{n+1})\in\mathbb{U}^{n+1}:\ 0\le |\textbf{y}|\le R(y_{n+1}),\ e^{-x_1-t}\le y_{n+1}< e^{-x_1}\}.
	\end{align*}
	Thus 
	\begin{align*}
	V(M(t)\Delta P_t)
	&=\int_{e^{-x_1-t}}^{e^{-x_1}}\int_{\left(M(t)\Delta P(t)\right)\cap\{y_{n+1}=\bar{y}\}}\frac{d\bar{y} d\textbf{y}}{\bar{y}^{n+1}}
	=\omega_n\int_{e^{-x_1-t}}^{e^{-x_1}}\frac{|R^n(e^{-x_1})-R^n(\bar{y})|}{\bar{y}^{n+1}}d\bar{y},
	\end{align*}
	where $\omega_n$ is the volume of the unit ball in $\mathbb{R}^n$. By the mean value theorem, there exists $\xi \in [e^{-x_1-t}, e^{-x_1}]$ such that
	\begin{align}\label{4.8}
	V(M(t)\Delta P_t) 
	&=\omega_n\left| R^n(e^{-x_1})-R^n(\xi)\right|\int_{e^{-x_1-t}}^{e^{-x_1}}\frac{d\bar{y}}{\bar{y}^{n+1}} \nonumber\\
	&=\frac{\omega_n}{n}\left| R^n(e^{-x_1})-R^n(\xi)\right| e^{n x_1}(e^{nt}-1).
	\end{align}
	Using the mean value theorem again, we have
	\begin{equation}\label{4.9}
	\frac{R^n(e^{-x_1})-R^n(\xi)}{x_1-(-\log\xi)}
	=-\frac{n}{2}\left( e^{x_2-\eta}-2 e^{-2\eta}\right)\left(e^{x_2-\eta}-e^{-2\eta}\right)^{\frac{n}{2}-1},
	\end{equation}
	where $x_1\le \eta\le -\log\xi\le x_1+t$. Notice that $0\le -\log \xi -x_1\le t$. Substituting (\ref{4.9}) into (\ref{4.8}), we obtain
	\begin{align*}
	V(M(t)\Delta P_t)
	&=\frac{\omega_n}{2}e^{n x_1}\left| e^{x_2-\eta}-2 e^{-2\eta}\right|\left(e^{x_2-\eta}-e^{-2\eta}\right)^{\frac{n}{2}-1}\cdot (-\log\xi-x_1)(e^{nt}-1)\\
	&\le C t^2,\quad \text{as}\ t\to 0^+,
	\end{align*}
	where $C$ is a positive constant depending only on $x_1$ and $x_2$.
 
    In general, there exists an isometry $T$ in hyperbolic space such that $B_{\Tilde{\textbf{e}}_1}(\Tilde{x}_1)=T(B_{\textbf{e}_1}(x_1))$ and $B_{\tilde{\textbf{e}}_2}(\tilde{x}_2)=T(B_{\textbf{e}_2}(x_2))$, where $B_{\Tilde{\textbf{e}}_1}(\Tilde{x}_1) = \{ Y:\ y_{n+1} > e^{-\Tilde{x}_1} \}$ and $B_{\Tilde{\textbf{e}}_2}(\tilde{x}_2) = \{ Y:\ 0\le |\textbf{y}|\le \sqrt{e^{\tilde{x}_2} y_{n+1}-y_{n+1}^2},\ 0<y_{n+1}\le e^{\tilde{x}_2} \}$. Then we have $T(P(x))=\overline{B}_{\Tilde{\textbf{e}}_1}(\Tilde{x}_1)\cap\overline{B}_{\Tilde{\textbf{e}}_2}(\Tilde{x}_2)$, where $\Tilde{x}_1$ and $\Tilde{x}_2$ are determined by $\{\textbf{e}_1,\textbf{e}_2\}$ and $(x_1,x_2)$. Consequently, by applying the above estimate, we can obtain
    \begin{align*}
	V(M(t)\Delta P_t)=V(T(M(t))\Delta T(P_t))\le \Tilde{C} t^2,\quad \text{as}\ t\to 0^+,
	\end{align*}
	where $\Tilde{C}$ is a positive constant depending only on $\{\textbf{e}_1,\textbf{e}_2\}$ and $(x_1,x_2)$. Moreover, $\Tilde{C}$ is continuous on $(x_1,x_2)$. This completes the proof of Lemma \ref{Lem4.5}.
\end{proof}

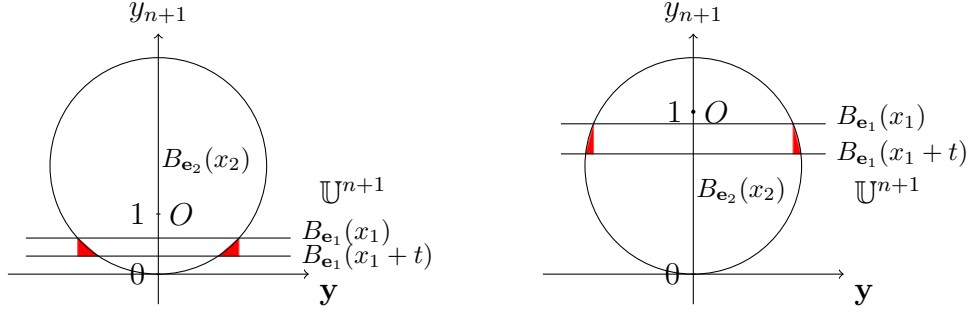
\begin{figure}[htbp]
	\centering
	\begin{tikzpicture}[scale=0.8][>=Stealth] 
	\draw[->](-2.5,0)--(2.5,0) node[below right]{$\textbf{y}$};
	\draw[->](0,-0.5)--(0,4) node[above]{$y_{n+1}$};
	\foreach \y in {0, 1} \draw (1pt, \y) -- (-1pt, \y) node[anchor=east] {$\y$};
	\filldraw[opacity=0.6, draw=red!70, fill=red!100] (1.34,0.6)--(1,0.3)--(1.34,0.3)--cycle;
	\filldraw[opacity=0.6, draw=red!70, fill=red!100] (-1.34,0.6)--(-1,0.3)--(-1.34,0.3)--cycle;
	\node[ right] at (0,1) {$O$};
	\node[right] at (2.2,0.7) {{\footnotesize{$B_{\textbf{e}_1}(x_1)$}}};
	\node[right] at (2.2,0.3) {{\footnotesize{$B_{\textbf{e}_1}(x_1+t)$}}};
	\node[below ] at (0.8,2.3) {{\footnotesize{$B_{\textbf{e}_2}(x_2)$}}};
	\draw (0,1.8) circle (1.8);
	\draw (-2.2,0.6)--(2.2,0.6);
	\draw (-2.2,0.3)--(2.2,0.3);
	\node[ right] at (2.5,1.4) {$\mathbb{U}^{n+1}$};
	\end{tikzpicture}
	\hspace{2.5em}
	\begin{tikzpicture}[scale=0.8][>=Stealth] 
	\draw[->](-2.5,0)--(2.5,0) node[below right]{$\textbf{y}$};
	\draw[->](0,-0.5)--(0,4) node[above]{$y_{n+1}$};
	\foreach \y in {0} \draw (1pt, \y) -- (-1pt, \y) node[anchor=east] {$\y$};
	\filldraw[opacity=0.6, draw=red!70, fill=red!100] (1.658,2)--(1.658,2.5)--(1.789,2)--cycle;
	\filldraw[opacity=0.6, draw=red!70, fill=red!100] (-1.658,2)--(-1.658,2.5)--(-1.789,2)--cycle;
	\fill (0,2.7) circle (1pt);
	\node[ left] at (0,2.7) {$1$};
	\node[ right] at (0,2.7) {$O$};
	\node[right] at (2.2,2.6) {{\footnotesize{$B_{\textbf{e}_1}(x_1)$}}};
	\node[right] at (2.2,2) {{\footnotesize{$B_{\textbf{e}_1}(x_1+t)$}}};
	\node[below ] at (0.8,1.8) {{\footnotesize{$B_{\textbf{e}_2}(x_2)$}}};
	\draw (0,1.8) circle (1.8);
	\draw (-2.2,2.5)--(2.2,2.5);
	\draw (-2.2,2)--(2.2,2);
	\node[ right] at (2.5,1.4) {$\mathbb{U}^{n+1}$};
	\end{tikzpicture}
	\caption{$M(t)\Delta P_t$}
	\label{Fig-4-2}
\end{figure}

\begin{lem}\label{Lem4.6}
	Let $\{\textbf{e}_1,\ldots,\textbf{e}_m\}\subset \mathbb{S}^n\ (m\ge2)$ and $x\in\mathbb{R}_{+}^m$, denote by $P(x)$ the horospherical Wulff shape associated with $(\{\textbf{e}_i\}_{i=1}^m,x)$. Then $V(P(x))$ is a $C^1$ function of $x$ in $\mathbb{R}_{+}^m$, and
	\begin{equation}\label{4.10}
	\frac{\partial}{\partial x_i}V(P(x))=S(P(x),\textbf{e}_i),\qquad 1\le i\le m.
	\end{equation}
\end{lem}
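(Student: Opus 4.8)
The plan is to establish \eqref{4.10} by differentiating $V(P(x))$ in each coordinate through a one-sided volume increment, and then to upgrade pointwise differentiability to $C^1$ regularity by showing the facet areas depend continuously on $x$. Fix $i$ and a small $t>0$, and let $x(t)$ denote the vector obtained from $x$ by replacing $x_i$ with $x_i+t$. Since $B_{\textbf{e}_i}(x_i)\subset B_{\textbf{e}_i}(x_i+t)$, the horospherical Wulff shapes are nested, $P(x)\subset P(x(t))$, so the volume increment is the volume of the added shell
\begin{equation*}
M_i(t):=P(x(t))\setminus P(x)=\Big(\overline{B}_{\textbf{e}_i}(x_i+t)\setminus\overline{B}_{\textbf{e}_i}(x_i)\Big)\cap\bigcap_{j\ne i}\overline{B}_{\textbf{e}_j}(x_j).
\end{equation*}
I would then compare $M_i(t)$ with the idealized parallel slab $P_t:=\mathscr{P}_t\big(P(x),\mathring{F}(P(x),\textbf{e}_i)\big)$ over the open facet, whose volume is computed exactly in \eqref{4.7} as $V(P_t)=\tfrac{e^{nt}-1}{n}S(P(x),\textbf{e}_i)$.

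The heart of the argument is the estimate $V(M_i(t)\,\Delta\,P_t)=O(t^2)$ as $t\to0^+$. The two sets agree over the interior of the facet and differ only near its relative boundary, that is, near the edges where $F(P(x),\textbf{e}_i)$ meets a neighboring facet $F(P(x),\textbf{e}_j)$; along each such edge the local geometry is that of two horo-balls, and the discrepancy is precisely what Lemma \ref{Lem4.5} bounds by $O(t^2)$. Since $P(x)$ has only finitely many facets, summing these contributions (the constants in Lemma \ref{Lem4.5} being locally bounded in $x$, as that lemma records) yields $V(M_i(t)\,\Delta\,P_t)\le Ct^2$. Consequently
\begin{equation*}
V(P(x(t)))-V(P(x))=V(M_i(t))=\frac{e^{nt}-1}{n}S(P(x),\textbf{e}_i)+O(t^2),
\end{equation*}
and dividing by $t$ and letting $t\to0^+$ gives the right-hand partial derivative $S(P(x),\textbf{e}_i)$, using $(e^{nt}-1)/(nt)\to1$. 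Repeating the computation with $t<0$ (now $P(x(t))\subset P(x)$, and the removed shell $P(x)\setminus P(x(t))$ is analyzed identically) produces the same left-hand limit, so $\partial_{x_i}V(P(x))$ exists and equals $S(P(x),\textbf{e}_i)$.

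It remains to show $x\mapsto S(P(x),\textbf{e}_i)$ is continuous on $\mathbb{R}_+^m$. Working in the upper half-space model with $\textbf{e}_i=\infty$, the facet projects to the region $\hat{F}(P(x),\textbf{e}_i)\subset\mathbb{R}^n$ cut out at height $e^{-x_i}$ by the remaining horospheres; as $x$ varies these horospheres and the height vary continuously and stay in a fixed bounded set, so by dominated convergence the Euclidean volume $\hat{S}(P(x),\textbf{e}_i)$, and hence $S(P(x),\textbf{e}_i)=e^{nx_i}\hat{S}(P(x),\textbf{e}_i)$, depends continuously on $x$. (Equivalently, $P(x^{(k)})\to P(x)$ in the Hausdorff metric by Theorem \ref{Thm3.5}, whence Lemma \ref{Lem4.4} gives weak convergence of the curvature measures $\Phi_n$, together with the identification $S(P(x),\textbf{e}_i)=\Phi_n(P(x),\mathring{F}(P(x),\textbf{e}_i))$.) With every partial derivative existing and continuous, $V(P(\cdot))$ is $C^1$ and \eqref{4.10} follows.

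I expect the main obstacle to be the second-order comparison $V(M_i(t)\,\Delta\,P_t)=O(t^2)$: one must verify that the true added shell $M_i(t)$ deviates from the parallel slab $P_t$ only through genuinely edge-localized, second-order contributions, so that the multi-facet situation reduces cleanly to the pairwise estimate of Lemma \ref{Lem4.5}. A secondary delicacy is the continuity of the facet areas across configurations where a facet degenerates, which the half-space computation handles since the relevant volume then tends to zero continuously.
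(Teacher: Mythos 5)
Your outline coincides with the paper's own proof: the same shell-versus-slab comparison, the same exact volume formula \eqref{4.7} for the slab, Lemma \ref{Lem4.5} as the key pairwise estimate, and continuity of facet areas via Hausdorff convergence (Theorem \ref{Thm3.5}) and the weak continuity of curvature measures (Lemma \ref{Lem4.4}). However, the step you defer as ``the main obstacle'' --- reducing the multi-facet estimate $V(M_i(t)\,\Delta\,P_t)=O(t^2)$ to the two-horo-ball Lemma \ref{Lem4.5} --- is precisely where the paper does its work, and it is closed not by an edge-by-edge localization but by a short set-theoretic decomposition. Writing $\delta_i$ for the $i$-th standard basis vector of $\mathbb{R}^m$, set $P_j(x)=\overline{B}_{\textbf{e}_i}(x_i)\cap\overline{B}_{\textbf{e}_j}(x_j)$, $P_{t,j}=\mathscr{P}_t\bigl(P_j(x),\mathring{F}(P_j(x),\textbf{e}_i)\bigr)$ and $M_j(t)=\bigl(\overline{B}_{\textbf{e}_i}(x_i+t)\setminus\overline{B}_{\textbf{e}_i}(x_i)\bigr)\cap\overline{B}_{\textbf{e}_j}(x_j)$; then
\begin{equation*}
M_i(t)=P(x+t\delta_i)\setminus P(x)=\bigcap_{j\neq i}M_j(t),
\qquad
P_t=\bigcap_{j\neq i}P_{t,j},
\end{equation*}
and the elementary inclusion $\bigl(\bigcap_k A_k\bigr)\setminus\bigl(\bigcap_k B_k\bigr)\subset\bigcup_k\bigl(A_k\setminus B_k\bigr)$, applied in both directions, yields
\begin{equation*}
V\bigl(M_i(t)\,\Delta\,P_t\bigr)\le\sum_{j\neq i}V\bigl(M_j(t)\,\Delta\,P_{t,j}\bigr)\le Ct^2,
\end{equation*}
which is exactly Lemma \ref{Lem4.5} summed over all $j\neq i$ (not only over horo-balls meeting the facet along an edge). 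With this decomposition your argument is complete.

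One further point: for $t<0$ you claim the removed shell ``is analyzed identically,'' but there the natural slab is built over the facet of $P(x+t\delta_i)$, not of $P(x)$, so the comparison produces $\frac{e^{n|t|}-1}{n}S(P(x+t\delta_i),\textbf{e}_i)$, and to conclude that the left derivative equals $S(P(x),\textbf{e}_i)$ one needs $S(P(x+t\delta_i),\textbf{e}_i)\to S(P(x),\textbf{e}_i)$ as $t\to 0^-$. This is exactly where the paper invokes Lemma \ref{Lem4.4}. Your final paragraph supplies this continuity (and it also gives the $C^1$ claim, which the paper leaves implicit), so your proof closes --- but the left derivative is genuinely not ``identical'' to the right one: it requires the facet-area continuity you establish afterwards.
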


\begin{proof}
    Denote by $\{\delta_k\}_{k=1}^{m}$ the standard basis of $\mathbb{R}^m$. Let us work in the upper half-space model $(\mathbb{U}^{n+1},g_U)$, see Figure \ref{Fig-4-3}.
	
	Let $P_t= \mathscr{P}_{t}(P(x),\mathring{F}(P(x),\textbf{e}_i))$ as defined in Lemma \ref{Lem4.3}, $P_j(x)=\overline{B}_{\textbf{e}_i}(x_i)\cap \overline{B}_{\textbf{e}_j}(x_j)$ and $P_{t,j}=\mathscr{P}_{t}(P_j(x),\mathring{F}(P_j(x),\textbf{e}_i))$. For any $t>0$ and $j=1,\ldots,m$, define
	\begin{align*}
	M_{j}(t)=\left(\overline{B}_{\textbf{e}_i}(x_i+t)\backslash\overline{ B}_{\textbf{e}_i}(x_i)\right)\bigcap\overline{B}_{\textbf{e}_j}(x_j).
	\end{align*}
     Applying Lemma \ref{Lem4.5}, we have
	\begin{align}\label{4.11}
	V(M_j(t)\Delta P_{t,j})\le C_j t^2,\quad \text{as}\ t\to 0^+,\quad j\neq i,
	\end{align}
	where $C_j$ is a positive constant depending only on $\{\textbf{e}_i,\textbf{e}_j\}$ and $(x_i,x_j)$. 
	
	It follows from the definition of $P(x)$ that 
	\begin{align}\label{4.12}
	P(x+t\delta_i)\backslash P(x)
	=\left(\overline{B}_{\textbf{e}_i}(x_i+t)\backslash\overline{ B}_{\textbf{e}_i}(x_i)\right)
	\bigcap\left(\bigcap\limits_{j\neq i}{\overline{B}_{\textbf{e}_j}(x_j)}\right)
	=\bigcap\limits_{j\neq i}{M_j(t)}.
	\end{align}
	By (\ref{4.6}), it is easily verified that $P_t=\bigcap\limits_{j\neq i}{P_{t,j}}$. 
    By (\ref{4.11}) and (\ref{4.12}), we have
	\begin{align*}
	V\left(\left(P(x+t\delta_i)\backslash P(x)\right)\backslash P_t\right)
	=V\left(\bigcup\limits_{j\neq i}((\bigcap\limits_{k\neq i}{M_k(t)})\backslash P_{t,j})\right)
	\le \sum\limits_{j\neq i}V\left(M_j(t)\backslash P_{t,j}\right)
	\le C t^2,\ \text{as}\ t\to 0^+,
	\end{align*}
	and
	\begin{align*}
	V\left(P_t\backslash\left(P(x+t\delta_i)\backslash P(x)\right)\right)
	=V\left(\bigcup\limits_{j\neq i}((\bigcap\limits_{k\neq i}{P_{t,k}})\backslash M_j(t))\right)
	\le \sum\limits_{j\neq i}V\left(P_{t,j}\backslash M_j(t)\right)
	\le C t^2,\ \text{as}\ t\to 0^+,
	\end{align*}
	where $C$ is a positive constant depending only on $\{\textbf{e}_k\}_{k=1}^m$ and $x$. Thus we obtain
	\begin{align}\label{4.13}
	V\left(\left(P(x+t\delta_i)\backslash P(x)\right)\Delta P_t\right)
	\le 2C t^2,\quad \text{as}\ t\to 0^+.
	\end{align}

	On the other hand, if $0>t>-\epsilon$ for sufficiently small $0<\epsilon<\min_i x_i$, then $P(x)=P((x+t\delta_i)+|t|\delta_i)$. Let $$\tilde{P}_{|t|}=\mathscr{P}_{|t|}( P(x+t\delta_i),\mathring{F}(P(x+t\delta_i), \textbf{e}_i) ).$$ 
	It follows from (\ref{4.13}) that
	\begin{align}\label{4.14}
	V\left(\left(P(x)\backslash P(x+t\delta_i)\right)\Delta \tilde{P}_{|t|}\right)
	\le \tilde{C} t^2,\quad \text{as}\ t\to 0^-,
	\end{align}
	where $\tilde{C}$ is a positive constant depending only on $\{\textbf{e}_k\}_{k=1}^m$, $x$ and $\epsilon$. 
	
	Note that $\Phi_n(P(x),F(P(x),\textbf{e}_i))=S(P(x),\textbf{e}_i)$. Using (\ref{4.13}) and Lemma \ref{Lem4.3}, we obtain
	\begin{equation*}
	\begin{split}
	\lim\limits_{t\to 0^+}\frac{V(P(x+t\delta_i))-V(P(x))}{t}
	=\lim\limits_{t\to 0^+}\frac{V(P_t)}{t}
	=S(P(x),\textbf{e}_i).
	\end{split}
	\end{equation*}
	As shown in the proof of Theorem \ref{Thm3.8}, $P(x+t\delta_i)$ converges to $P(x)$ in the Hausdorff metric as $t \to 0^{-}$.
	Using (\ref{4.14}), (\ref{4.7}) and Lemma \ref{Lem4.4}, we obtain
	\begin{equation*}
	\begin{split}
	\lim\limits_{t\to 0^-}\frac{V(P(x+t\delta_i))-V(P(x))}{t}
	=\lim\limits_{t\to 0^-}\frac{V(\tilde{P}_{|t|})}{-t}
	=\lim\limits_{t\to 0^-}\frac{e^{n|t|}-1}{n|t|}S(P(x+t\delta_i),\textbf{e}_i)
	=S(P(x),\textbf{e}_i).
	\end{split}
	\end{equation*}
	Consequently, we have
	\begin{equation*}
	\frac{\partial}{\partial x_i}V(P(x))
	=S(P(x),\textbf{e}_i).
	\end{equation*}
	This completes the proof of Lemma \ref{Lem4.6}.
\end{proof}

\begin{figure}[htbp]
	\centering
	\begin{tikzpicture}[scale=1][>=Stealth] 
	\draw[->](-3.3,0)--(3.3,0) node[below right]{$\textbf{y}$};
	\draw[->](0,-0.5)--(0,4) node[above]{$y_{n+1}$};
	\foreach \y in {0} \draw (1pt, \y) -- (-1pt, \y) node[anchor=east] {$\y$};
	\filldraw[opacity=0.6, draw=blue!70, fill=blue!100] (0.808,1.1)--(0.709,0.9)--(0.575,0.7)--(-0.973,0.7)--(-1.116,0.9)--(-1.223,1.1)--cycle;
	\fill (0,1.5) circle (1pt);
	\node[ left] at (0,1.5) {$1$};
	\node[ right] at (0,1.5) {$O$};
	\node[right] at (2.7,1.2) {{\footnotesize{$B_{\textbf{e}_1}(x_1)$}}};
	\node[right] at (2.7,0.9){{\footnotesize{$B_{\textbf{e}_1}(x_1+t)$}}};
	\node[below ] at (-1.97,2.6){{\footnotesize{$B_{\textbf{e}_2}(x_2)$}}};
	\node[below ] at (1.6,2.6){{\footnotesize{$B_{\textbf{e}_3}(x_3)$}}};
	\node[below ] at (-0.6,2.4) {{\footnotesize{$P(x)$}}};
	\draw (-0.85,1.8) circle (1.8);
	\draw (0.5,1.9) circle (1.9);
	\draw (-3,1.1)--(2.7,1.1);
	\draw (-3,0.7)--(2.7,0.7);
	\node[ right] at (3,2) {$\mathbb{U}^{n+1}$};
	\end{tikzpicture}
	\caption{Blue domain: $P(x+t\delta_1)\backslash P(x)$.}
	\label{Fig-4-3}
\end{figure}

\section{A boundedness lemma for h-convex bodies}\label{sec:5}

The following Lemma \ref{Lem5.1} is devoted to obtaining an upper bound for the horospherical support functions of a family of h-convex bodies with bounded volume, which will be used in the proof of the case $p<0$ of Theorem \ref{Thm1.1} in subsection \ref{sec:6.2} below.

\begin{lem}\label{Lem5.1}
	Let $M$ be a positive number and $K$ be a h-convex body in $\mathcal{K}_o^h$ with $V(K)\le M$. Then there exists a constant $C$ depending only on $M$ such that
	\begin{equation}\label{5.1}
	u(K,\textbf{e})\le C,\quad\forall\ \textbf{e}\in \mathbb{S}^n.
	\end{equation}
\end{lem}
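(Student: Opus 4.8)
The plan is to bound the circumradius of $K$ and feed it into the inequality $u(K,\textbf{e})\le R(K)$. Indeed, by the first identity in \eqref{3.7} we have $u(K,\textbf{e})\le\max_{\textbf{e}\in\mathbb{S}^n}u(K,\textbf{e})=R(K)$ for every $\textbf{e}\in\mathbb{S}^n$, so \eqref{5.1} follows once we produce a constant $C=C(M)$ with $R(K)\le C$; equivalently, we must show that a large circumradius forces a large volume. The essential difficulty, and the reason the argument must differ from the Euclidean one, is that $O$ may lie arbitrarily close to $\partial K$, so $V(K)$ cannot be bounded below by the volume of an inscribed geodesic ball $\hat{B}_{r(K)}(O)$. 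The lower bound has to come instead from the way the horocyclic arcs through a distant point fan out.

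Let $P\in\partial K$ realize $d(O,P)=R(K)=:R$. Since $O,P\in K$ and $K$ is h-convex, $K$ contains the h-convex hull $L_R:=\bigcap\{\overline{B}_{\textbf{e}}(s):O,P\in\overline{B}_{\textbf{e}}(s)\}$ of the two points, so $V(K)\ge V(L_R)$. To estimate $V(L_R)$ from below I would work in the upper half-space model $(\mathbb{U}^{n+1},g_U)$, placing the two points on the vertical axis at $O=(\textbf{0},1)$ and $P=(\textbf{0},e^{R})$, so that $L_R$ is invariant under Euclidean rotations about that axis. By \eqref{2.2}, the horo-balls containing $O$ and $P$ are $\{y_{n+1}\ge1\}$ together with the Euclidean balls tangent to $\{y_{n+1}=0\}$ centered at $(\textbf{a},0)$ of radius $k(\textbf{a})=\max\!\big(\tfrac{|\textbf{a}|^2+1}{2},\tfrac{|\textbf{a}|^2+e^{2R}}{2e^{R}}\big)$. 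A direct check, minimizing over all centers the slack in the inclusion condition $k(\textbf{a})\ge\frac{|\textbf{y}-\textbf{a}|^2+y_{n+1}^2}{2y_{n+1}}$ (the minimum occurring at $|\textbf{a}|=e^{R/2}$), shows that the solid of revolution
\begin{equation*}
\tilde{\Lambda}=\Big\{(\textbf{y},y_{n+1})\in\mathbb{U}^{n+1}:\ |\textbf{y}|\le\rho(y_{n+1}),\ 1\le y_{n+1}\le e^{R}\Big\},\qquad \rho(h)=\sqrt{(e^{R}+1)h-h^2}-e^{R/2},
\end{equation*}
obtained by revolving the planar $\mathbb{H}^2$-hull of $O$ and $P$ about the axis, lies inside every such horo-ball, hence $\tilde{\Lambda}\subset L_R\subset K$.

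It then remains to estimate $V(\tilde{\Lambda})$. Since the volume element of $g_U$ is $y_{n+1}^{-(n+1)}dy_{n+1}\,d\textbf{y}$,
\begin{equation*}
V(K)\ge V(\tilde{\Lambda})=\omega_n\int_1^{e^{R}}\frac{\rho(h)^n}{h^{n+1}}\,dh,
\end{equation*}
where $\omega_n$ is the volume of the unit ball in $\mathbb{R}^n$. Near $h=1$ one has $\rho(h)\sim\tfrac12 e^{R/2}(h-1)$, so the integral is bounded below by a positive constant times $e^{nR/2}$; in particular $V(\tilde{\Lambda})\to\infty$ as $R\to\infty$. Combining this with $V(K)\le M$ forces $R\le C(M)$ for an explicit constant $C(M)$, and therefore $u(K,\textbf{e})\le R(K)\le C(M)$, which is \eqref{5.1}. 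I expect the main obstacle to be the second step: verifying rigorously that the revolved lens lies inside every horo-ball through $O$ and $P$ (equivalently, pinning down the binding horospheres at $|\textbf{a}|=e^{R/2}$), since this is precisely what converts the soft statement ``far points force large volume'' into a usable, model-independent lower bound.
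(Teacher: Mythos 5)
Your proposal is correct and takes essentially the same route as the paper's proof: the paper also reduces to the h-convex hull of $O$ and a far point $P$, places the two points at $(\textbf{0},1)$ and $(\textbf{0},e^{r})$ in the upper half-space model, and arrives at exactly your solid of revolution with profile $\sqrt{(e^{r}+1)\bar{y}-\bar{y}^2}-e^{r/2}$ (its set $T(r)$ in \eqref{5.2}), whose volume it shows tends to infinity. The only differences are cosmetic: the paper obtains the divergence via concavity of the profile and a chord estimate (growth $\sim r$) rather than your sharper $\sim e^{nr/2}$ bound near $\bar{y}=1$, and the inclusion you flag as the main obstacle (the revolved lens lies in every horo-ball containing both points) is used by the paper as well, where it is asserted with no more verification than yours when passing from the h-convexity of $K$ to $V(K)\ge V(T(d(P,O)))$.
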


\begin{proof}
	We work in the upper half-space model $(\mathbb{U}^{n+1},g_U)$, and denote $O=(\textbf{0},1)\in\mathbb{U}^{n+1}$. For any $r>0$, let $P=(\textbf{0},e^r)$ be a point in $\mathbb{U}^{n+1}$ such that $d(O,P)=r$. Consider the h-convex hull of $O$ and $P$ defined by (see Figure \ref{fig-Tr})
	\begin{align}\label{5.2}
	T(r)=\bigcap\left\{\overline{B}_{\textbf{e}}(s):\ O,P\in\partial B_{\textbf{e}}(s)\right\}.
	\end{align}
	Here we regard $\textbf{e}$ as a point in $\partial\mathbb{U}^{n+1}=(\mathbb{R}^n\times\{0\})\cup\{\infty\}$.
	
	Now we proceed to prove that $V(T(r)) \to +\infty$ as $r \to \infty$.
	Note that the horosphere $\partial B_{\textbf{e}}(s)$ that passes through $O$ and $P$ satisfies $s=0$ and $\textbf{e}=(e^{\frac{r}{2}}\boldsymbol{\theta},0)$ for some $\boldsymbol{\theta}\in\mathbb{S}^{n-1}$. Then the horo-ball $B_{\textbf{e}}(s)$ is given by
	\begin{align*}
	B_{\textbf{e}}(s)=\left\{(\textbf{y},y_{n+1})\in\mathbb{U}^{n+1}:\ 
	|\textbf{y}-e^{\frac{r}{2}}\boldsymbol{\theta}|^2+\left(y_{n+1}-\frac{e^r+1}{2}\right)^2<\left(\frac{e^r+1}{2}\right)^2\right\}.
	\end{align*}
	Thus $T(r)$ can be represented as
	\begin{align*}
	T(r)=\left\{(\tau\boldsymbol{\theta},\bar{y})\in\mathbb{U}^{n+1}:\
	0\le \tau\le S(r,\bar{y}),\ 1\le \bar{y}\le e^r,\ \boldsymbol{\theta}\in\mathbb{S}^{n-1}\right\},
	\end{align*}
	where $S(r,\bar{y})=\sqrt{(e^r+1)\bar{y}-\bar{y}^2}-e^{\frac{r}{2}}$. A direct computation gives 
	\begin{align*}
	V(T(r)) &=\int_1^{e^r}d\bar{y}\int_{T(r)\cap\{y_{n+1}=\bar{y}\}}\frac{d\textbf{y}}{\bar{y}^{n+1}}
	=\omega_n \int_1^{e^r} \frac{S(r,\bar{y})^n}{\bar{y}^{n+1}}d\bar{y},
	\end{align*}
	where $\omega_n$ is the volume of the unit ball in $\mathbb{R}^n$. One can check that $S(r, \cdot)$ is concave on $[1, \frac{e^r+1}{2}]$ and  $S(r, 1)=0$.
	Then for $1\le \bar{y}\le\frac{e^r+1}{2}$,
	\begin{equation*}
	S(r, \bar{y}) \geq \frac{S(r, \frac{e^r+1}{2}) - S(r,1)}{ \frac{e^r+1}{2} -1} (\bar{y} -1)
	=\frac{e^{\frac{r}{2}}-1}{e^{\frac{r}{2}}+1}(\bar{y}-1).
	\end{equation*}
	Hence
	\begin{align*}
	V(T(r))  &\ge \omega_n \int_1^{\frac{e^r+1}{2}}\left(\frac{e^{\frac{r}{2}}-1}{e^{\frac{r}{2}}+1}\right)^n\frac{(\bar{y}-1)^n}{\bar{y}^{n+1}}d\bar{y}\\
	&=\omega_n \left(\frac{e^{\frac{r}{2}}-1}{e^{\frac{r}{2}}+1}\right)^n \left[\log\frac{e^r+1}{2}+\sum\limits_{k=0}^{n-1}\frac{(-1)^{n-k}\binom{n}{k}}{n-k}\left(1-\left(\frac{e^r+1}{2}\right)^{k-n}\right)\right].
	\end{align*}
	It follows that the volume $V(T(r)) \to +\infty$  as $r \to \infty$. Therefore, there exists a constant $C$ depending only on $M$ such that
	$V(T(C))>M$. 
	
	To finish the proof, we derive a contradiction. 
    Suppose that there exists $\textbf{e}$ satisfying $u(K,\textbf{e})\ge C$. Let $P \in \partial K \cap  H_{\textbf{e}} (u (K, \textbf{e}))$. Since $K\in\mathcal{K}_o^h$, we have 
	\begin{equation}\label{5.3}
	d(P,O)\ge u(K,\textbf{e})\ge C.
	\end{equation}
	It follows from the h-convexity of $K$ that $K$ contains the h-convex hull of $O$ and $P$, thus
	\begin{equation}\label{5.4}
	V(K)\ge V(T(d(P,O)))>M,
	\end{equation}
	which contradicts the assumption $V(K)\le M$. This completes the proof of Lemma \ref{Lem5.1}.
\end{proof}

\begin{figure}[htbp]
	\centering
	\begin{tikzpicture}[scale=0.6][>=Stealth] 
	\draw[->](-6,0)--(6,0) node[right]{$\textbf{y}$};
	\draw[->](0,-1)--(0,5) node[above]{$y_{n+1}$};
	\node[ right] at (0,1) {$O$};
	\fill (0,3) circle (2pt);
	\node[ right] at (0,3) {$P$};
	\node[ left] at (-0.2,2) {${T(r)}$};
	\draw (1.732,2) circle (2);
	\draw (-1.732,2) circle (2);
	\filldraw[opacity=0.8, fill=blue!100] (0,3)arc(150:210:2);
	\filldraw[opacity=0.8, fill=blue!100] (0,1)arc(-30:30:2);
	\node[ right] at (4,2) {$\mathbb{U}^{n+1}$};
	\end{tikzpicture}
	\caption{$T(r)$}
	\label{fig-Tr}
\end{figure}

\section{Proof of Theorem \ref{Thm1.1}}\label{sec:6}

Let $a_1,\ldots,a_{N}$ be positive numbers ($N\ge2$), and let $\textbf{e}_1,\ldots,\textbf{e}_{N}$ be unit vectors on $\mathbb{S}^n$. For convenience, we define
$$\mathbb{R}_{*}^{m}=\overline{\mathbb{R}_{+}^{m}}=\{(x_1,\ldots,x_{m})\in\mathbb{R}^{m}:\ x_i\ge0,\ 1\le i\le m\}.$$
From now on, we always denote $x_{m+i}=x_{i}$ for $x\in\mathbb{R}_*^m$ and $1\le i\le m$.

Recall that a discrete measure $\mu=\sum_{i=1}^{N}a_i\delta_{\textbf{e}_i}$ defined on $\mathbb{S}^n$ is called even if $\mu(\omega)=\mu(-\omega)$ for any Borel set $\omega\subset\mathbb{S}^n$, which is equivalent to (up to a permutation)
$$N=2m,\ \textbf{e}_i=-\textbf{e}_{m+i},\ a_i=a_{m+i},\quad \forall \  1\le i\le m.$$

Given any $x\in\mathbb{R}_*^{m}$ and unit vectors $\{\textbf{e}_i\}_{i=1}^{2m}$ satisfying $\textbf{e}_i=-\textbf{e}_{m+i}$ for $1\le i\le m$, we can construct an origin-symmetric h-convex polytope $P(x)$ in $\mathcal{P}^h(\textbf{e}_1,\ldots,\textbf{e}_{2m})$ by
\begin{equation}\label{6.1}
P(x) 
:=P(\{\textbf{e}_i\}_{i=1}^{2m},(x,x))
=\bigcap\limits_{i=1}^{2m}\overline{B}_{\textbf{e}_i}(x_i)
=\bigcap\limits_{i=1}^{m}\left(\overline{B}_{\textbf{e}_i}(x_i)\cap\overline{B}_{\textbf{e}_{m+i}}(x_{i})\right).
\end{equation}
Note that when $x_i=0$ for some $1\le i\le m$, $P(x)$ is exactly the origin point $O$. We define a function $\Phi_p: \mathbb{R}_*^{m} \to \mathbb{R}$ as follows:
\begin{equation}\label{6.2}
\Phi_p(x)=\left\{\begin{array}{l}
\frac{1}{p}\sum\limits_{i=1}^{2m}a_i (e^{p x_i}-1),\quad \text{if}\ p\neq0,\\
\sum\limits_{i=1}^{2m}a_ix_i,\quad\quad\quad\quad\quad   \text{if}\ p=0.
\end{array}\right.
\end{equation}

\subsection{The proof of Theorem \ref{Thm1.1} for $p\geq 0$}\label{sec:6.1}$\ $

In this subsection, we study an optimization problem with natural constraints. Its solution solves the discrete horospherical $p$-Minkowski problem for $p\ge0$ in the even case.

Let $p\ge0$ and $\mu=\sum_{i=1}^{2m}a_i\delta_{\textbf{e}_i}$ be an even discrete measure on $\mathbb{S}^n$. We consider the following optimization problem
\begin{align}\label{6.3}
\sup\left\{V(P(x)):\ x\in\mathbb{R}_*^{m}\ \text{and}\ \Phi_p(x)=1\right\}.
\end{align}

Since $M=\{x\in\mathbb{R}_*^{m}:\ \Phi_p(x)=1\}$ is compact, and the function $V(P(x))$ is continuous on $M$, there exists a point $z\in M$ such that
\begin{equation}\label{6.4}
	V(P(z))=\max\limits_{x\in M} V(P(x)).
\end{equation}
	
It follows from $V(P(z))>0$ and the origin-symmetry of $P(z)$ that $z$ lies the interior of $\mathbb{R}_*^{m}$. Using the Lagrange multiplier method and the variational formula in Lemma \ref{Lem4.6}, there exists a constant $\lambda$ such that
\begin{equation}\label{6.5}
\begin{split}
	0 &=\left.\frac{\partial}{\partial x_i}\right|_{x=z}\left[V(P(x))-\lambda\left(\Phi_p(x)-1\right)\right]\\
	&=S({P(z)},\textbf{e}_i)+S({P(z)},\textbf{e}_{m+i})-\lambda a_i e^{p z_i}-\lambda a_{m+i} e^{p z_{m+i}},\quad i=1,\ldots,m. 
\end{split}
\end{equation}
Using $a_{i}=a_{m+i},\ z_{i}=z_{m+i}$ and $S({P(z)},\textbf{e}_i)=S({P(z)},\textbf{e}_{m+i})$, we obtain
\begin{equation}\label{6.6}
	S({P(z)},\textbf{e}_i)=\lambda a_i e^{pz_i},\quad i=1,\ldots,m,
\end{equation}
and
\begin{equation}\label{6.7}
	\lambda=\frac{\sum S({P(z)},\textbf{e}_i)}{\sum a_i e^{p z_i}}=\frac{S(P(z))}{\sum a_i e^{p z_i}}>0.
\end{equation}
We can conclude that $S({P(z)},\textbf{e}_i)>0$ and then $u(P(z),\textbf{e}_i)=z_i$. Consequently,
\begin{equation}\label{6.8}
\begin{split}
	\mu &=\sum\limits_{i=1}^{2m}a_i\delta_{\textbf{e}_i}(\cdot)\\
	&=\frac{1}{\lambda}\sum\limits_{i=1}^{2m}e^{-p u(P(z),\textbf{e}_i)}S({P(z)},\textbf{e}_i)\delta_{\textbf{e}_i}(\cdot)\\
	&=\frac{1}{\lambda} S_p(P(z),\cdot).
\end{split}
\end{equation}
This completes the proof of Theorem \ref{Thm1.1} for $p\ge 0$.

\begin{rem}\label{Rem6.1}
	In particular, the case $p=0$ of Theorem \ref{Thm1.1} corresponds to the prescribed discrete horospherical surface area measure problem. Namely, there exists an origin-symmetric h-convex polytope $P$ that has precisely $\textbf{e}_1,\ldots,\textbf{e}_{2m}$ as its horospherical normals, and $a_1,\ldots,a_{2m}$ are multiples of corresponding areas of the facets of $P$.
\end{rem}

\begin{rem}\label{Rem6.2}
	For the case $p<0$, the corresponding optimization problem might be 
	\begin{equation}\label{6.9}
    \sup\left\{V(P(x)):\ x\in\mathbb{R}_*^{m}\ \text{and}\ \Phi_p(x)=1\right\}
	\end{equation}
    or
    \begin{equation}\label{6.10}
	\inf\left\{V(P(x)):\ x\in\mathbb{R}_*^{m}\ \text{and}\ \Phi_p(x)=1\right\}.
	\end{equation}
	However, these problems are not suitable for obtaining solutions to the discrete horospherical $p$-Minkowski problem for $p<0$ in the even case for the following reasons:
	\begin{enumerate}[(i)]
		\item Since the set $M=\{x\in\mathbb{R}_*^{m}:\ \Phi_p(x)=1\}$ is not bounded when $p<0$, the elements in $M$ may not attain the supremum in the problem (\ref{6.9}).
		
		\item For the infimum in the problem (\ref{6.10}), we can choose some values $\{a_1,\ldots,a_{2m}\}$ such that there exists $z\in M$ with $z_1=0$, then  $\inf_{x\in M}V(P(x))=V(P(z))=0$. In this case, $z$ is on the boundary of $\mathbb{R}_*^{m}$ and the variational argument fails.
	\end{enumerate}
\end{rem}

\subsection{The proof of Theorem \ref{Thm1.1} for $p<0$}\label{sec:6.2}$\ $

In this subsection, we study the discrete horospherical $p$-Minkowski problem for $p<0$ in the even case.

\begin{defn}
	A vector $u=(u_1,\ldots,u_{2m})\in\mathbb{R}_*^{2m}$ is called \textit{admissible} if the horospherical Wulff shape associated with $(\{\textbf{e}_i\}_{i=1}^{2m},u)$ satisfies
	\begin{align}\label{6.11}
	u(P(\{\textbf{e}_i\}_{i=1}^{2m},u),\textbf{e}_i)=u_i,\quad 1\le i\le 2m.
	\end{align}
\end{defn}

We need the following lemma in the study of the optimization problem.
\begin{lem}\label{Lem6.4}
	Let $p<0$ and $x\in\mathbb{R}_*^{m}$ satisfying $V(P(x))=V_0$. Then there exists an admissible $\tilde{x}\in\mathbb{R}_*^{m}$ such that $V(P(\tilde{x}))=V_0$ and $\Phi_p(\tilde{x})\le\Phi_p(x)$.
\end{lem}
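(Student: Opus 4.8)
The plan is to produce $\tilde{x}$ by \emph{reducing} each coordinate of $x$ to the true value of the support function of $P(x)$ in the corresponding direction. Concretely, I would set
\[
\tilde{x}_i := u(P(x),\textbf{e}_i),\qquad 1\le i\le 2m,
\]
and then verify that this vector, viewed through the convention $\tilde{x}_{m+i}=\tilde{x}_i$ as an element of $\mathbb{R}_*^{m}$, has all the required properties. The motivation is that the defining horo-balls $\overline{B}_{\textbf{e}_i}(x_i)$ in (\ref{6.1}) may be redundant: by the Wulff-shape inequality (\ref{4.3}) one always has $u(P(x),\textbf{e}_i)\le x_i$, so passing to $\tilde{x}$ only shrinks the prescribed support values while leaving the polytope itself unchanged.

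First I would confirm that $\tilde{x}\in\mathbb{R}_*^{m}$. Nonnegativity follows from $O\in P(x)$: when all $x_i>0$ the origin lies in the interior of $P(x)$, and when some $x_i=0$ the polytope degenerates to $\{O\}$; in either case $\tilde{x}_i=u(P(x),\textbf{e}_i)\ge f_{\textbf{e}_i}(O)=0$ by (\ref{2.4}). The symmetry constraint $\tilde{x}_{m+i}=\tilde{x}_i$ follows because $P(x)$ is origin-symmetric and $\textbf{e}_{m+i}=-\textbf{e}_i$: for an origin-symmetric body $K$ the reflection $(\textbf{x},x_{n+2})\mapsto(-\textbf{x},x_{n+2})$ preserves $K$, and a short computation from (\ref{2.4}) shows $u(K,-\textbf{e})=u(K,\textbf{e})$.

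Next I would show $P(\tilde{x})=P(x)$, from which the volume statement $V(P(\tilde{x}))=V_0$ and the admissibility are immediate. The inclusion $P(\tilde{x})\subset P(x)$ is clear, since $\tilde{x}_i\le x_i$ forces $\overline{B}_{\textbf{e}_i}(\tilde{x}_i)\subset\overline{B}_{\textbf{e}_i}(x_i)$ by (\ref{2.2}). For the reverse inclusion I use the definition of the support function: $P(x)\subset\overline{B}_{\textbf{e}_i}(u(P(x),\textbf{e}_i))=\overline{B}_{\textbf{e}_i}(\tilde{x}_i)$ for every $i$, hence $P(x)\subset\bigcap_{i=1}^{2m}\overline{B}_{\textbf{e}_i}(\tilde{x}_i)=P(\tilde{x})$. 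Equality of the two polytopes then gives $u(P(\tilde{x}),\textbf{e}_i)=u(P(x),\textbf{e}_i)=\tilde{x}_i$, which is precisely the admissibility condition (\ref{6.11}).

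Finally, the inequality $\Phi_p(\tilde{x})\le\Phi_p(x)$ reduces to monotonicity. For $p\ne0$ the scalar function $t\mapsto\frac{1}{p}(e^{pt}-1)$ has derivative $e^{pt}>0$ and is therefore increasing; since $\tilde{x}_i\le x_i$ and each $a_i>0$, summing over $i$ yields $\Phi_p(\tilde{x})\le\Phi_p(x)$ directly from (\ref{6.2}). I do not expect a genuine obstacle here, as the whole argument is a redundancy-elimination trick; the one step that deserves care is the preservation of the symmetric structure, namely verifying $\tilde{x}_{m+i}=\tilde{x}_i$ so that $\tilde{x}$ truly lies in $\mathbb{R}_*^{m}$ and $P(\tilde{x})$ remains origin-symmetric, since otherwise the reduced vector would not be an admissible competitor of the required form.
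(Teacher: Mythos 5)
Your proposal is correct and follows essentially the same route as the paper: define $\tilde{x}_i=u(P(x),\textbf{e}_i)$, use the Wulff-shape inequality to get $\tilde{x}_i\le x_i$ (hence $\Phi_p(\tilde{x})\le\Phi_p(x)$ by monotonicity of $t\mapsto\frac{1}{p}(e^{pt}-1)$), and establish $P(\tilde{x})=P(x)$ by double inclusion, which gives both the volume equality and admissibility. Your explicit verification that origin-symmetry forces $\tilde{x}_{m+i}=\tilde{x}_i$ (so that $\tilde{x}$ genuinely lies in $\mathbb{R}_*^{m}$) is a point the paper leaves implicit, but it is not a different argument.
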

\begin{proof}
	Let $x\in\mathbb{R}_*^{m}$, and denote
	\begin{equation*}
	\Tilde{x}_i=u(P(x),\textbf{e}_i),\quad\Tilde{x}=(\tilde{x}_i)_{1\le i\le m}\in \mathbb{R}_*^{m}.
	\end{equation*}
	Using (\ref{4.3}), we obtain
	\begin{align}\label{6.12}
	\Tilde{x}_i=u(P(x),\textbf{e}_i)\le x_i,\quad 1\le i\le m.
	\end{align}
	It follows that
	\begin{equation*}
	\Phi_p(\Tilde{x})
	=\frac{1}{p}\sum\limits_{i=1}^{2m}a_i (e^{p\tilde{x}_i}-1)
	\le \frac{1}{p}\sum\limits_{i=1}^{2m}a_i (e^{p x_i}-1)
	=\Phi_p(x).
	\end{equation*}
	
	By (\ref{2.3}), we have $P(x)\subset \overline{B}_{\textbf{e}_i}(\tilde{x}_i)$ for all $1\le i\le m$. Then
	\begin{align}\label{6.13}
	P(x)\subset \bigcap_{i=1}^{2m} \overline{B}_{\textbf{e}_i}(\tilde{x}_i)=P(\tilde{x}).
	\end{align}
	On the other hand, it follows from (\ref{6.12}) that $P(\tilde{x})\subset P(x)$. Combining this with (\ref{6.13}), we have
	\begin{align*}
	P(\tilde{x})=P(x),\quad  
	V(P(\tilde{x}))=V_0.
	\end{align*}
	It is clear that
	\begin{align*}
	u(P(\tilde{x}),\textbf{e}_i)=\tilde{x}_i,\quad  1\le i\le m.
	\end{align*}
	Hence $\tilde{x}$ is admissible. This completes the proof of Lemma \ref{Lem6.4}. 
\end{proof}

Let $p<0$ and $\mu=\sum_{i=1}^{2m}a_i\delta_{\textbf{e}_i}$ be an even discrete measure on $\mathbb{S}^n$. Let us consider the following optimization problem
\begin{align}\label{6.14}
\inf\left\{\Phi_p(x):\ x\in\mathbb{R}_*^{m}\ \text{and}\ V(P(x))=V_0 \right\},
\end{align}
where $V_0$ is a given positive constant. 

Note that the function $\Phi_p(x)$ defined in (\ref{6.2}) is non-negative. Let us take a minimizing sequence $\{x^{(k)}\}\subset\mathbb{R}_*^{m}$ for the optimization problem (\ref{6.14}), i.e.
\begin{equation}\label{6.15}
    V(P(x^{(k)}))=V_0,
\end{equation}
and
\begin{equation*}
    \lim\limits_{k\to\infty}\Phi_p(x^{(k)})=\inf\left\{\Phi_p(x):\ x\in\mathbb{R}_*^{m}\ \text{and}\  V(P(x))=V_0\right\}.
\end{equation*}
By Lemma \ref{Lem6.4}, we may assume that each vector in the sequence $\{x^{(k)}\}$ is admissible. Combining this with Lemma \ref{Lem5.1}, we conclude that $\{x^{(k)}\}$ is bounded. On the other hand, it follows from (\ref{6.15}) and the origin-symmetry of $P(x^{(k)})$ that $\{x_i^{(k)}\}$ has a positive uniform lower bound for all $i,k$.

Therefore, by use of Theorem \ref{Thm3.8} and Corollary \ref{Cor3.6}, there exists a subsequence of $P(x^{(k)})$ (still denoted by $P(x^{(k)})$) that converges to an origin-symmetric h-convex polytope $P_0\in\mathcal{P}^h(\textbf{e}_1,\ldots,\textbf{e}_{2m})$ satisfying $V(P_0)=V_0$.
Let $x^{(0)}=(u(P_0,\textbf{e}_i))_{1\le i\le m}\in\mathbb{R}_*^{m}$. Then $P(x^{(0)})=P_0$ and $V(P(x^{(0)}))=V_0$. Since $P(x^{(k)})$ converges to $P_0$ in Hausdorff metric, we get
\begin{equation}\label{6.16}
    x^{(k)}\to x^{(0)}\quad \text{and}\quad \Phi_p(x^{(0)})=\inf\left\{\Phi_p(x):\ V(P(x))=V_0\right\}.
\end{equation}
Due to the origin-symmetry of $P(x^{(0)})$ and $V(P(x^{(0)}))=V_0>0$, we have that $x^{(0)}$ is in the interior of $\mathbb{R}_*^{m}$. By the Lagrange multiplier method and the variational formula in Lemma \ref{Lem4.6}, there exists a constant $\lambda$ such that
\begin{equation*}
\begin{split}
    0 &=\left.\frac{\partial}{\partial x_i}\right|_{x=x^{(0)}}\left[\Phi_p(x)-\lambda (V(P(x))-V_0) \right]\\
    &=a_i e^{p x_i^{(0)}}+a_{m+i} e^{p x_{m+i}^{(0)}}-\lambda S({P(x^{(0)})},\textbf{e}_i)-\lambda S({P(x^{(0)})},\textbf{e}_{m+i}),\quad i=1,\ldots,m.
\end{split}
\end{equation*}
Using $a_i=a_{m+i},\ x_i^{(0)}=x_{m+i}^{(0)}$ and $S({P(x^{(0)})},\textbf{e}_i)=S({P(x^{(0)})},\textbf{e}_{m+i})$, we have
\begin{equation}\label{6.17}
    a_i e^{p x_i^{(0)}}=\lambda S({P(x^{(0)})},\textbf{e}_i),\quad i=1,\ldots,m,
\end{equation}
and
\begin{equation*}
    \lambda=\frac{\sum a_i e^{p x_i^{(0)}}}{S(P(x^{(0)}))}>0.
\end{equation*}
Therefore, $S({P(x^{(0)})},\textbf{e}_i)>0$ and then $u(P(x^{(0)}),\textbf{e}_i)=x_i^{(0)}$. This together with (\ref{6.17}) implies
\begin{equation*}
\begin{aligned}
    \mu =&\sum\limits_{i=1}^{2m}a_i\delta_{\textbf{e}_i}(\cdot)\\
    =&\lambda\sum\limits_{i=1}^{2m}e^{-p u(P(x^{(0)}),\textbf{e}_i)}S(P(x^{(0)}),\textbf{e}_i)\delta_{\textbf{e}_i}(\cdot)\\
    =&\lambda S_p(P(x^{(0)}),\cdot).
\end{aligned}
\end{equation*}
Then $P(x^{(0)})$ is the desired solution. We complete the proof of Theorem \ref{Thm1.1} for $p<0$.

\subsection{Further discussion}$\ $

In Theorem \ref{Thm1.1}, we assumed that the prescribed measure $\mu$ is even. However, if $\mu$ is not even, then there may exist necessary conditions on $\mu$ for the existence of solutions to Problem \ref{Problem 1.2}. For the smooth case, one can refer to the Kazdan-Warner type obstruction of the horosphercial Minkowski problem in \cite{LX}. For the discrete case, we provide the following example.

\begin{ex}[Problem \ref{Problem 1.1} when $N=2$]
For any unit vectors $\textbf{e}_1, \textbf{e}_2\in\mathbb{S}^n$ and positive numbers $x_1, x_2$, denote by $P$ the horospherical Wulff shape associated with $(\{\textbf{e}_1,\textbf{e}_2\},(x_1,x_2))$, i.e., $P=\bar{B}_{\textbf{e}_1}(x_1)\cap \bar{B}_{\textbf{e}_2}(x_2)$. A direct calculation shows that
\begin{align}\label{6.18}
    S(P,\cdot)=\omega_n\left(e^{x_1+x_2}-1\right)^{\frac{n}{2}}\left(\delta_{\textbf{e}_1}(\cdot)+\delta_{\textbf{e}_2}(\cdot)\right).
\end{align}

Therefore, given a discrete measure $\mu=a_1\delta_{\textbf{e}_1}+a_2\delta_{\textbf{e}_2}$, there exists a solution to Problem \ref{Problem 1.1} for $\mu$ if and only if $a_1=a_2$. Moreover, when $a_1=a_2$, it follows from (\ref{6.18}) that this problem has infinitely many solutions.
\end{ex}

Finally, we restate Problem \ref{Problem 1.2} in the case that $\mu$ is not even. This problem is of great interest in the discrete horospherical $p$-Brunn-Minkowski theory.
\begin{prob}\label{Problem 6.1}
Suppose that a discrete measure $\mu$ on $\mathbb{S}^n$ is not even. Find necessary and sufficient conditions on $\mu$ so that there exists a h-convex polytope $P$ in $\mathbb{H}^{n+1}$ whose horospherical $p$-surface area measure $S_p(P,\cdot)$ is a multiple of the given measure $\mu$.
\end{prob}

\begin{bibdiv}
\begin{biblist}

\bib{Ale42}{article}{
   author={Alexandroff, A.},
   title={Existence and uniqueness of a convex surface with a given integral
   curvature},
   journal={C. R. (Doklady) Acad. Sci. URSS (N.S.)},
   volume={35},
   date={1942},
   pages={131--134},
}

\bib{And21}{article}{
   author={Andrews, Ben},
   author={Chen, Xuzhong},
   author={Wei, Yong},
   title={Volume preserving flow and Alexandrov-Fenchel type inequalities in
   hyperbolic space},
   journal={J. Eur. Math. Soc. (JEMS)},
   volume={23},
   date={2021},
   number={7},
   pages={2467--2509},
}

\bib{Bor22}{article}{
    author={B\"{o}r\"{o}czky, K\'{a}roly J.},
	title={The Logarithmic Minkowski conjecture and the $L_p$-Minkowski Problem},
	year={2022},
	eprint={arXiv:2210.00194v2},
	archivePrefix={arXiv},
	primaryClass={math.AP}
}

\bib{Bor16}{article}{
   author={B\"{o}r\"{o}czky, K\'{a}roly J.},
   author={Heged\H{u}s, P\'{a}l},
   author={Zhu, Guangxian},
   title={On the discrete logarithmic Minkowski problem},
   journal={Int. Math. Res. Not. IMRN},
   date={2016},
   number={6},
   pages={1807--1838},
}

\bib{BLY13}{article}{
   author={B\"{o}r\"{o}czky, K\'{a}roly J.},
   author={Lutwak, Erwin},
   author={Yang, Deane},
   author={Zhang, Gaoyong},
   title={The logarithmic Minkowski problem},
   journal={J. Amer. Math. Soc.},
   volume={26},
   date={2013},
   number={3},
   pages={831--852},
}

\bib{BT17}{article}{
   author={B\"{o}r\"{o}czky, K\'{a}roly J.},
   author={Trinh, Hai T.},
   title={The planar $L_p$-Minkowski problem for $0<p<1$},
   journal={Adv. in Appl. Math.},
   volume={87},
   date={2017},
   pages={58--81},
}

\bib{Caf90}{article}{
   author={Caffarelli, Luis A.},
   title={Interior $W^{2,p}$ estimates for solutions of the Monge-Amp\`ere
   equation},
   journal={Ann. of Math. (2)},
   volume={131},
   date={1990},
   number={1},
   pages={135--150},
}

\bib{Chen06}{article}{
   author={Chen, Wenxiong},
   title={$L_p$ Minkowski problem with not necessarily positive data},
   journal={Adv. Math.},
   volume={201},
   date={2006},
   number={1},
   pages={77--89},
}

\bib{CY76}{article}{
   author={Cheng, Shiu Yuen},
   author={Yau, Shing Tung},
   title={On the regularity of the solution of the $n$-dimensional Minkowski
   problem},
   journal={Comm. Pure Appl. Math.},
   volume={29},
   date={1976},
   number={5},
   pages={495--516},
}

\bib{CW06}{article}{
   author={Chou, Kai-Seng},
   author={Wang, Xu-Jia},
   title={The $L_p$-Minkowski problem and the Minkowski problem in
   centroaffine geometry},
   journal={Adv. Math.},
   volume={205},
   date={2006},
   number={1},
   pages={33--83},
}

\bib{esp09}{article}{
   author={Espinar, Jos\'{e} M.},
   author={G\'{a}lvez, Jos\'{e} A.},
   author={Mira, Pablo},
   title={Hypersurfaces in $\mathbb{H}^{n+1}$ and conformally invariant
   equations: the generalized Christoffel and Nirenberg problems},
   journal={J. Eur. Math. Soc. (JEMS)},
   volume={11},
   date={2009},
   number={4},
   pages={903--939},
}

\bib{FJ38}{article}{
	author={Fenchel, W.},
	author={Jessen, B.},
	title={Mengenfunktionen und konvexe K\"{o}rper},
	journal={Danske Vid. Selskab. Mat.-fys. Medd.},
	volume={16},
	date={1938},
	pages={1--31}
}

\bib{GST13}{article}{
   author={Gallego, E.},
   author={Solanes, G.},
   author={Teufel, E.},
   title={Linear combinations of hypersurfaces in hyperbolic space},
   journal={Monatsh. Math.},
   volume={169},
   date={2013},
   number={3-4},
   pages={329--354},
}

\bib{HLYZ}{article}{
   author={Hug, Daniel},
   author={Lutwak, Erwin},
   author={Yang, Deane},
   author={Zhang, Gaoyong},
   title={On the $L_p$ Minkowski problem for polytopes},
   journal={Discrete Comput. Geom.},
   volume={33},
   date={2005},
   number={4},
   pages={699--715},
}

\bib{JLZ16}{article}{
   author={Jian, Huaiyu},
   author={Lu, Jian},
   author={Zhu, Guangxian},
   title={Mirror symmetric solutions to the centro-affine Minkowski problem},
   journal={Calc. Var. Partial Differential Equations},
   volume={55},
   date={2016},
   number={2},
   pages={Art. 41, 22},
}

\bib{koh91}{article}{
   author={Kohlmann, Peter},
   title={Curvature measures and Steiner formulae in space forms},
   journal={Geom. Dedicata},
   volume={40},
   date={1991},
   number={2},
   pages={191--211},
}

\bib{lewy38}{article}{
   author={Lewy, Hans},
   title={On differential geometry in the large. I. Minkowski's problem},
   journal={Trans. Amer. Math. Soc.},
   volume={43},
   date={1938},
   number={2},
   pages={258--270},
}

\bib{LW}{article}{
   author={Li, Haizhong},
   author={Wan, Yao},
   title={The Christoffel problem in the hyperbolic plane},
   journal={Adv. in Appl. Math.},
   volume={150},
   date={2023},
   pages={Paper No. 102557, 17},
}

\bib{LX}{article}{
    author={Li, Haizhong},
    author={Xu, Botong},
	title={Hyperbolic $p$-sum and horospherical $p$-Brunn-Minkowski theory in hyperbolic space},
    year={2022},
    eprint={arXiv:2211.06875},
	archivePrefix={arXiv},
	primaryClass={math.MG}
}

\bib{Lop}{book}{
   author={L\'{o}pez, Rafael},
   title={Constant mean curvature surfaces with boundary},
   series={Springer Monographs in Mathematics},
   publisher={Springer, Heidelberg},
   date={2013},
   pages={xiv+292},
}

\bib{Lu13}{article}{
   author={Lu, Jian},
   author={Wang, Xu-Jia},
   title={Rotationally symmetric solutions to the $L_p$-Minkowski problem},
   journal={J. Differential Equations},
   volume={254},
   date={2013},
   number={3},
   pages={983--1005},
}

\bib{lut93}{article}{
   author={Lutwak, Erwin},
   title={The Brunn-Minkowski-Firey theory. I. Mixed volumes and the
   Minkowski problem},
   journal={J. Differential Geom.},
   volume={38},
   date={1993},
   number={1},
   pages={131--150},
}

\bib{lut04}{article}{
   author={Lutwak, Erwin},
   author={Yang, Deane},
   author={Zhang, Gaoyong},
   title={On the $L_p$-Minkowski problem},
   journal={Trans. Amer. Math. Soc.},
   volume={356},
   date={2004},
   number={11},
   pages={4359--4370},
}

\bib{Min1897}{article}{
	author={Minkowski, Hermann},
	title={Allgemeine Lehrs{\"a}tze {\"u}ber die konvexen Polyeder},
	journal={Nachr. Ges. Wiss. Gottingen, Math.-Phys. KL, },
    pages={198--219},
    year={1897}
}

\bib{Min1903}{article}{
	author={Minkowski, Hermann},
	title={Volumen und Oberfl\"{a}che},
	journal={Math. Ann.},
	volume={57},
	date={1903},
	number={4},
	pages={447--495}
}

\bib{Nir53}{article}{
   author={Nirenberg, Louis},
   title={The Weyl and Minkowski problems in differential geometry in the
   large},
   journal={Comm. Pure Appl. Math.},
   volume={6},
   date={1953},
   pages={337--394},
}

\bib{Pog78}{book}{
	author={Pogorelov, A. V.},
	title={The Minkowski multidimensional problem},
	publisher={V.H.Winsion \& Sons},
	place={Washington, D.C.},
    year={1978}
}

\bib{sch14}{book}{
   author={Schneider, Rolf},
   title={Convex bodies: the Brunn-Minkowski theory},
   series={Encyclopedia of Mathematics and its Applications},
   volume={151},
   edition={Second expanded edition},
   publisher={Cambridge University Press, Cambridge},
   date={2014},
   pages={xxii+736},
}

\bib{Ver19}{article}{
   author={Veronelli, Giona},
   title={Boundary structure of convex sets in the hyperbolic space},
   journal={Monatsh. Math.},
   volume={188},
   date={2019},
   number={3},
   pages={567--586},
}

\bib{zhu14}{article}{
   author={Zhu, Guangxian},
   title={The logarithmic Minkowski problem for polytopes},
   journal={Adv. Math.},
   volume={262},
   date={2014},
   pages={909--931},
}

\bib{zhu1501}{article}{
   author={Zhu, Guangxian},
   title={The $L_p$ Minkowski problem for polytopes for $0<p<1$},
   journal={J. Funct. Anal.},
   volume={269},
   date={2015},
   number={4},
   pages={1070--1094},
}

\bib{zhu1502}{article}{
   author={Zhu, Guangxian},
   title={The centro-affine Minkowski problem for polytopes},
   journal={J. Differential Geom.},
   volume={101},
   date={2015},
   number={1},
   pages={159--174},
}

\bib{zhu17}{article}{
   author={Zhu, Guangxian},
   title={The $L_p$ Minkowski problem for polytopes for $p<0$},
   journal={Indiana Univ. Math. J.},
   volume={66},
   date={2017},
   number={4},
   pages={1333--1350},
}

\end{biblist}
\end{bibdiv}

\end{document}